\journal{arxiv}
\theoremstyle{definition}
\newtheorem{teorema}{Theorem}[section]
\newtheorem{lema}[teorema]{Lemma}
\newtheorem{corolario}[teorema]{Corollary}
\newtheorem{proposicao}[teorema]{Proposition}
\newtheorem{definicao}[teorema]{Definition}
\theoremstyle{remark}
\newtheorem{observacao}[teorema]{Remark}
\numberwithin{equation}{section}
\newcommand{\R}{\mathbb{R}}
\newcommand{\rhoi}[1]{\rho_{_{#1}}}
\newcommand{\n}[2]{\|#1\|_{_{#2}}}
\newcommand{\intw}{\int_0^T\int_{\omega}}	
\newcommand{\into}{\int_0^1}
\newcommand{\nn}[1]{\|#1\|}
\newcommand{\intq}{\int_0^T\int_0^1}
\newcommand{\lnum}{\let\veqno\leqno}
\newcommand{\dom}{Q}
\newcommand{\domw}{\omega_T}
\newcommand{\rz}{\rhoi{1}}
\newcommand{\G}[1]{\int_0^T\int_0^1e^{-2sA}\left(s\lambda^2\zeta x^\alpha |#1_x|^2+s^3\lambda^{4}\zeta^{2} |#1|^2 \right) \dd x \dd t  }
\newcommand{\dd}{\mathop{}\!d}
\newcommand{\bu}{\bar{u}}
\newcommand{\bh}{\bar{h}}
\renewcommand{\@todonotes@drawMarginNoteWithLine}{%
	\begin{tikzpicture}[remember picture, overlay, baseline=-0.75ex]%
	\node [coordinate] (inText) {};%
	\end{tikzpicture}%
	\marginnote[{
		\@todonotes@drawMarginNote%
		\@todonotes@drawLineToLeftMargin%
	}]{
		\@todonotes@drawMarginNote%
		\@todonotes@drawLineToRightMargin%
	}%
}
\begin{document}

\begin{frontmatter}


\title{\textsc{Carleman inequality for a class of super strong degenerate  parabolic operators and applications.}}

\author[UFCG]{B. S. V. Ara\'ujo}
\ead{bsergio@mat.ufcg.edu.br}

\author[RCN]{R. Demarque\corref{mycorrespondingauthor}}
\cortext[mycorrespondingauthor]{Corresponding author}
\ead{reginaldo@id.uff.br}

\author[GAN]{L. Viana}
\ead{luizviana@id.uff.br}

\address[UFCG]{Unidade Acadêmica de Matemática, Universidade Federal de Campina Grande, Campina Grande, PB, Brazil}
\address[RCN]{Departamento de Ciências da Natureza,
	Universidade Federal Fluminense,
	Rio das Ostras, RJ, Brazil}
\address[GAN]{Departamento de Análise,
	Universidade Federal Fluminense,
	Niter\'{o}i, RJ, Brazil}

\begin{abstract}
In this paper, we present a new Carleman estimate for the adjoint equations associated to a class of super strong degenerate parabolic linear problems. Our approach considers a standard geometric imposition on the control domain, which can not be removed in the general situations. Additionally, we also apply the aformentioned main inequality in order to investigate the null controllability of two nonlinear parabolic systems. The first application is concerned a global null controllability result obtained for some semilinear equations, relying on a fixed point argument. In the second one, a local null controllability for some equations with nonlocal terms is also achieved, by using an inverse function theorem.   
\end{abstract}

\begin{keyword}
degenerate parabolic equations, Carleman estimates, linear systems in control theory, nonlinear systems in control theory.
\MSC[2020]{35K65, 93B05, 93C05, 93C10.}
\end{keyword}

\end{frontmatter}


\section{Introduction}\label{introd}

   In this work we derive a new Carleman estimate for the linear super strong degenerate problem
   \begin{equation}\label{prob0}
       \begin{cases}
          u_t-(x^\alpha u_x)_x+x^{\alpha/2}b_1(x,t)u_x+b_0(x,t)u  =  f1_{\omega} &\text{in }    Q,\\ u(1,t)=0  \text{ and }  (x^\alpha u_x)(0,t)=0  & \text{in }  (0,T),\\ u(x,0)= u_0(x)  & \text{in } (0,1),
       \end{cases}
   \end{equation}
      where $Q=(0,1)\times(0,T)$, $\omega\subset(0,1)$ is a non-empty open interval and $1_\omega$ is its associated characteristic function and $\alpha\geq2$. Also,  $b_0\in L^\infty(Q)$, $h\in L^2(\omega\times (0,T))$,  $u_0\in L^2(0,1)$, $b_1\in L^\infty(Q)$ and satisfy:
   \begin{equation}\label{hipfunc1}
       (x^{\alpha/2}b_1(x,t))_x\in L^\infty(Q).
   \end{equation}
And, we also consider a geometrical condition on the control domain
   \begin{equation}
\label{geohip}\exists d>0; \ (0,d)\subset\omega.
\end{equation}
 
 As we will see further, the problem \eqref{prob0} is controllable at any time $T>0$, according to the following specification: 
\begin{definicao} We say that \eqref{prob0}, is {\it null controllable} if, for any $u_0\in L^2(0,1)$, there exists $h\in L^2(\omega\times (0,T))$ such that the solution $u$ of \eqref{prob0} satisfy
     \begin{equation}\label{null}u(x,T)=0\ \ \ \mbox{in} \ \ (0,1).\end{equation}\end{definicao}

The null controllability of problem \eqref{prob0} for $\alpha\in (0,2)$ is well understood, see \cite{alabau2006carleman,cannarsa2016global} and references therein. Following the terminology adopted in these works, we say that \eqref{prob0} is {\it weakly degenerate} if $\alpha\in(0,1)$ and is {\it strongly degenerate} if $\alpha\in(1,2)$. Despite there are many works for the case $\alpha\in(0,2)$, little has been done for the {\it super strong degenerate case}, i.e. when $\alpha\geq 2$, although this is a very relevant case of the degenerate problem. Indeed, when $\alpha=2$, the Black-Scholes equation can be obtained from \eqref{prob0} and this equation has a key role in several financial problems.

Regarding the global null controllability of \eqref{prob0}, the fact is that this problem is not, in general, null controllable for $\alpha\geq2$. As pointed out in \cite{cannarsa2016global}, a suitable change of variables transform the problem \eqref{prob0} into a non-degenerate problem in an unbounded domain, which fails to be null controllable in general, as proved in \cite{micu2001lack}. However, if the new control domain $\tilde{\omega}$ has bounded complement, it can be controlled, as proved in  \cite{cabanillas2001null,cannarsa2004lebesgue}.


Because of that, in \cite{cannarsa2004persistent} was introduced  a weaker kind of null controllability, {\it regional null controllability} for this problem. It means that for any $u_0\in L^2(0,T)$, $\omega=(a,b)\subset(0,1)$ and $\delta\in (0,b-a)$, there exists a control $f\in L^2(Q)$ such that the solution $u$ of \eqref{prob0} satisfies 
\begin{equation}
\label{nullreg}u(x,T)=0 \ \ \forall x\in (a+\delta,1). 
\end{equation}
However, in order to establish a global null controllability result for \eqref{prob0} with $\alpha=2$, the authors in  \cite{araruna2018stackelberg} came up with the new geometrical condition \eqref{geohip}. In this work, considering the same geometrical condition, we were able to extend that result for $\alpha >2$.

A  significant number of papers on null controllability of parabolic degenerate equations follows a standard approach based on the Hilbert Uniqueness Method (HUM). It goes through obtaining a Carleman estimate that leads to an observabillity inequality. This way, the  null controllability property can be deduced from the observabillity inequality. The particularity of \cite{araruna2018stackelberg} and \cite{cannarsa2004persistent} is that the authors applied a change of variables  to transform the system \eqref{prob0} into a non-degenerate problem in unbounded domains. There, a Carleman estimate is obtained for this non-degenerate system. 

Although the approach of transforming the degenerate problem into a non-degenerate one, in an unbounded domain, works fine for linear problems, this procedure can meet  difficulties to deal with some related problems. Indeed, when we work with some autonomous semilinear problems, for example, this change of variable leads it to a nonautonomous semilinear problem. And, if we work with a certain nonlocal problems, it is lead to an even more complicated one. In this work we present a Carleman estimate for \eqref{prob0}, without passing by this change of variables method.  To our best knowledge, this estimate and some consequences presented in the sequel mean some novelties for the super strong degenerate case.
 


The second part of the introduction is all about the presentation of our main resuts.

\subsection*{Statement of the Results}


First of all, let us consider the adjoint system associated to \eqref{prob0}:
\begin{equation} \label{prob0'}
\begin{cases}
v_t+(x^\alpha v_x)_x+(x^{\alpha/2}b_1v)_x-b_0(x,t)v  =  h  & \text{in }  Q,\\
v(1,t)=0 \ \  \text{and} \ \  (x^\alpha v_x)(0,t)=0  &  \text{in }   (0,T),\\ 
v(x,T)= v_T(x)   & \text{in }   (0,1),  
\end{cases}
\end{equation}
where $h\in L^2(Q)$ and $v_T\in L^2(0,1)$.

Now, for $\lambda>0$, let us introduce some weight functions given by $\theta$, $p_0$ and $\sigma_0$ with
\begin{multline}\label{functions}
    \theta(t):=\frac{1}{(t(T-t))^4},\  \eta(x):=-x^2/2, \   \xi(x,t)=\theta(t)e^{\lambda(2|\eta|_\infty+\eta(x))} \\  \text{and} \ \sigma(x,t):=\theta(t)e^{4\lambda|\eta|_\infty}-\xi(x,t).
\end{multline}

The assumption \eqref{geohip} and the weight function $\eta$ are the key points that allow us to build the following Carleman estimate:

\begin{teorema}\label{theo1}
Assume \eqref{hipfunc1} and \eqref{geohip}. There exists positive constants $C$, $s_0$ and $\lambda_0$, depending only on $\omega$, $\|b_0\|_\infty$, $T$, $d$ and $\alpha$ such that, for any $s\geq s_0$, any $\lambda\geq\lambda_0$ and any solution $v$ to (\ref{prob0'}), one has:
		\begin{multline}\label{carleman1}
		\iint_Qe^{-2s\sigma}\left[s^{-1}\lambda^{-1}\xi^{-1}(|v_t|^2+|(x^\alpha v_x)_x|^2)+s\lambda^2\xi x^\alpha |v_x|^2+s^3\lambda^4\xi^3|v|^2\right]\,dx\,dt  \\ 
		\leq  C\left[\|e^{-s\sigma}h\|^2+s^3\lambda^4\iint_{\omega _T}e^{-2s\sigma}\xi^3|v|^2\,dx\,dt\right], 
	\end{multline}
	where $\omega _T := \omega \times (0,T)$.
\end{teorema}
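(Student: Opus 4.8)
The plan is to follow the classical strategy for degenerate parabolic Carleman estimates, adapted to the super strong case $\alpha \geq 2$ and exploiting the geometric condition \eqref{geohip}. First I would reduce to the case $b_0 \equiv 0$ and $b_1 \equiv 0$: the zero-order term $b_0 v$ is absorbed into the right-hand side since $\|e^{-s\sigma} b_0 v\|^2 \leq \|b_0\|_\infty^2 \|e^{-s\sigma} v\|^2$, which for $s$ large is dominated by the $s^3\lambda^4 \xi^3 |v|^2$ term on the left; the first-order term $(x^{\alpha/2} b_1 v)_x = x^{\alpha/2} b_1 v_x + (x^{\alpha/2} b_1)_x v$ is likewise absorbed using \eqref{hipfunc1} and interpolation between the $x^\alpha |v_x|^2$ and $\xi^2 |v|^2$ terms (the factor $x^{\alpha/2}$ pairs precisely with $x^\alpha |v_x|^2$ via Cauchy--Schwarz with weight $x^\alpha$). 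So it suffices to prove the estimate for the principal operator $Pv := v_t + (x^\alpha v_x)_x$.

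Next I would perform the standard conjugation: set $w := e^{-s\sigma} v$ (so $w$ vanishes at $t=0,T$ because of the blow-up of $\theta$, and satisfies the boundary conditions $w(1,t)=0$, $(x^\alpha w_x)(0,t)=0$), and compute $e^{-s\sigma} P(e^{s\sigma} w) = P_1 w + P_2 w + R w$, splitting into formally self-adjoint part $P_1$, skew-adjoint part $P_2$, and a remainder $R$ collected on the right. Expanding $\|P_1 w + P_2 w\|^2_{L^2(Q)} = \|P_1 w\|^2 + \|P_2 w\|^2 + 2(P_1 w, P_2 w)$, the heart of the matter is computing the cross term $2(P_1 w, P_2 w)$ through repeated integration by parts in $x$ and $t$. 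This produces a sum of interior terms — the dominant ones being $\int_0^T\int_0^1 s\lambda^2 \xi x^\alpha |w_x|^2$ and $\int_0^T\int_0^1 s^3\lambda^4 \xi^3 |w|^2$ with good signs — plus boundary terms at $x=0$ and $x=1$ and at $t=0,T$. Here the choice $\eta(x) = -x^2/2$ matters: the degeneracy weight $x^\alpha$ against $\eta_x = -x$ and $\eta_{xx} = -1$ must combine so that the coefficient of $|w_x|^2$ stays positive and the lower-order commutator terms (which for $\alpha \geq 2$ involve $x^{\alpha-2}$, bounded on $(0,1)$) can be controlled by choosing $\lambda$ then $s$ large. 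The condition $\alpha \geq 2$ is exactly what keeps $x^{\alpha-2}$ bounded near $x=0$, and the Hardy-type inequality available in this regime handles the terms without the degeneracy factor.

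The main obstacle I anticipate is twofold. First, the boundary terms at $x=0$: unlike the non-degenerate case one cannot simply use a cutoff, and one must verify that the flux condition $(x^\alpha w_x)(0,t)=0$ together with the behavior of $\xi$ and $\eta$ near $0$ forces all boundary contributions at $x=0$ to vanish or have the right sign — this is delicate precisely because $\alpha \geq 2$ is the borderline/super-critical case where the usual weighted trace arguments break down, and it is here that \eqref{geohip} is essential (the weight must not degenerate in the "bad" direction on a neighborhood of $0$ contained in $\omega$). Second, after obtaining the estimate with the integral over all of $Q$ on the right replaced by an integral over $\omega_T$ only in the lower-order term, one must perform the standard cutoff/localization argument: introduce $\chi \in C^\infty_c(\omega)$ with $\chi \equiv 1$ on a subinterval, apply the energy estimate to $\chi w$ (or $\chi v$), and absorb the resulting local terms, using \eqref{geohip} so that the region $(0,d)$ where the degeneracy is strongest lies inside $\omega$ and hence is already "observed." Once the $w$-estimate is established, I would undo the substitution $w = e^{-s\sigma} v$, recover the $v_t$ and $(x^\alpha v_x)_x$ terms on the left from $\|P_1 w\|^2 + \|P_2 w\|^2$ via the triangle inequality and the already-controlled lower-order terms, and conclude \eqref{carleman1}.
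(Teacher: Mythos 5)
Your overall skeleton (reduction to $b_0=b_1=0$ by absorbing the lower-order terms, conjugation $z=e^{-s\sigma}v$, expansion of $\|P^+z+P^-z\|^2$ and computation of the cross term by parts, then a final cutoff to trade the local gradient term for a local $|z|^2$ term) is the same as the paper's. But there is a genuine gap at the decisive step, and your two ``anticipated obstacles'' point at the wrong places. With the weight $\eta(x)=-x^2/2$ one has $\sigma_x=\lambda x\xi$, so the first-order part of the conjugated operator carries the factor $x^{\alpha+1}$ and the dominant positive zeroth-order term produced by the cross term is $s^3\lambda^4\iint_Q\xi^3x^{2\alpha+4}|z|^2$, \emph{not} $s^3\lambda^4\iint_Q\xi^3|z|^2$ as you assert. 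Since $x^{2\alpha+4}$ vanishes at $x=0$, this term does not control the non-degenerate quantity on the left of \eqref{carleman1}. The paper's key move — which your plan does not contain — is to fix $\delta\in(0,d)$, bound the positive term from below by its restriction to $(\delta,1)$ where $x^{2\alpha+4}\geq\delta^{2\alpha+4}$, and then add and subtract the integral over $(0,\delta)$, sending $s^3\lambda^4\int_0^T\int_0^\delta\xi^3|z|^2$ to the right-hand side as an observation term. This is precisely where \eqref{geohip} is used: $(0,\delta)\subset(0,d)\subset\omega$, so that local term is admissible. The same splitting is applied to the gradient term $s\lambda^2\iint\xi x^{2\alpha+2}|z_x|^2$ to recover $s\lambda^2\iint\xi x^{\alpha}|z_x|^2$ up to a local piece on $(0,\delta)$. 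Without this step your argument stalls exactly after the integrations by parts.

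Relatedly, your two proposed mechanisms are not the ones that make the proof work. The boundary terms at $x=0$ are not delicate at all here: every boundary evaluation produced by the integrations by parts carries a factor $x^{2\alpha+1}$ or higher (e.g.\ $x^{2\alpha+3}|z|^2$, $x^{2\alpha+1}|z_x|^2$), so they vanish identically for $\alpha\geq2$, and the only surviving boundary contribution is $s\lambda\int_0^T\xi|z_x(1,t)|^2\,dt\geq0$ at $x=1$, which has the right sign; \eqref{geohip} plays no role there. And there is no ``Hardy-type inequality available in this regime'': the failure of the weighted Hardy inequality for $\alpha\geq2$ is exactly what makes the super strong degenerate case hard (and why only regional controllability holds without \eqref{geohip}); the paper uses no Hardy inequality anywhere — the undegenerate lower-order remainders such as $s^3\lambda^3\iint\xi^3x^{2\alpha+2}|z|^2$ are simply bounded by $Cs^3\lambda^3\iint\xi^3|z|^2$ (since $x\leq1$) and absorbed by the main term after it has been ``undegenerated'' via the $(0,\delta)$ splitting, by taking $\lambda_0$ and $s_0$ large. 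You should replace your boundary/Hardy discussion by the domain-splitting argument described above.
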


The proof of Theorem \ref{theo1} will be given in section \ref{CarObs}. 



As a consequence of Theorem \ref{theo1} we have the following null controllability result: 

\begin{teorema}	\label{main}
Assume \eqref{hipfunc1} and \eqref{geohip}. Then the system \eqref{prob0} is null controllable.
\end{teorema}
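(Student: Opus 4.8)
The plan is to derive Theorem~\ref{main} from the Carleman estimate of Theorem~\ref{theo1} via the classical duality scheme (Hilbert Uniqueness Method). First, I would convert the Carleman inequality \eqref{carleman1} into an \emph{observability inequality} for the adjoint system \eqref{prob0'} with $h=0$: namely, there exists $C_{obs}>0$ such that every solution $v$ of the homogeneous adjoint equation satisfies
\begin{equation*}
\|v(\cdot,0)\|_{L^2(0,1)}^2 \le C_{obs}\iint_{\omega_T}|v|^2\,dx\,dt .
\end{equation*}
To get there I would (i) drop all the positive terms on the left-hand side of \eqref{carleman1} except $s^3\lambda^4\iint_Q e^{-2s\sigma}\xi^3|v|^2$, fix $s=s_0$, $\lambda=\lambda_0$, and note that on a fixed subinterval $(T/4,3T/4)$ the weights $e^{-2s\sigma}\xi^3$ are bounded below by a positive constant while on $\omega_T$ they are bounded above; this yields $\iint_{(0,1)\times(T/4,3T/4)}|v|^2 \le C\iint_{\omega_T}|v|^2$. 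Then (ii) I would use the standard energy estimate / dissipativity of the backward adjoint parabolic equation — the map $t\mapsto\|v(\cdot,t)\|_{L^2}^2$ is (up to a term controlled by $\|b_0\|_\infty$) essentially non-increasing as $t$ decreases, so by Gronwall $\|v(\cdot,0)\|^2 \le C\|v(\cdot,t)\|^2$ for a.e.\ $t\in(T/4,3T/4)$; integrating in $t$ over that interval and combining with (i) gives the observability inequality. This step requires the well-posedness of \eqref{prob0'} in the appropriate weighted energy space, which I would either invoke from the degenerate-parabolic literature (Cannarsa--Martinez--Vancostenoble) or establish briefly via Galerkin approximation using the Hardy-type inequality adapted to $\alpha\ge 2$.

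Second, with observability in hand, I would run the HUM duality argument to produce the control. For fixed $u_0\in L^2(0,1)$ and $\ep>0$, consider the functional
\begin{equation*}
J_\ep(v_T) = \tfrac12\iint_{\omega_T}|v|^2\,dx\,dt + \ep\|v_T\|_{L^2(0,1)} + \int_0^1 u_0(x)\,v(x,0)\,dx,
\end{equation*}
where $v$ solves \eqref{prob0'} with $h=0$ and terminal datum $v_T$. This functional is continuous, strictly convex, and — thanks precisely to the observability inequality — coercive on $L^2(0,1)$, hence attains a unique minimizer $\hat v_T^\ep$. Writing $\hat v^\ep$ for the corresponding adjoint solution, the Euler--Lagrange equation shows that the control $h^\ep := \hat v^\ep 1_\omega$ drives the solution $u^\ep$ of \eqref{prob0} to a state with $\|u^\ep(\cdot,T)\|\le\ep$, with the uniform bound $\|h^\ep\|_{L^2(\omega_T)}^2 \le C_{obs}\|u_0\|_{L^2(0,1)}^2$ independent of $\ep$. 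Passing to the limit $\ep\to 0$ (extracting a weakly convergent subsequence of $h^\ep$ and using continuity of the control-to-state map together with weak lower semicontinuity) yields a control $h\in L^2(\omega_T)$ with $u(\cdot,T)=0$, which is exactly \eqref{null}.

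The main obstacle I expect is \emph{not} the HUM machinery, which is routine once observability holds, but rather the passage from \eqref{carleman1} to the observability inequality in the super strong degenerate regime $\alpha\ge 2$. Two points need care: (a) the well-posedness and energy dissipation estimate for \eqref{prob0'} must be justified in a function space compatible with the degeneracy at $x=0$ and the Neumann-type condition $(x^\alpha v_x)(0,t)=0$ — for $\alpha\ge 2$ the natural weighted $H^1$ space and the trace/Hardy inequalities behave differently than in the $\alpha<2$ case, so one must check the integrations by parts and boundary terms are legitimate; and (b) one must verify that the weights $\theta,\xi,\sigma$ in \eqref{functions}, which blow up like $(t(T-t))^{-4}$ near $t=0,T$, indeed stay bounded above and below on the compact middle time interval and on $\omega_T$, so that the explicit constants can be absorbed. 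Once these technical points are settled, the rest of the argument is standard, and I would present it compactly, citing the degenerate-parabolic well-posedness results and the abstract HUM lemma rather than reproving them.
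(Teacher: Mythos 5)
Your proposal is correct and follows essentially the same route as the paper: the authors also derive an observability inequality from \eqref{carleman1} by restricting to the time interval $(T/4,3T/4)$ where the weight $e^{-2s\sigma}\xi^3$ is bounded below, combined with the energy estimate $|v(\cdot,0)|_2^2\le e^{2Ct}|v(\cdot,t)|_2^2$ obtained by multiplying \eqref{prob0'} by $v$ and integrating, and then invoke the standard duality/HUM argument (which the paper does not even spell out, calling it ``well known''). The only cosmetic difference is that the paper keeps the weight $e^{-2s\sigma}\xi^3$ on the right-hand side of the observability inequality, whereas you bound it above to get an unweighted localized integral; both versions suffice.
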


The same Carleman estimate allow us to prove a null controllability result for the following semilinear problem
\begin{equation}\label{prob1}
       \begin{cases}
          u_t-(x^\alpha u_x)_x+g(x,t,u,u_x)  =  f1_{\omega} &\text{in }    Q,\\ u(1,t)=0  \text{ and }  (x^\alpha u_x)(0,t)=0  & \text{in }  (0,T),\\ u(x,0)= u_0(x)  & \text{in } (0,1),
       \end{cases}
\end{equation}
where $\alpha\geq2$ and $g:Q\times\mathbb{R}^2\to\mathbb{R}$ must satisfies the following assumptions:
   \begin{equation}\label{hipfunc2}
       \begin{cases}
           g \ \ \mbox{is Lebesgue measurable};\\
           g(x,t,\cdot ,\cdot )\in C^1(\mathbb{R}^2) \ \mbox{uniformly in } (x,t)\in Q;\\
           g(x,t,0,0)=0 \ \ \forall (x,t)\in Q;\\
           \exists K>0 \ \ \mbox{such that} \ \ |g_r(x,t,r,s)|+x^{-\alpha/2}|g_s(x,t,r,s)|\leq K \ \ \forall (x,t,r,s)\in Q\times\mathbb{R}^2.
       \end{cases}
   \end{equation}

In \cite{xu2020null}, a null controllability result is obtained for \ref{main1}, when $\alpha \in (0,2)$. In this current work, we extend this fact for the super strong degenerate case applying a similar technique. To do so, we assume \eqref{hipfunc1} and \eqref{geohip} in order to prove the crucial Carleman estimate \eqref{carleman1}. Precisely, we have:

\begin{teorema}
	\label{main1}Assume \eqref{geohip} and \eqref{hipfunc2}. Then the system \eqref{prob1} is null controllable.
\end{teorema}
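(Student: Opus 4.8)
The plan is to prove the null controllability of the semilinear system \eqref{prob1} by a fixed point argument, transferring the linear null controllability (which follows from the Carleman estimate \eqref{carleman1}) to the nonlinear setting. First I would linearize: given any $\zeta\in L^2(Q)$, write $g(x,t,\zeta,\zeta_x)=a_\zeta(x,t)\zeta+x^{\alpha/2}b_\zeta(x,t)\zeta_x$, where
\begin{equation*}
a_\zeta(x,t)=\int_0^1 g_r(x,t,\tau\zeta,\tau\zeta_x)\,d\tau,\qquad b_\zeta(x,t)=x^{-\alpha/2}\int_0^1 g_s(x,t,\tau\zeta,\tau\zeta_x)\,d\tau,
\end{equation*}
which is possible because $g(x,t,0,0)=0$ and $g(x,t,\cdot,\cdot)\in C^1(\mathbb R^2)$. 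The last line of \eqref{hipfunc2} guarantees $\|a_\zeta\|_\infty\le K$ and $\|b_\zeta\|_\infty\le K$ \emph{uniformly in} $\zeta$, so the frozen-coefficient problem
\begin{equation*}
\begin{cases}
u_t-(x^\alpha u_x)_x+x^{\alpha/2}b_\zeta u_x+a_\zeta u=f1_\omega &\text{in }Q,\\
u(1,t)=0,\ (x^\alpha u_x)(0,t)=0 &\text{in }(0,T),\\
u(x,0)=u_0(x) &\text{in }(0,1),
\end{cases}
\end{equation*}
is exactly of the form \eqref{prob0} with $b_1=b_\zeta$, $b_0=a_\zeta$. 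Here one does need to check that the regularity hypothesis \eqref{hipfunc1}, i.e. $(x^{\alpha/2}b_\zeta)_x\in L^\infty(Q)$, is not actually needed for the construction of a control with a uniform bound — the standard way around this is to observe that the constants $C,s_0,\lambda_0$ in Theorem \ref{theo1}, and hence the observability constant, depend only on $K$ (the $L^\infty$ bounds), not on derivatives of the coefficients, so by a density/approximation argument the HUM control can be built with a bound depending only on $K$, $T$, $\omega$, $\alpha$.

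Next I would use Theorem \ref{main} (applied to the frozen problem) together with its quantitative form: there is a control $f_\zeta$, obtained by minimizing the usual HUM functional
\begin{equation*}
J_\ep(f)=\frac12\iint_{\omega_T}|f|^2\,dx\,dt+\frac{1}{2\ep}\|u(\cdot,T)\|_{L^2(0,1)}^2,
\end{equation*}
such that the corresponding solution $u_\zeta$ satisfies $u_\zeta(\cdot,T)=0$ and, crucially, an estimate of the form
\begin{equation*}
\|f_\zeta\|_{L^2(\omega_T)}^2+\|u_\zeta\|_{L^2(Q)}^2\le C_0\,\|u_0\|_{L^2(0,1)}^2,
\end{equation*}
with $C_0$ depending only on $K$, $T$, $\omega$, $\alpha$ — this is where the uniform-in-$\zeta$ nature of the Carleman/observability constant is essential. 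I would then define the map $\Lambda:\zeta\mapsto u_\zeta$ on a suitable ball of $L^2(Q)$ (or on the closed convex set $\{\zeta:\|\zeta\|_{L^2(Q)}\le R\}$ with $R$ chosen from $C_0$), and verify the hypotheses of Schauder's (or Kakutani's, if $\Lambda$ is multivalued due to non-uniqueness of the control) fixed point theorem: $\Lambda$ maps the ball into itself by the a priori estimate, and it is compact because the solutions $u_\zeta$ enjoy extra parabolic regularity — one gets a bound for $u_\zeta$ in a space like $L^2(0,T;H^1_{\alpha})\cap H^1(0,T;H^{-1}_\alpha)$ (the natural energy space for the degenerate operator), which embeds compactly into $L^2(Q)$ by an Aubin–Lions type argument adapted to the degenerate weight. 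Continuity of $\Lambda$ follows from continuous dependence of the linearized coefficients $a_\zeta,b_\zeta$ on $\zeta$ (via $\zeta_n\to\zeta$ in $L^2$ implies, up to a subsequence, a.e. convergence, then dominated convergence using the uniform bound $K$) together with uniqueness/stability of the minimizing control.

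A fixed point $u=u_\zeta$ with $\zeta=u$ is then a solution of \eqref{prob1} with control $f=f_u$ satisfying $u(\cdot,T)=0$, which is the claim. The main obstacle I anticipate is precisely the interplay between the gradient-dependent nonlinearity and the degeneracy: the term $x^{\alpha/2}b_\zeta u_x$ has a coefficient that is only $L^\infty$ (no control on its spatial derivative, so \eqref{hipfunc1} genuinely fails for the frozen problem), so one cannot directly invoke Theorem \ref{theo1} as stated — the resolution is to run the whole argument first for smooth coefficients, extract controls with the $K$-dependent bound, and pass to the limit, exploiting that \eqref{carleman1}'s constants never see derivatives of $b_1$ once the dangerous first-order term has been absorbed (the weight $e^{-2s\sigma}$ and the powers of $s$ being large enough to dominate the zeroth-order contribution coming from integrating the $b_1$-term by parts). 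A secondary technical point is establishing the compactness of $\Lambda$ in the degenerate weighted setting, where the usual trace and embedding theorems must be replaced by their counterparts for the weighted Sobolev spaces associated with $x^\alpha$; these are by now standard in the degenerate-parabolic literature (e.g. \cite{alabau2006carleman,cannarsa2016global}) and I would cite them rather than reprove them.
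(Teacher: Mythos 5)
Your overall strategy coincides with the paper's: linearize $g$ via $g(x,t,u,u_x)=b_0[u]\,u+x^{\alpha/2}b_1[u]\,u_x$ with $b_0[u],b_1[u]$ given by integrating the partial derivatives of $g$ along the segment from $0$ to $(u,u_x)$, obtain uniformly bounded controls from the penalized HUM functional $J_\varepsilon$, and close with Schauder. (The paper runs the fixed point at the level of $\varepsilon$-approximate controls and only afterwards lets $\varepsilon\to0$ using weak compactness, whereas you work directly with exact null controls of the frozen problem; that difference is cosmetic provided the minimal-norm null control is selected so that the map is well defined.) You also correctly flag the genuine subtlety that the frozen coefficient $b_1[\zeta]$ is only $L^\infty$, so hypothesis \eqref{hipfunc1} fails for the linearized problem and Theorem \ref{theo1} cannot be invoked verbatim; the paper glosses over this by deferring to \cite{xu2020null}, and your proposed fix (observability constants depending only on the $L^\infty$ bound $K$, obtained by integrating the $(x^{\alpha/2}b_1 v)_x$ term by parts rather than absorbing $(x^{\alpha/2}b_1)_x v$ pointwise) is the right one.

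There is, however, one concrete gap. You set up the fixed-point map $\Lambda:\zeta\mapsto u_\zeta$ on a ball of $L^2(Q)$ and invoke compactness of an embedding into $L^2(Q)$. But the frozen coefficients $a_\zeta,b_\zeta$ depend on $\zeta_x$, so $\Lambda$ must be defined on (a bounded, convex subset of) $L^2(0,T;H_\alpha^1)$, and the fixed point must be found in that topology. Compactness of $\Lambda$ there requires a uniform bound on $u_\zeta$ in $\mathcal N=H^1(0,T;L^2(0,1))\cap L^2(0,T;H_\alpha^2)$, whose compact embedding into $\mathcal M\supset L^2(0,T;H_\alpha^1)$ is what the paper uses — and, by Proposition \ref{prop1}, that $\mathcal N$-bound is available only when $u_0\in H_\alpha^1$, not for general $u_0\in L^2(0,1)$. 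Your argument therefore only proves the theorem for $H_\alpha^1$ initial data (the content of the paper's Proposition \ref{prop5.3}). The paper removes this restriction by a two-step construction you omit: first let the \emph{uncontrolled} semilinear system evolve on $[0,T_0/2]$, which regularizes the state so that $u^1(\cdot,T_0/2)\in H_\alpha^1$, and only then apply the fixed-point controllability result on $[T_0/2,T]$, gluing the two trajectories. Without this (or an equivalent smoothing step), the Schauder argument does not close for $u_0\in L^2(0,1)\setminus H_\alpha^1$.
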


As a second application of our Carleman estimate \eqref{carleman1}, we will also obtain the locall null controllability for the following degenerate nonlocal problem
 \begin{equation}\label{pbNL}
       \begin{cases}
          u_t-\ell\left(\int_0^1u\right)\left(x^\alpha u_x\right)_x =  f1_{\omega} &\text{in }    Q,\\ u(1,t)=0  \text{ and }  (x^\alpha u_x)(0,t)=0  & \text{in }  (0,T),\\ u(x,0)= u_0(x)  & \text{in } (0,1),
       \end{cases}
   \end{equation}
   where  $\ell:\R \longrightarrow \R$ is a $C^1$ function with bounded derivative, with $\ell(0)=1$. At this point, we should observe that our results remain the same if we just consider $\ell (0)>0$. The null controllability for this problem is studied in \cite{jrl2016}, when $\alpha\in(0,1)$,  and in \cite{CDJL2021} when $\alpha \in [1,2)$. Under the hypotheses \eqref{hipfunc1} and \eqref{geohip}, we extend this invertigation for $\alpha\in [2,+\infty)$, as follows:

	

\begin{teorema}\label{th-nonlocal}
Assume \eqref{geohip}. The nonlinear system \eqref{pbNL} is \textit{locally null-controllable} at any time $T>0$, i.e, there exists $\varepsilon>0$ such that, whenever $u_0\in H_\alpha^1$ and $|u_0|_{H_\alpha^1}\leq \varepsilon$, there exists a control $f\in L^2(\omega\times (0,T))$, associated to a state $u$, satisfying
		\begin{equation*}
		u(x,T)=0, \text{ for every } x\in [0,1].
		\end{equation*}  
\end{teorema}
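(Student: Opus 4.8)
The plan is to reduce the nonlocal problem \eqref{pbNL} to an application of an inverse (or implicit) function theorem between suitable Hilbert spaces, in the spirit of the Liusternik-type argument used in \cite{jrl2016,CDJL2021}. First I would set up the functional framework: define a space $Y$ of states $u$ together with controls $f$ that incorporates the weighted Carleman norms (so that $e^{-s\sigma}\xi^{-3/2}u$, $e^{-s\sigma}\xi^{-1/2}x^{\alpha/2}u_x$, etc., are in $L^2$, plus the parabolic regularity needed to make sense of $u_t$ and $(x^\alpha u_x)_x$), and a space $Z$ of right-hand sides and initial data. The key point is that the linearized operator at the origin is exactly the degenerate heat operator $u_t-(x^\alpha u_x)_x$ (since $\ell(0)=1$ and $\ell'$ is bounded), and by Theorem~\ref{main} (equivalently the observability inequality coming from Theorem~\ref{theo1}) this linear control problem is null controllable with a solution operator that is continuous in the chosen weighted norms. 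This gives surjectivity of the derivative $DH(0,0)$ of the map $H(u,f)=\bigl(u_t-\ell(\int_0^1 u)(x^\alpha u_x)_x - f1_\omega,\ u(\cdot,0)-u_0\bigr)$.

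Next I would verify the hypotheses of the inverse function theorem (I would use the version of Alcaraz-Fernández-Cara, or Coron's, as in \cite{CDJL2021}): $H$ is well defined and of class $C^1$ from $Y$ to $Z$, and $DH(0,0):Y\to Z$ is onto. Well-definedness requires checking that the nonlinear term $\ell(\int_0^1 u)(x^\alpha u_x)_x$ maps $Y$ into the target space; here the nonlocal coefficient $\ell(\int_0^1 u)$ is, for $u\in Y\hookrightarrow C([0,T];L^2(0,1))$, a bounded continuous function of $t$ bounded away from $0$ for $u$ small, so the product stays controlled by $\|(x^\alpha u_x)_x\|$ in the weighted norm. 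The $C^1$ regularity of $H$ follows from the $C^1$ regularity of $\ell$ and the fact that $u\mapsto\int_0^1 u(\cdot,t)\,dx$ is linear and continuous on $Y$. Then the inverse function theorem yields $\varepsilon>0$ such that for $\|(0,u_0)\|_Z\le\varepsilon$ — in particular for $|u_0|_{H^1_\alpha}\le\varepsilon$ after checking $H^1_\alpha$ controls the relevant part of the $Z$-norm of the initial datum — there exist $(u,f)\in Y$ with $H(u,f)=(0,u_0)$; since membership in $Y$ forces the weighted integral $\iint e^{-2s\sigma}\xi^{3}|u|^2$ to be finite and $e^{-2s\sigma}\xi^{3}$ blows up as $t\to T$, this automatically gives $u(\cdot,T)=0$.

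The main obstacle will be constructing the space $Y$ so that simultaneously (i) the linearized null-control operator is bounded into it — this is the delicate part, since one needs the Carleman weights to be compatible with the parabolic-regularity estimates on $u_t$ and $(x^\alpha u_x)_x$ near the degeneracy $x=0$, which in the super strong degenerate case $\alpha\ge 2$ is exactly where \eqref{carleman1} is nonstandard — and (ii) the nonlinear map $H$ is $C^1$ with values in $Z$. Concretely, I expect to prove an auxiliary proposition: \emph{given $g\in L^2$ with $\iint e^{-2s\sigma}\xi^{-?}|g|^2<\infty$ and $u_0\in H^1_\alpha$, there is a control $f$ driving $u_t-(x^\alpha u_x)_x=g+f1_\omega$ to rest at time $T$, with $\|(u,f)\|_Y\le C\|(g,u_0)\|_Z$}, obtained by minimizing a Carleman-weighted functional and using \eqref{carleman1} on the adjoint state, plus the additional dissipation estimates on $v_t$ and $(x^\alpha v_x)_x$ already present in the left-hand side of \eqref{carleman1}. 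Once this weighted linear result is in place, the fixed-point/inverse-function step is routine.
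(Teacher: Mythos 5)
Your overall architecture --- linearize at the origin, obtain null controllability of the linearized degenerate equation in Carleman-weighted spaces, and apply a Lyusternik-type inverse mapping theorem --- is exactly the paper's strategy. However, there are two concrete gaps. The main one: you build the state space $Y$ directly from the weights $\sigma,\xi$ of \eqref{carleman1}, which are generated by $\theta(t)=(t(T-t))^{-4}$ and therefore degenerate at \emph{both} endpoints. (Incidentally, the quantity $e^{-2s\sigma}\xi^3$ you cite tends to $0$, not $+\infty$, as $t\to T^-$; the weight that must blow up there, and hence belongs in the norm of $Y$, is essentially its reciprocal.) If the reciprocal weights also blow up as $t\to 0^+$, then any $u\in Y\cap C([0,T];L^2(0,1))$ is forced to satisfy $u(\cdot,0)=0$, so no element of $Y$ can attain a nonzero initial datum and $H'(0,0)$ cannot be onto; equivalently, the Lax--Milgram/penalization construction in your ``auxiliary proposition'' breaks down because the functional $v\mapsto\int_0^1u_0\,v(x,0)\,dx$ is not continuous with respect to a bilinear form whose weight $e^{-2s\sigma}$ vanishes at $t=0$ (one loses all control of the adjoint state near $t=0$). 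The paper resolves this by first deriving a second Carleman estimate (Proposition \ref{prop3.2}) with modified weights built from $\tau=1/m$, where $m(0)>0$, so that the weights are bounded at $t=0$ and explode only at $t=T$ (see Remark \ref{gotozero}). This intermediate step is not optional and is absent from your plan.

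The second gap concerns the nonlocal term. To show that $H$ is well defined and $C^1$ you must bound $\iint\rho_1^2\left(\int_0^1u\,dx\right)^2|(x^\alpha u_x)_x|^2\,dx\,dt$, whereas the space $E$ only controls $\iint\rho_3^2|(x^\alpha u_x)_x|^2\,dx\,dt$ with $\rho_3\leq C\rho_1$ --- the inequality goes the wrong way, so mere boundedness of $\ell\left(\int_0^1u\,dx\right)$, which is all you invoke, is not enough. One needs the smallness of $\ell\left(\int_0^1u\,dx\right)-\ell(0)$ to compensate the ratio $\rho_1^2/\rho_3^2=\zeta^4$, and the paper proves a dedicated sup-in-time estimate for exactly this purpose (Lemma \ref{supremo}), namely $\sup_t e^{M_s/m(t)}\left(\int_0^1u\,dx\right)^2\leq C\|(u,h)\|_E^2$, obtained from an $H^1(0,T)$ bound on that scalar function using the $\rho_1$- and $\rho_3$-weighted norms of $u$ and $u_t$. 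The remaining ingredients of your outline (surjectivity of the derivative from the weighted linear control result, the extra weighted estimates on $x^{\alpha/2}u_x$, $u_t$ and $(x^\alpha u_x)_x$, and $C^1$ regularity of $H$ from $\ell\in C^1$) do match the paper's Propositions \ref{linearcontrol}, \ref{lema3.3} and \ref{lema3.4}.
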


The rest of this paper is organized as follows. In Section \ref{pre}, we state some classical well-posedness results related to the systems \eqref{prob0} and \eqref{prob1}. In Section \ref{CarObs}, we present an $\alpha$-independent Carleman inequality for solutions of \eqref{prob0'} (see Theorem \ref{theo1}), which provides an observability estimate and, consequently, the null controllability of \eqref{prob0}. Sections \ref{provasemilinear} and \ref{provanaolocal} are devoted to some applications of  Theorem \ref{theo1}. More precisely, in Section \ref{provasemilinear}, we use a fixed point argument to obtain a null controllability result to the degenerate semilinear system \eqref{prob1} (see Theorem \ref{main1}); in Section \eqref{provanaolocal}, an inverse function argument allows us to prove a local null controllability result for the degenerate nonlocal system \eqref{pbNL} (see Theorem \ref{th-nonlocal}).


    
    
    
    


\section{Well-posedeness results}\label{pre}


   The usual norm and inner product in $L^2(0,1)$ and $L^2(Q)$ will be denoted respectively by $|\cdot|_2$ and $(\cdot,\cdot)$, and  by $\|\cdot\|_2$ and $(\!(\cdot,\cdot)\!)$. Moreover, the norms in $L^\infty(0,1)$ and in $L^\infty(Q)$ will be denoted respectively by $|\cdot|_\infty$ and $\|\cdot\|_\infty$.
   
   Let us consider the functional sets
   \begin{equation*}H^1_\alpha := \{\ u\in L^2(0,1); u \text{ is locally  absolutely continuous in} \ (0,1],\ x^{\alpha/2}u_x\in L^2(0,1), \ u(1)=0\ \}.\end{equation*} and 
   \begin{equation*} 
   H^2_\alpha   := \{\ u\in H^1_\alpha; \ x^\alpha u_x\in H^1(0,1)\}\\
     \end{equation*}
   with the norms \begin{equation*}|u|_{H_\alpha^1}:= \left[\int_0^1(u^2+x^\alpha|u|^2)\,dx\right]^{1/2}, \  u\in H_\alpha^1\end{equation*} and \begin{equation*}|u|_{H_\alpha^2}:= \left[\int_0^1(u^2+x^\alpha|u|^2+|(x^\alpha u_x)_x|^2)\,dx\right]^{1/2}, \  u\in H_\alpha^2.\end{equation*}
   
   With these norms, we have that $H^1_\alpha$ and $H_\alpha^2$ are Hilbert spaces.
      In \cite[Proposition 2.1]{cannarsa2004persistent} the authors provided the following characterization: 
   \begin{multline*}
   H^2_\alpha= \{u\in L^2(0,1);\ u \text{ is locally  absolutely continuous in} \ (0,1],\\ 
   \ x^\alpha u\in H_0^1(0,1),\ x^\alpha u_x\in H^1(0,1) \ (x^\alpha u_x)(0)=0 \} 
   \end{multline*}
   
   Now, for reader's convenience, let us introduce the notations $$\mathcal{M}=C(0,T;L^2(0,1))\cap L^2(0,T;H_\alpha^1) \ \ \ \ \mbox{and} \ \ \ \ \mathcal{N}=H^1(0,T;L^2(0,T))\cap L^2(0,T;H_\alpha^2).$$
   In \cite{xu2020null}, the authors proved that the embedding $\mathcal{M}\hookrightarrow\mathcal{N}$ is compact (actually, their result was proved for $\alpha\in(0,2)$, but the proof does not depend on $\alpha$).
   

   
   

The next result establishes the well-posedness of system \eqref{prob0} and it has been present in \cite{cannarsa2004persistent}.

\begin{proposicao}\label{prop1}
	Assume $b_0,b_1\in L^\infty(Q)$. For any $f\in L^2(Q)$ and any $u_0\in L^2(0,1)$, there exists exactly one solution $u\in \mathcal{M}$ to the system (\ref{prob0}). Furthermore, there exists a constant $C>0$ only depending on $T$, $\alpha$, $b_1$ and $b_0$, such that
	\[\sup_{t\in[0,T]}|u(\cdot,t)|_2^2+\|x^{\alpha/2}u_x\|_2^2\leq C(\|f1_\omega\|^2_2+|u_0|_2^2).\]
	Furthermore, if $u_0\in H_\alpha^1$, then $u\in \mathcal{N}\cap C^0([0,T];H^1_\alpha)$ and we have the following estimate: \[\sup_{t\in[0,T]}|u(\cdot,t)|_{H^1_\alpha}^2+\|u_t\|_2^2+\|(x^\alpha u_x)_x\|_2^2\leq C\left(\|f1_\omega\|_2^2+|u_0|_{H^1_\alpha}^2\right).\]
\end{proposicao}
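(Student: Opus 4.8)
The plan is to view \eqref{prob0} as an abstract linear parabolic Cauchy problem: the principal part $Au=-(x^\alpha u_x)_x$ is the nonnegative self-adjoint operator on $L^2(0,1)$ associated, via the Gelfand triple $H^1_\alpha\hookrightarrow L^2(0,1)\hookrightarrow (H^1_\alpha)'$, with the degenerate Dirichlet form $\int_0^1 x^\alpha u_x w_x\,dx$, while the terms $x^{\alpha/2}b_1 u_x+b_0 u$ are a lower-order perturbation. I would first record the functional facts needed: the continuity on $H^1_\alpha$, uniformly in $t$, of the bilinear form
\[
a(t;u,w):=\int_0^1 x^\alpha u_x w_x\,dx+\int_0^1 x^{\alpha/2}b_1(x,t)\,u_x w\,dx+\int_0^1 b_0(x,t)\,u w\,dx ;
\]
a Gårding inequality $a(t;u,u)\geq \tfrac12\|x^{\alpha/2}u_x\|_{L^2(0,1)}^2-\mu|u|_2^2$ with $\mu$ depending only on $\|b_0\|_\infty,\|b_1\|_\infty$, obtained by writing the first-order term as $(x^{\alpha/2}u_x)(b_1 w)$ and absorbing a term $\varepsilon\|x^{\alpha/2}u_x\|^2$ into the principal part, the leftover $|w|_2^2$ being harmless; the density of $H^1_\alpha$ in $L^2(0,1)$; and the characterization of $H^2_\alpha$ quoted above from \cite{cannarsa2004persistent}.

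With this in hand, existence and uniqueness of $u\in L^2(0,T;H^1_\alpha)\cap C([0,T];L^2(0,1))$ with $u_t\in L^2(0,T;(H^1_\alpha)')$ follow from the classical variational theory of parabolic equations (Lions' theorem, or equivalently a Faedo--Galerkin construction on the eigenbasis of $A$), uniqueness being immediate from the Gårding inequality and Gronwall's lemma. For the first energy estimate I would test the equation with $u$ --- rigorously on the Galerkin level, then pass to the limit --- use $\langle u_t,u\rangle=\tfrac12\frac{d}{dt}|u|_2^2$, integrate the principal term by parts so that the boundary contributions $[x^\alpha u_x u]_0^1$ vanish thanks to $u(1,t)=0$ and $(x^\alpha u_x)(0,t)=0$, absorb the first-order term as above, and close with Gronwall's lemma followed by integration in $t$.

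For the higher-order statement, assuming $u_0\in H^1_\alpha$, I would instead test the equation with $u_t$. Integrating $(x^\alpha u_x)_x\,u_t$ by parts produces $\tfrac12\frac{d}{dt}\int_0^1 x^\alpha|u_x|^2$, again with no boundary term at $x=0$ thanks to the $H^2_\alpha$-characterization (notably $(x^\alpha u_x)(0)=0$ and $x^\alpha u\in H^1_0(0,1)$); estimating $x^{\alpha/2}b_1u_x u_t$, $b_0 u u_t$ and $f1_\omega u_t$ by $\tfrac14|u_t|_2^2$ plus quantities already controlled in the previous step, and applying Gronwall, gives $\sup_{t}\|x^{\alpha/2}u_x(t)\|_{L^2(0,1)}^2+\|u_t\|_{L^2(Q)}^2\lesssim \|f1_\omega\|_2^2+|u_0|_{H^1_\alpha}^2$. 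Rewriting the PDE as $(x^\alpha u_x)_x=u_t+x^{\alpha/2}b_1u_x+b_0u-f1_\omega\in L^2(Q)$ then yields $u\in L^2(0,T;H^2_\alpha)$, i.e. $u\in\mathcal N$, and $u\in C([0,T];H^1_\alpha)$ follows from the interpolation embedding $\mathcal N\hookrightarrow C([0,T];H^1_\alpha)$ (Lions--Magenes, with $H^1_\alpha=[L^2(0,1),H^2_\alpha]_{1/2}$).

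I expect the only genuinely delicate point to be the behaviour at the degenerate endpoint $x=0$ for $\alpha\geq 2$: one must make sure that the Galerkin approximations stay in the right weighted spaces, that every integration by parts leaves no spurious flux term at $0$, and that the weighted Hardy/Poincaré-type inequalities needed both for the density step and for absorbing $x^{\alpha/2}b_1u_x$ are available in this regime. These are exactly the estimates established in \cite{cannarsa2004persistent}, on which I would rely rather than reprove them.
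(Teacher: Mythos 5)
Your variational/Galerkin argument (Lions' theorem on the Gelfand triple $H^1_\alpha\hookrightarrow L^2(0,1)\hookrightarrow (H^1_\alpha)'$, Gårding inequality, testing with $u$ and then with $u_t$) is correct in outline and is exactly the standard route; the paper itself offers no proof of this proposition but simply quotes it from \cite{cannarsa2004persistent}, where this is the argument used. The only cosmetic remark is that absorbing the term $x^{\alpha/2}b_1u_xw$ needs nothing beyond Cauchy--Schwarz and Young (no weighted Hardy inequality), since $|x^{\alpha/2}b_1u_xw|\leq \|b_1\|_\infty\,|x^{\alpha/2}u_x|\,|w|$, so the hypotheses $b_0,b_1\in L^\infty(Q)$ alone suffice, consistent with the statement.
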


We also state the well-posedness of system \eqref{prob1}, which prove can be seen in \cite[Theorem 2.1]{xu2020null}.
\begin{proposicao}
 Assume $g$ satisfies \eqref{hipfunc2}. For any $f\in L^2(Q)$ and any $u_0\in L^2(0,1)$, there exists exactly one solution $u\in \mathcal{M}$ to the system (\ref{prob1}).
    
\end{proposicao}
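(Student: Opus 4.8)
The plan is to realize the solution of the semilinear system \eqref{prob1} as a fixed point of a map built from the linear theory of Proposition \ref{prop1}, and to prove uniqueness by a direct energy estimate. The preliminary observation is that the nonlinear term lands in the right space. Since $g(x,t,0,0)=0$ and $g(x,t,\cdot,\cdot)\in C^1(\R^2)$, writing $g(x,t,r,s)=\left(\int_0^1 g_r(x,t,\tau r,\tau s)\,d\tau\right)r+\left(\int_0^1 g_s(x,t,\tau r,\tau s)\,d\tau\right)s$ and invoking the bound in \eqref{hipfunc2} gives the pointwise estimate $|g(x,t,r,s)|\le K|r|+Kx^{\alpha/2}|s|$. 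Hence for any $z\in\mathcal{M}$ one has $g(\cdot,z,z_x)\in L^2(Q)$, because $z\in L^2(Q)$ and $x^{\alpha/2}z_x\in L^2(Q)$; the same decomposition yields the Lipschitz bound $\|g(\cdot,z^1,z^1_x)-g(\cdot,z^2,z^2_x)\|_2\le K\|z^1-z^2\|_2+K\|x^{\alpha/2}(z^1_x-z^2_x)\|_2$, so $z\mapsto g(\cdot,z,z_x)$ is (Lipschitz) continuous from $\mathcal{M}$ into $L^2(Q)$.

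I would define $\Phi\colon\mathcal{M}\to\mathcal{M}$ by $\Phi(z)=u$, where $u$ is the unique solution furnished by Proposition \ref{prop1} (applied with a general right-hand side in $L^2(Q)$ and $b_0=b_1=0$) of the linear problem $u_t-(x^\alpha u_x)_x=f1_\omega-g(\cdot,z,z_x)$ with the boundary and initial data of \eqref{prob1}; a fixed point of $\Phi$ is exactly a solution of \eqref{prob1}. First I would treat the case $u_0\in H_\alpha^1$. Then Proposition \ref{prop1} gives $\Phi(z)\in\mathcal{N}$ together with a bound $\|\Phi(z)\|_{\mathcal{N}}\le C(1+\|z\|_{\mathcal{M}})$, so composing with the compact embedding $\mathcal{N}\hookrightarrow\mathcal{M}$ recorded in Section \ref{pre} shows that $\Phi$ is a continuous and compact map of $\mathcal{M}$ into itself. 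To apply the Leray--Schauder theorem it remains to produce an a priori bound, uniform in $t\in[0,1]$, for every solution of $u=t\,\Phi(u)$. Multiplying that equation by $u$, integrating over $(0,1)$, and discarding the boundary terms via the boundary conditions, the term $\int_0^1 g(\cdot,u,u_x)u\,dx$ is controlled by $K|u|_2^2+\tfrac12\int_0^1 x^\alpha u_x^2\,dx+\tfrac{K^2}{2}|u|_2^2$ through Young's inequality; the gradient part is absorbed into the dissipation $\int_0^1 x^\alpha u_x^2\,dx$, and Gronwall's lemma yields $\sup_{[0,T]}|u|_2^2+\iint_Q x^\alpha u_x^2\,dx\,dt\le C(\|f1_\omega\|_2^2+|u_0|_{2}^2)$, independently of $t\in[0,1]$. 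Leray--Schauder thus provides a solution $u\in\mathcal{M}$.

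For a general $u_0\in L^2(0,1)$ I would approximate $u_0$ by $u_0^n\in H_\alpha^1$ in $L^2(0,1)$, take the corresponding solutions $u^n\in\mathcal{M}$ from the previous step, and pass to the limit: the uniform a priori estimate bounds $(u^n)$ in $\mathcal{M}$, while the difference estimate below shows that $(u^n)$ is Cauchy in $C([0,T];L^2(0,1))\cap L^2(0,T;H_\alpha^1)$, so its limit $u\in\mathcal{M}$ solves \eqref{prob1}. Uniqueness follows from an energy estimate: if $u^1,u^2$ are two solutions and $w=u^1-u^2$, then $w(\cdot,0)=0$ and, writing the difference of the nonlinear terms as $\hat a\,w+\hat b\,w_x$ with $|\hat a|\le K$ and $|\hat b|\le Kx^{\alpha/2}$ (the same mean-value decomposition), multiplication by $w$ gives $\tfrac{d}{dt}|w|_2^2+\int_0^1 x^\alpha w_x^2\,dx\le(2K+K^2)|w|_2^2$ after absorbing the cross term $\int_0^1 \hat b\,w_x w\,dx$ into the dissipation by Young's inequality; Gronwall then forces $w\equiv0$.

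The main obstacle is the dependence of $g$ on $u_x$. It is what forces the fixed-point argument to be run in $L^2(0,T;H_\alpha^1)$ rather than in $L^2(Q)$, and hence what makes the compact embedding $\mathcal{N}\hookrightarrow\mathcal{M}$ (available only through the $\mathcal{N}$-regularity coming from $u_0\in H_\alpha^1$) the crucial ingredient for compactness of $\Phi$; this is also the reason for the two-step passage through $H_\alpha^1$ initial data. In the energy and uniqueness estimates the same gradient dependence is handled precisely by the critical weighted bound $x^{-\alpha/2}|g_s|\le K$ of \eqref{hipfunc2}, which lets the first-order contribution $x^{\alpha/2}w_x$ be absorbed by Young's inequality into the degenerate dissipation $\int_0^1 x^\alpha w_x^2\,dx$ instead of competing with it.
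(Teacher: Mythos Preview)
The paper does not give a proof of this proposition; it simply refers the reader to \cite[Theorem 2.1]{xu2020null}. So there is no in-paper argument to compare against. Your proposal is a standard and correct route: linearize via the integral mean-value identity (which is exactly how the coefficients $b_0[w]$, $b_1[w]$ are introduced later in Section~\ref{provasemilinear}), use Proposition~\ref{prop1} to define the solution map $\Phi$, obtain compactness from the embedding $\mathcal{N}\hookrightarrow\mathcal{M}$ when $u_0\in H_\alpha^1$, close with a Leray--Schauder a priori bound, and then approximate general $u_0\in L^2(0,1)$ using the same energy inequality that gives uniqueness. The only points worth tightening are cosmetic: make explicit that for $u=t\Phi(u)$ the state solves $u_t-(x^\alpha u_x)_x+t\,g(\cdot,u,u_x)=t\,f1_\omega$ with $u(\cdot,0)=t\,u_0$ (so the a priori bound is genuinely uniform in $t\in[0,1]$), and note that the paper's sentence ``the embedding $\mathcal{M}\hookrightarrow\mathcal{N}$ is compact'' is a typo for $\mathcal{N}\hookrightarrow\mathcal{M}$, which is the direction you use.
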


\section{Carleman and Observability Inequalities}\label{CarObs}



 




The aim of this section is to prove the Carleman estimate \eqref{carleman1} and, as a consequence, an observability inequality, which yields the null controllability of the linear system \eqref{prob0}.

It suffices to prove Theorem \ref{theo1} for  $b_1=b_0=0$, since the general case follows from that by taking $\tilde{h}=h-b_0v-(x^\alpha b_1 v_x)_x$. 

Let us take $\delta\in(0,d)$ and let $v$ be the solution to (\ref{prob0'}) (where $v_T\in L^2(0,1)$ and $h\in L^2(Q)$). For any $s\geq s_0>0$, we set $z=e^{-s\sigma}v$. By a density argument we can assume without loss of generality that $v$ is  regular enough. A simple computation gives us  \[v_t=e^{s\sigma}[s\sigma_tz+z_t] \ \ \mbox{and} \ \  (x^\alpha v_x)_x=e^{s\sigma}[s^2\sigma_x^2x^\alpha z+2s\sigma_xx^\alpha z_x+s(\sigma_xx^\alpha)_xz+(x^\alpha z_x)_x].\] Consequently,
    \begin{equation}\label{eq4.1.1}P^+z+P^-z=g,\end{equation}where
    \[P^-z:=-2s\lambda^2\xi x^{\alpha+2}z+2s\lambda\xi x^{\alpha+1}z_x+z_t:=I_{11}+I_{12}+I_{13},\]
    \[P^+z:=s^2\lambda^2\xi^2x^{\alpha+2}z+(x^\alpha z_x)_x+s\sigma_tz:=I_{21}+I_{22}+I_{23}\] and
    \[g=e^{-s\sigma}h-s\lambda^2\xi x^{\alpha+2}z-(\alpha+1)s\lambda\xi x^\alpha z.\]
    
    From \eqref{eq4.1.1} one has \begin{equation}
    \label{eq4.2.1}\|P^-z\|_2^2+\|P^+z\|_2^2+2(\!(P^-z,P^+z)\!)=\|g\|_2^2.\end{equation}
    
    Now let us estimate $(\!(P^-z,P^+z)\!)$. We have that
\[(\!(I_{11},I_{21})\!)=-2s^3\lambda^4\iint_Q\xi^3x^{2\alpha+4}|z|^2\,dx\,dt,\]
\begin{align*}
    (\!(I_{12},I_{21})\!)&=s^3\lambda^3\iint_Q\xi^3x^{2\alpha+3}(|z|^2)_x\,dx\,dt\\ &=3s^3\lambda^4\iint_Q\xi^3x^{2\alpha+4}|z|^2\,dx\,dt-(2\alpha+3)s^3\lambda^3\iint_Q\xi^3x^{2\alpha+2}|z|^2\,dx\,dt
\end{align*}
and
\[(\!(I_{13},I_{21})\!)=\frac{1}{2}s^2\lambda^2\iint_Q\xi^2x^{\alpha+2}(|z|^2)_t\,dx\,dt=-s^2\lambda^2\iint_Q\xi\xi_tx^{\alpha+2}|z|^2\,dx\,dt.\]
Thus
\begin{multline*}
(\!(P^-z,I_{21})\!)= s^3\lambda^4\iint_Q\xi^3x^{2\alpha+4}|z|^2\,dx\,dt-(2\alpha+3)s^3\lambda^3 \iint_Q\xi^3x^{2\alpha+2}|z|^2\,dx\,dt\\  -s^2\lambda^2\iint_Q\xi\xi_tx^{\alpha+2}|z|^2\,dx\,dt.
\end{multline*} 
Since $|\xi\xi_t|\leq C\xi^3$, for $\lambda_0$ and $s_0$ large
 enough, we can deduce that 
\begin{align}\label{eq4.3}
(\!(P^-z,I_{21})\!)&
\begin{multlined}[t]
\geq s^3\lambda^4\int_0^T\left[\int_0^\delta\xi^3x^{2\alpha+4}|z|^2\,dx +\int_\delta^1\xi^3x^{2\alpha+4}|z|^2\,dx\right]\,dt\nonumber\\ 
 -Cs^3\lambda^3\left((2\alpha+3)+\frac{C}{\lambda_0s_0}\right)\iint_Q\xi^3|z|^2\,dx\,dt\nonumber
\end{multlined}\\
& \geq s^3\lambda^4\int_0^T\int_\delta^1\xi^3x^{2\alpha+4}|z|^2\,dx\,dt
 -Cs^3\lambda^3\iint_Q\xi^3|z|^2\,dx\,dt\nonumber\\
 & \geq \delta^{2\alpha+4}s^3\lambda^4\int_0^T\int_\delta^1\xi^3|z|^2\,dx\,dt
 -Cs^3\lambda^3\iint_Q\xi^3|z|^2\,dx\,dt\nonumber\\
 &\geq Cs^3\lambda^4\int_0^T\int_\delta^1\xi^3|z|^2\,dx\,dt-Cs^3\lambda^3\iint_Q\xi^3|z|^2\,dx\,dt\nonumber\\ 
 &\begin{multlined}[t]
  =Cs^3\lambda^4\iint_Q\xi^3|z|^2\,dx\,dt-Cs^3\lambda^4\int_0^T\int_0^\delta\xi^3|z|^2\,dx\,dt\\ 
  -Cs^3\lambda^3\iint_Q\xi^3|z|^2\,dx\,dt
 \end{multlined} 
\nonumber\\ 
 &\geq Cs^3\lambda^4\left(1-\frac{1}{\lambda_0}\right)\iint_Q\xi^3|z|^2\,dx\,dt-Cs^3\lambda^4\int_0^T\int_0^\delta\xi^3|z|^2\,dx\,dt\nonumber\\
 &\geq Cs^3\lambda^4\iint_Q\xi^3|z|^2\,dx\,dt-Cs^3\lambda^4\int_0^T\int_0^\delta\xi^3|z|^2\,dx\,dt.
\end{align}
Note that $C$ depend on $\delta$ and $\alpha$, where $\delta$ is a fixed number in $(0,d)$. 

Furthermore,
\[(\!(I_{11},I_{23})\!)=-2s^2\lambda^2\iint_Q\xi\sigma_tx^{\alpha+2}|z|^2\,dx\,dt,\]
\begin{align*}(\!(I_{12},I_{23})\!)
&=s^2\lambda\iint_Q\xi\sigma_tx^{\alpha+1}(|z|^2)_x\,dx\,dt\\ &=s^2\lambda^2\iint_Q\xi(\sigma_t+\xi_t)x^{\alpha+2}|z|^2\,dx\,dt-(\alpha+1)s^2\lambda\iint_Q\xi\sigma_tx^\alpha|z|^2\,dx\,dt
\end{align*}
and
\[(\!(I_{13},I_{23})\!)=\frac{s}{2}\iint_Q\sigma_t(|z|^2)_t\,dx\,dt=-\frac{s}{2}\iint_Q\sigma_{tt}|z|^2\,dx\,dt.\]
Thus
\begin{equation*}
(\!(P^-z,I_{23})\!)=-s^2\lambda^2\iint_Q\xi(\xi_t+\sigma_t)x^{\alpha+2}|z|^2\,dx\,dt-(\alpha+1)s^2\lambda\iint_Q\xi\sigma_tx^\alpha|z|^2\,dx\,dt\\  -\frac{s}{2}\iint_Q\sigma_{tt}|z|^2\,dx\,dt.	
\end{equation*}	
We can see that $|\xi_t|,|\sigma_t|\leq C\xi^2$ and $|\sigma_{tt}|\leq C\xi^3$. Hence, from \eqref{eq4.3}, we have
\begin{align*}
(\!(P^-z,I_{21}+I_{23})\!) & 
\begin{multlined}[t]
\geq
Cs^3\lambda^4\iint_Q\xi^3|z|^2\,dx\,dt-Cs^3\lambda^4\int_0^T\int_0^\delta\xi^3|z|^2\,dx\,dt\\
-Cs^2\lambda^2\iint_Q\xi^3|z|^2\,dx\,dt-C(\alpha+1)s^2\lambda\iint_Q\xi^3|z|^2\,dx\,dt  -C\frac{s}{2}\iint_Q\xi^3|z|^2\,dx\,dt.	
\end{multlined}\\
& \geq Cs^3\lambda^4\left(1-\frac{1}{s_0\lambda_0^2} -\frac{1}{s_0\lambda_0^3}-\frac{1}{s_0^2\lambda_0^4}\right) \iint_Q\xi^3|z|^2\,dx\,dt-Cs^3\lambda^4\int_0^T\int_0^\delta\xi^3|z|^2\,dx\,dt
\end{align*}	

Therefore, for $\lambda_0$ and $s_0$ large enough, we have
\begin{equation}\label{eq4.4}
(\!(P^-z,I_{21}+I_{23})\!)\geq Cs^3\lambda^4\iint_Q\xi^3|z|^2\,dx\,dt-Cs^3\lambda^4\int_0^T\int_0^\delta\xi^3|z|^2\,dx\,dt.
\end{equation}

Moreover, we have that
\begin{align*}
(\!(I_{11},I_{22})\!)
&=-2s\lambda^2\iint_Q\xi x^{\alpha+2}z(x^\alpha z_x)_x\,dx\,dt\\ 
&= 2s\lambda^2\iint_Q[-\lambda\xi x^{2\alpha+3}zz_x+(\alpha+2)\xi x^{2\alpha+1}zz_x+\xi x^{2\alpha+2}|z_x|^2]\,\dd x \dd t\\ 
&\begin{multlined}[t]
= s\lambda^3\iint_Q\xi[-\lambda x^{2\alpha+4}+(2\alpha+3) x^{2\alpha+2}]|z|^2\,dx\,dt\\ 
 -(\alpha+2)s\lambda^2\iint_Q\xi[-\lambda x^{2\alpha+2}+(2\alpha+1) x^{2\alpha}]|z|^2\,dx\,dt  +2s\lambda^2\iint_Q\xi x^{2\alpha+2}|z_x|^2\,dx\,dt
\end{multlined}	 
\end{align*}
and
\begin{align*}
(\!(I_{13},I_{22})\!)
&=\iint_Qz_t(x^\alpha z_x)_x\,dx\,dt=-\iint_Qz(x^\alpha z_{tx})_x\,dx\,dt=\iint_Qx^\alpha z_xz_{xt}\,dx\,dt\\ &=\frac{1}{2}\iint_Q(x^\alpha|z_x|)_t\,dx\,dt=0.
\end{align*} Thus
\begin{equation}\label{eq4.5}
(\!(I_{11}+I_{13},I_{22})\!)\geq -Cs\lambda^4\iint_Q\xi^3|z|^2\,dx\,dt+2s\lambda^2\iint_Q\xi x^{2\alpha+2}|z_x|^2\,dx\,dt.
\end{equation}
On the other hand
\begin{align*}
	2s\lambda^2\iint_Q\xi x^{2\alpha+2}|z_x|^2\,dx\,dt&= 2s\lambda\int_0^T\left[\int_0^\delta\xi x^{2\alpha+2}|z_x|^2\,dx+\int_\delta^1\xi x^{2\alpha+2}|z_x|^2\,dx\right]\,dt\\ 
	&\geq  2s\lambda\delta^{\alpha+ 2}\int_0^T\int_\delta^T\xi x^\alpha|z_x|^2\,dx\,dt\\
	&= Cs\lambda^2\iint_Q\xi x^\alpha|z_x|^2\,dx\,dt- Cs\lambda^2\int_0^T\int_0^\delta\xi x^\alpha|z_x|^2\,dx\,dt.
\end{align*}
Hence, from \eqref{eq4.5} we deduce that
\begin{equation}\label{eq4.6}
(\!(I_{11}+I_{13},I_{22})\!)\geq Cs\lambda^2\iint_Q\xi x^\alpha|z_x|^2\,dx\,dt- Cs\lambda^2\int_0^T\int_0^\delta\xi x^\alpha|z_x|^2\,dx\,dt   -Cs\lambda^4\iint_Q\xi^3|z|^2\,dx\,dt.
\end{equation}

Finally, working as before we obtain
\begin{align*}
(\!(I_{12},I_{22})\!)
&=2s\lambda\iint_Q\xi x x^\alpha z_x(x^\alpha z_x)_x\,dx\,dt=s\lambda\iint_Q\xi x(|x^\alpha z_x|^2)_x\,dx\,dt\\ &=s\lambda^2\iint_Q\xi x^{2\alpha+2}|z_x|^2\,dx\,dt-s\lambda\iint_Q\xi x^{2\alpha}|z_x|^2\,dx\,dt +s\lambda\int_0^T\xi|z_x(1,t)|^2\,dt\\ 
&\geq Cs\lambda^2\iint_Q\xi x^\alpha|z_x|^2\,dx\,dt-Cs\lambda^2\int_0^T\int_0^\delta\xi x^\alpha|z_x|^2\,dx\,dt.
\end{align*}
Thus, from \eqref{eq4.6} we get 
\begin{equation}\label{eq4.7}
(\!(P^-z,I_{22})\!)\geq Cs\lambda^2\iint_Q\xi x^\alpha|z_x|^2\,dx\,dt- Cs\lambda^2\int_0^T\int_0^\delta\xi x^\alpha|z_x|^2\,dx\,dt -Cs\lambda^4\iint_Q\xi^3|z|^2\,dx\,dt.
\end{equation}    
  
Combining \eqref{eq4.4} and \eqref{eq4.7} we obtain that    
    \begin{equation*}
    (\!(P^-z,P^+z)\!)\geq C\iint_Q[s^3\lambda^4\xi^3|z|^2+s\lambda^2\xi x^\alpha|z_x|^2]\,dx\,dt -C\int_0^T\int_0^\delta[s^3\lambda^4\xi^3|z|^2+s\lambda^2\xi x^\alpha|z_x|^2]\,dx\,dt.
    \end{equation*}
Whence,
\begin{equation}\label{eq4.8}
   C\iint_Q[s^3\lambda^4\xi^3|z|^2+s\lambda^2\xi x^\alpha|z_x|^2]\,dx\,dt\leq  2(\!(P^-z,P^+z)\!)+C\int_0^T\int_0^\delta[s^3\lambda^4\xi^3|z|^2+s\lambda^2\xi x^\alpha|z_x|^2]\,dx\,dt.
    \end{equation}

From \eqref{eq4.2.1} and \eqref{eq4.8} we obtain    
\begin{align*}
& \|P^-z\|_2^2+\|P^+z\|_2^2+C\iint_Q[s^3\lambda^4\xi^3|z|^2+s\lambda^2\xi x^\alpha|z_x|^2]\,dx\,dt\\
& \leq 
\|P^-z\|_2^2+\|P^+z\|_2^2+2(\!(P^-z,P^+z)\!)+\int_0^T\int_0^\delta[s^3\lambda^4\xi^3|z|^2+s\lambda^2\xi x^\alpha|z_x|^2]\,dx\,dt\\
& \leq \|g\|_2^2+  \int_0^T\int_0^\delta[s^3\lambda^4\xi^3|z|^2+s\lambda^2\xi x^\alpha|z_x|^2]\,dx\,dt.
\end{align*}   
Hence, if we set $C_0=1/\min\{1,C\}$, we have that
\begin{multline*}
 \frac{1}{C_0}\left( \|P^-z\|_2^2+\|P^+z\|_2^2+\iint_Q[s^3\lambda^4\xi^3|z|^2+s\lambda^2\xi x^\alpha|z_x|^2]\,dx\,dt\right) \\ 
 \leq \|g\|_2^2+  \int_0^T\int_0^\delta[s^3\lambda^4\xi^3|z|^2+s\lambda^2\xi x^\alpha|z_x|^2]\,dx\,dt,
\end{multline*}
whence
\begin{multline}\label{eq4.9}
\|P^-z\|_2^2+\|P^+z\|_2^2+\iint_Q[s^3\lambda^4\xi^3|z|^2+s\lambda^2\xi x^\alpha|z_x|^2]\,dx\,dt\\
\leq C_0\left( \|g\|_2^2+  \int_0^T\int_0^\delta[s^3\lambda^4\xi^3|z|^2+s\lambda^2\xi x^\alpha|z_x|^2]\,dx\,dt\right)
\end{multline}   

Using \eqref{eq4.9} and the definitions of $P^-z$ and $P^+z$ one has
\begin{align}\label{eq4.10}
s^{-1}\iint_Q\xi^{-1}|z_t|^2\,dx\,dt
&\leq s^{-1}\iint_Q\xi^{-1}[|P^-z|^2+4s^2\lambda^4\xi^2x^{2\alpha+4}|z|^2+4s^2\lambda^2\xi^2x^{2\alpha+2}|z_x|^2]\,dx\,dt\nonumber \\ 
&\leq s^{-1}\|P^-z\|_2^2+Cs\lambda^4\iint_Q\xi^2|z|^2\,dx\,dt+Cs\lambda^2\iint_Q\xi x^\alpha|z_x|^2\,dx\,dt\nonumber \\
&\leq C\left( \|g\|_2^2+  \int_0^T\int_0^\delta[s^3\lambda^4\xi^3|z|^2+s\lambda^2\xi x^\alpha|z_x|^2]\,dx\,dt\right)
\end{align}
and
\begin{align}\label{eq4.11}
s^{-1}\iint_Q\xi^{-1}|(x^\alpha z_x)_x|^2\,dx\,dt
&\leq s^{-1}\iint_Q\xi^{-1}[|P^+z|^2+s^4\lambda^4\xi^4x^{2\alpha+4}|z|^2+s^2\xi^3|z|^2]\,dx\,dt\hspace{-1cm}\nonumber\\ 
&\leq s^{-1}\|P^+z\|_2^2+Cs^3\lambda^4\iint_Q\xi^3|z|^2\,dx\,dt+s\iint_Q\xi^2|z|^2\,dx\,dt\nonumber\\
&\leq C\left(\|g\|_2^2+  \int_0^T\int_0^\delta[s^3\lambda^4\xi^3|z|^2+s\lambda^2\xi x^\alpha|z_x|^2]\,dx\,dt.\right)
\end{align} 

Combining \eqref{eq4.9}-\eqref{eq4.11} we conclude that
\begin{multline}\label{eq4.12}
\iint_Q\left[s^{-1}\xi^{-1}(|z_t|^2+|(x^\alpha z_x)_x|^2)+s\lambda^2\xi x^\alpha|z_x|^2
+s^3\lambda^4\xi^3|z|^2\right]\,dx\,dt\\
\leq C\left( \|g\|_2^2+  \int_0^T\int_0^\delta[s^3\lambda^4\xi^3|z|^2+s\lambda^2\xi x^\alpha|z_x|^2]\,dx\,dt\right).
\end{multline}On the other hand, from the definition of $g$ one has \[\|g\|_2^2\leq \|e^{-s\sigma}h\|_2^2+Cs^2\lambda^4\iint_Q\xi^2|z|^2\,dx\,dt.\] Hence, for $s_0$ large enough, \eqref{eq4.12} gives
\begin{multline}\label{eq4.13}
\iint_Q\left[s^{-1}\xi^{-1}(|z_t|^2+|(x^\alpha z_x)_x|^2)+s\lambda^2\xi x^\alpha|z_x|^2
+s^3\lambda^4\xi^3|z|^2\right]\,dx\,dt\\
\leq C\left( \|e^{-s\sigma}h\|_2^2+  \int_0^T\int_0^\delta[s^3\lambda^4\xi^3|z|^2+s\lambda^2\xi x^\alpha|z_x|^2]\,dx\,dt\right).
\end{multline}

Now let us consider $\delta_1\in(\delta,d)$ and take a cut off function $\psi\in C^\infty([0,1])$ such that $0\leq\phi\leq1$, $\psi=1$ in $[0,\delta]$ and $\psi=0$ in $[\delta_1,1]$. For any $\epsilon>0$ we have that
\begin{align*}
s\lambda^2\int_0^T\int_0^\delta\xi x^\alpha|z_x|^2\,dx\,dt
&\leq s\lambda^2\int_0^T\int_0^{\delta_1}\xi\psi x^\alpha|z_x|^2\,dx\,dt\\ 
&= \int_0^T\int_0^{\delta_1}\left[s\lambda^3\xi \psi x^{\alpha+1}z_xz-s\lambda^2\xi\psi'x^\alpha z_xz-s\lambda^2\xi \psi(x^\alpha z_x)_xz\right]\,dx\,dt\\ 
&\begin{multlined}[t]
\leq C\epsilon^{-1}s^3\lambda^4\int_0^T\int_0^{\delta_1}\xi^3|z|^2\,dx\,dt+\iint_Q[s^2\lambda^4\xi^2|z|^2+\lambda^2x^\alpha |z_x|^2]\,dx\,dt\\ 
+\epsilon s^{-1}\iint_Q\xi^{-1}|(x^\alpha z_x)_x|^2\,dx\,dt.	
\end{multlined} 
\end{align*}
Hence, taking $\epsilon$ small enough and $s_0$ large enough, from \eqref{eq4.13} we conclude that
\begin{multline*}
\iint_Q\left[s^{-1}\xi^{-1}(|z_t|^2+|(x^\alpha z_x)_x|^2)+s\lambda^2\xi x^\alpha|z_x|^2
+s^3\lambda^4\xi^3|z|^2\right]\,dx\,dt\\ 
\leq C\left( \|e^{-s\sigma}h\|_2^2+  s^3\lambda^4\int_0^T\int_0^{\delta_1}\xi^3|z|^2\,dx\,dt\right).
\end{multline*} 

Using classical and well known arguments, we can coming back to the original variable $v$ and finish the proof.\hfill$\Box$

It is well known that a observability inequality for solutions of \eqref{prob0'} leads to Theorem \ref{main}. So, it is sufficient to prove the following inequality: 

\begin{proposicao}[Observability inequality]
Assume \eqref{geohip} and \eqref{hipfunc1}. There exists a constant $C>0$ such that, for any $v_T\in L^2(0,1)$ and $v$ solution of \eqref{prob0'} with $h=0$, one has
	\begin{equation}
	\label{obs}|v(\cdot,0)|_2^2\leq C\iint_{\omega_T}e^{-2s\sigma}\xi^3|v|^2\,dx\,dt.
	\end{equation}
\end{proposicao}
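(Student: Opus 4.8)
The plan is to derive the observability inequality \eqref{obs} from the Carleman estimate \eqref{carleman1} by the now-standard combination of a weight-transfer argument on the time interval together with a dissipation (energy) estimate for the backward adjoint equation \eqref{prob0'}. Throughout, $v$ denotes the solution of \eqref{prob0'} with $h=0$, so in particular \eqref{carleman1} applies with its right-hand side reduced to the single observation term $s^{3}\lambda^{4}\iint_{\omega_T}e^{-2s\sigma}\xi^{3}|v|^{2}\,dx\,dt$.

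First I would fix $s$ and $\lambda$ at their threshold values $s_0$, $\lambda_0$ (so all the $s$-, $\lambda$-dependent constants become a single constant $C$) and restrict the left-hand side of \eqref{carleman1} to the middle-in-time slab $t\in[T/4,3T/4]$. On that slab the weight $e^{-2s\sigma}\xi^{3}$ is bounded below by a positive constant depending only on $T$ (since $\theta$, $\xi$, $\sigma$ are all continuous and bounded above and below there), so \eqref{carleman1} yields
\begin{equation*}
\int_{T/4}^{3T/4}\int_0^1 |v|^2\,dx\,dt \le C\iint_{\omega_T}e^{-2s\sigma}\xi^{3}|v|^2\,dx\,dt.
\end{equation*}
Next I would invoke the dissipativity of \eqref{prob0'}: since \eqref{prob0'} is a backward parabolic problem, $t\mapsto |v(\cdot,t)|_2^2$ is (up to the zero-order terms $b_0$, $b_1$, absorbed via Gronwall exactly as in Proposition \ref{prop1}) nonincreasing as $t$ decreases, i.e. $|v(\cdot,0)|_2\le C|v(\cdot,t)|_2$ for every $t\in[0,T]$, with $C$ depending only on $T$, $\|b_0\|_\infty$, $b_1$, $\alpha$. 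Integrating this in $t$ over $[T/4,3T/4]$ gives $|v(\cdot,0)|_2^2\le C\int_{T/4}^{3T/4}|v(\cdot,t)|_2^2\,dt$, and combining with the previous display produces \eqref{obs}.

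The main obstacle — and the only point that really needs care — is the energy estimate $|v(\cdot,0)|_2\le C|v(\cdot,t)|_2$ for the super strong degenerate operator: one must check that multiplying \eqref{prob0'} by $v$ and integrating by parts is legitimate, that the boundary terms at $x=0$ and $x=1$ vanish (this uses precisely the boundary conditions $(x^{\alpha}v_x)(0,t)=0$, $v(1,t)=0$ together with the characterization of $H_\alpha^2$ recalled in Section \ref{pre}, so that $x^{\alpha/2}v_x\in L^2$ and no boundary contribution at the degeneracy point survives), and that the first-order term $x^{\alpha/2}b_1$ is controlled — here assumption \eqref{hipfunc1} is what lets one integrate the term $(x^{\alpha/2}b_1 v)_x$ by parts and bound it by $\|b_0\|_\infty$-type constants times $|v(\cdot,t)|_2^2$, after which Gronwall's inequality on $[0,T]$ closes the argument. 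All of this is contained in the well-posedness theory quoted in Proposition \ref{prop1} applied to the time-reversed equation, so I would simply cite it rather than redo the computation. Finally, passing back to the clean form \eqref{obs} requires noting that on $\omega_T$ the weight $e^{-2s\sigma}\xi^3$ appearing on the right is exactly the one already present, so no further manipulation is needed, and the proof is complete. $\hfill\Box$
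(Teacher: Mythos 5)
Your proposal is correct and follows essentially the same route as the paper: the Carleman estimate \eqref{carleman1} with $h=0$, the lower bound on the weight $e^{-2s\sigma}\xi^{3}$ over the middle slab $[T/4,3T/4]$, the energy inequality $|v(\cdot,0)|_2^2\le e^{2Ct}|v(\cdot,t)|_2^2$ obtained by multiplying \eqref{prob0'} by $v$ and applying Gronwall, and finally integration over $(T/4,3T/4)$. The only cosmetic difference is the order in which the weight-transfer and energy steps are presented; the substance is identical.
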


\begin{proof}
	From Theorem \ref{theo1} we have that 
\begin{equation}\label{eq5.0}
    s^3\lambda^4\iint_Qe^{-2s\sigma}\xi^3|v|^2\,dx\,dt\leq Cs^3\lambda^4\int_0^T\int_\omega e^{-2s\sigma}\xi^3|v|^2\,dx\,dt.
\end{equation}	
	
Multiplying the equation in \eqref{prob0'} by $v$ and integrating on $(0,1)$ we obtain that
\[-\frac{1}{2}\frac{d}{dt}|v(\cdot,t)|_2^2+\int_0^1x^\alpha|v_x|^2\,dx=\int_0^1b_1x^{\alpha/2}v_xv\,dx-\int_0^1b_0|v|^2\,dx.\]
Hence
\[-\frac{1}{2}\frac{d}{dt}|v(\cdot,t)|^2+\frac{1}{2}\int_0^1x^\alpha|v_x|^2\,dx\leq C|v(\cdot,t)|^2.\] 
Thus 
\begin{equation}\label{eq5.1}
    |v(\cdot,0)|_2^2\leq e^{2Ct}|v(\cdot,t)|^2 \ \ \ \forall t\in(0,T).
\end{equation}
	
Integrating \eqref{eq5.1} on $(T/4,3T/4)$ and using \eqref{eq5.0} we deduce that
\begin{align*}
    |v(\cdot,0)|_2^2&=\frac{2}{T}\int_{T/4}^{3T/4}|v(\cdot,0)|^2\,dt\leq C\int_{T/4}^{3T/4}\int_0^1|v|^2\,dx\,dt\\ 
    &\leq  C\int_{T/4}^{3T/4}\int_0^1s^3\lambda^4e^{-2s\sigma}\xi^3|v|^2\,dx\,dt\leq C\int_0^T\int_\omega e^{-2s\sigma}\xi^3|v|^2\,dx\,dt.
\end{align*}
\end{proof}

\section{The degenerate semilinear problem}\label{provasemilinear}
As we have pointed out in the introduction, in \cite{xu2020null} the authors proved a null controllability result for system \eqref{prob1} with $\alpha\in(0,2)$. However, most of the arguments  in that work does not depend on $\alpha$. Indeed, the only result in that paper that only works for $\alpha\in(0,2)$ is an observability estimate for system \eqref{prob0} of \cite{cannarsa2006regional}. In \eqref{obs}, we give a such estimate that works for $\alpha\geq2$. So, the majority of the arguments of \cite{xu2020null} can now be adapted to deal with \eqref{prob1} with $\alpha\geq2$. For readers convenience, we will reproduce their main guideline, but we will not present the proof of the results.

Firstly, for each $w\in L^2(0,T;H_\alpha^1)$, let us set the following notations
$$b_0[w](x,t)=\int_0^1g_s(x,t,\lambda w(x,t),\lambda w_x(x,t))\,d\lambda \ \ \ \ \mbox{and} \ \ \ \ b_1[w](x,t)=x^{-\alpha/2}\int_0^1g_p(x,t,\lambda w(x,t),\lambda w_x(x,t))\,d\lambda.$$ From \eqref{hipfunc2} we have \begin{equation}
    \label{eq4.1}\|b_0[w]\|_\infty + \|b_1[w]\|_\infty\leq 2K \ \ \ \forall w\in L^2(0,T;H_\alpha^1).
\end{equation} 
Furthermore,
\begin{equation}
    \label{eq4.2}g(x,t,u,u_x)=b_0[u](x,t)u(x,t)+x^{\alpha/2}b_1[u](x,t)u_x(x,t) \ \ \ \forall u\in L^2(0,T;H_\alpha^1) \ \ \mbox{and} \ \ a.e. \ \ \mbox{in} \ \ Q.
\end{equation}

As we will see, from \eqref{eq4.2} we can develop a fixed point argument to prove Theorem \ref{main1}.

For now, let's assume that $u_0\in H_\alpha^1$ and for each $\varepsilon>0$ consider the functional $J_\varepsilon:L^2(Q)\to\mathbb{R}$ given by \[J_\varepsilon(h)=\frac{1}{2}\int_0^T\int_{\omega}|h|^2\,dx\,dt+\frac{1}{2\varepsilon}\int_0^1|u(x,T)|^2\,dx,\]
where $u$ is the solution of \eqref{prob0} with $f=h$. The first step is to establishes a approximate null controllability result for the linear system:
\begin{proposicao}\label{prop5.1}
Assume that $u_0\in H_\alpha^1$ and \eqref{geohip}. Then, there exists $C>0$ (that does not depends on $\varepsilon$) and $h_\varepsilon\in L^2(Q)$ such that
\begin{enumerate}
    \item  $J_\varepsilon(h_\varepsilon)\leq J_\varepsilon(h) \ \ \ \forall h\in L^2(Q)$;

\item  $\int_0^T\int_\omega|h_\varepsilon|^2\,dx\,dt\leq C|u_0|^2$;

\item  if $u_\varepsilon$ is the solution of \eqref{prob0} with $f=h_\varepsilon$, then $|u_\varepsilon(\cdot,T)|\leq \varepsilon.$
\end{enumerate}


\end{proposicao}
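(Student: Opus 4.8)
The plan is to construct $h_\varepsilon$ as the (unique) minimizer of the strictly convex, coercive, continuous functional $J_\varepsilon$ over the Hilbert space $L^2(Q)$, and then extract the three stated properties from the optimality condition. First I would check existence and uniqueness of the minimizer: $J_\varepsilon$ is continuous (the map $h\mapsto u(\cdot,T)$ is continuous from $L^2(Q)$ into $L^2(0,1)$ by Proposition~\ref{prop1}), strictly convex (the first term is convex, the penalization term is convex in $h$ since $h\mapsto u(\cdot,T)$ is affine, and the sum is strictly convex because $\|h\|_{L^2(\omega_T)}^2$ is), and coercive (the first term alone already forces $J_\varepsilon(h)\to\infty$ as $\|h\|_{L^2(\omega_T)}\to\infty$; on the orthogonal complement, i.e.\ the part of $h$ supported outside $\omega$, $J_\varepsilon$ does not depend on $h$, so one should really work with controls in $L^2(\omega_T)$, or note that $J_\varepsilon$ descends to that quotient). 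This yields item~(1).

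Next I would write the Euler--Lagrange equation for $h_\varepsilon$. Let $u_\varepsilon$ solve \eqref{prob0} with $f=h_\varepsilon$, and introduce the adjoint state $v_\varepsilon$ solving \eqref{prob0'} with $h=0$ and terminal datum $v_\varepsilon(\cdot,T)=\tfrac1\varepsilon u_\varepsilon(\cdot,T)$. Computing $\langle J_\varepsilon'(h_\varepsilon),\hat h\rangle=0$ for all $\hat h\in L^2(\omega_T)$ and integrating by parts in the duality between \eqref{prob0} and \eqref{prob0'} gives the characterization $h_\varepsilon = v_\varepsilon|_{\omega_T}$ (up to sign). Then I would test the equation for $u_\varepsilon$ against $v_\varepsilon$ on $Q$: this produces the identity
\[
\frac1\varepsilon|u_\varepsilon(\cdot,T)|_2^2 + \iint_{\omega_T}|v_\varepsilon|^2\,dx\,dt = (u_0,v_\varepsilon(\cdot,0)).
\]
Bounding the right-hand side by $|u_0|_2\,|v_\varepsilon(\cdot,0)|_2$ and invoking the observability inequality \eqref{obs} — which controls $|v_\varepsilon(\cdot,0)|_2^2$ by $C\iint_{\omega_T}e^{-2s\sigma}\xi^3|v_\varepsilon|^2$, and hence, since $e^{-2s\sigma}\xi^3$ is bounded on $\omega_T$, by $C\iint_{\omega_T}|v_\varepsilon|^2$ — gives
\[
\frac1\varepsilon|u_\varepsilon(\cdot,T)|_2^2 + \iint_{\omega_T}|v_\varepsilon|^2 \le C|u_0|_2\Big(\iint_{\omega_T}|v_\varepsilon|^2\Big)^{1/2}.
\]
A Young inequality absorbs the adjoint term on the left, yielding simultaneously $\iint_{\omega_T}|v_\varepsilon|^2\le C|u_0|_2^2$, which is item~(2) since $h_\varepsilon=v_\varepsilon|_{\omega_T}$, and $|u_\varepsilon(\cdot,T)|_2^2\le C\varepsilon|u_0|_2^2$, from which $|u_\varepsilon(\cdot,T)|_2\le\varepsilon$ follows once $\varepsilon$ is small (or, more carefully, by reabsorbing: one in fact gets $\tfrac1\varepsilon|u_\varepsilon(\cdot,T)|_2^2\le C|u_0|_2^2$, so $|u_\varepsilon(\cdot,T)|_2\le\sqrt{C}\,\varepsilon^{1/2}|u_0|_2$; to get the clean bound $\le\varepsilon$ one either restricts $\varepsilon$ or absorbs constants into a rescaled penalization), giving item~(3).

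The main obstacle I anticipate is not any single estimate but making the duality computation rigorous: the identity $h_\varepsilon=v_\varepsilon|_{\omega_T}$ and the energy identity above require integrating by parts in $t$ and in $x$ against the degenerate operator $(x^\alpha\cdot_x)_x$, with the boundary terms at $x=0$ (where $(x^\alpha v_x)(0,t)=0$ and the characterization of $H^2_\alpha$ guarantee the relevant traces vanish) and at $x=1$ (where $v(1,t)=u(1,t)=0$) needing to be checked, and the time-regularity of $u_\varepsilon,v_\varepsilon$ (they lie in $\mathcal M$, and in $\mathcal N$ when the data are in $H^1_\alpha$) being just enough to justify the pairings; a density argument reduces to smooth data. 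Once the identity is in place, the rest is routine. All of this is standard HUM/penalization machinery, so I would keep the write-up brief and refer to \cite{xu2020null} for the details, as the authors indicate.
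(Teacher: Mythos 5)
Your proposal follows essentially the same route as the paper: the paper's (sketched) argument is precisely to take $h_\varepsilon$ as the minimizer of the strictly convex, coercive functional $J_\varepsilon$, identify it via the Euler--Lagrange equation as $h_\varepsilon=-\varphi_\varepsilon 1_\omega$ with $\varphi_\varepsilon$ the adjoint state having final datum $\tfrac{1}{\varepsilon}u_\varepsilon(\cdot,T)$, and then combine the duality identity with the observability inequality \eqref{obs} to deduce items~2 and~3, deferring the details to \cite{xu2020null}. Your write-up fills in exactly those details correctly (including the sign in the characterization, the boundedness of $e^{-2s\sigma}\xi^3$ on $\omega_T$, and the harmless relabeling of $\varepsilon$ needed to pass from $|u_\varepsilon(\cdot,T)|\leq C\varepsilon^{1/2}|u_0|$ to the stated bound), so no further comparison is needed.
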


The idea of the proof of Proposition \ref{prop5.1} is to verify that the minimum point of $J_\varepsilon$ is precisely $h_\varepsilon=-\varphi_\varepsilon1_\omega$, where $\varphi_\varepsilon$ is the solution of the adjoint system of \eqref{prob0}, with final datum $\varphi_\varepsilon(x,T)=\frac{1}{\varepsilon}u_\varepsilon(x,T)$. Then, it is possible to work with the adjoint equation to obtain the estimates given in the items 2 and 3. 

Now, a standard argument based on the Schauder's Fixed Point Theorem can be applied to obtain an approximate null controllability result for the semilinear system \eqref{prob1}.
\begin{proposicao}\label{prop5.2}
Assume that $u_0\in H_\alpha^1$ and \eqref{geohip}. Then, for each $\varepsilon>0$ there exists $h_\varepsilon\in L^2(Q)$ and $C>0$ (that does not depends on $\varepsilon$) such that:
\begin{enumerate}
    \item $\int_0^T\int_\omega|h_\varepsilon|^2\,dx\,dt\leq C|u_0|^2$;

\item  if $u_\varepsilon$ is the solution of \eqref{prob1} with $f=h_\varepsilon$, then $|u_\varepsilon(\cdot,T)|\leq \varepsilon.$
\end{enumerate}

\end{proposicao}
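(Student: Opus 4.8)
The plan is to deduce Proposition \ref{prop5.2} from Proposition \ref{prop5.1} by a Schauder fixed point argument, exactly as in \cite{xu2020null}, now legitimate for $\alpha\ge 2$ because the linearized observability estimate \eqref{obs} holds in this range. First I would fix $\varepsilon>0$ and $u_0\in H_\alpha^1$, and for each $w\in L^2(0,T;H_\alpha^1)$ consider the linear degenerate problem obtained by freezing the coefficients, i.e. \eqref{prob0} with $b_0=b_0[w]$ and $b_1=b_1[w]$; by \eqref{eq4.1} these coefficients are bounded in $L^\infty(Q)$ by $2K$ uniformly in $w$, so Proposition \ref{prop5.1} applies with constants independent of $w$ (and of $\varepsilon$). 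This yields a control $h_\varepsilon^w$ with $\int_0^T\int_\omega |h_\varepsilon^w|^2\,dx\,dt\le C|u_0|^2$ and a state $u_\varepsilon^w$ solving that linear problem with $|u_\varepsilon^w(\cdot,T)|\le\varepsilon$. Define the map $\Lambda(w)=u_\varepsilon^w$.

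Next I would set up the functional framework for Schauder. By the well-posedness estimate in Proposition \ref{prop1} (the $H_\alpha^1$ part), since $u_0\in H_\alpha^1$ and $\|h_\varepsilon^w 1_\omega\|_2^2\le C|u_0|^2$ is bounded uniformly in $w$, the states $u_\varepsilon^w$ lie in a fixed bounded subset of $\mathcal{N}\cap C^0([0,T];H_\alpha^1)$, hence in a fixed bounded subset of $\mathcal M=C(0,T;L^2(0,1))\cap L^2(0,T;H_\alpha^1)$. Using the compact embedding $\mathcal M\hookrightarrow\mathcal N$ recalled in Section \ref{pre} (more precisely the compactness of the relevant embedding into $L^2(0,T;H_\alpha^1)$), the image of $\Lambda$ is precontained in a compact convex subset $\mathcal K$ of $L^2(0,T;H_\alpha^1)$; one takes $\mathcal K$ to be the closed convex hull of that precompact set, which is still compact, and checks $\Lambda(\mathcal K)\subset\mathcal K$. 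Then I would verify continuity of $\Lambda$ on $\mathcal K$: if $w_n\to w$ in $L^2(0,T;H_\alpha^1)$, then along a subsequence $w_n\to w$ and $\partial_x w_n\to \partial_x w$ a.e., so by the $C^1$ regularity and the uniform bound in \eqref{hipfunc2} together with dominated convergence, $b_0[w_n]\to b_0[w]$ and $b_1[w_n]\to b_1[w]$ in, say, every $L^p(Q)$, $p<\infty$; passing to the limit in the variational formulation of the frozen linear problem and in the optimality system defining $h_\varepsilon^{w_n}$ (whose minimizer is $-\varphi_\varepsilon^{w_n}1_\omega$, with $\varphi_\varepsilon^{w_n}$ solving the corresponding adjoint system) shows $u_\varepsilon^{w_n}\to u_\varepsilon^{w}$, so $\Lambda$ is continuous. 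Schauder's fixed point theorem then produces $w=\Lambda(w)$; in view of \eqref{eq4.2}, this $w$ is a solution of the semilinear system \eqref{prob1} with control $h_\varepsilon:=h_\varepsilon^{w}$, and the two asserted estimates are inherited from Proposition \ref{prop5.1}.

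The main obstacle I expect is the continuity of $\Lambda$, and more precisely the uniqueness needed to pass from subsequential convergence to full convergence: the minimizer $h_\varepsilon^w$ of $J_\varepsilon$ (for frozen $w$) is unique by strict convexity, but one must show the map $w\mapsto h_\varepsilon^w$ is itself continuous, which requires stability of the optimality system $h_\varepsilon^w=-\varphi_\varepsilon^w1_\omega$, $\varphi_\varepsilon^w(\cdot,T)=\varepsilon^{-1}u_\varepsilon^w(\cdot,T)$ under perturbation of the zeroth- and first-order coefficients. This is where the $L^\infty$ control on $(x^{\alpha/2}b_1[w])_x$ — which does \emph{not} obviously follow from \eqref{hipfunc2} — does not play a role for this step, so one only needs the basic energy estimates for the degenerate parabolic operator with bounded coefficients, together with the uniform observability constant from \eqref{obs}; the delicate point is simply that the convergence $b_i[w_n]\to b_i[w]$ is only along subsequences (it comes from a.e. convergence of $w_n$ and $\partial_x w_n$), so one argues by a standard subsequence-of-subsequence contradiction using the uniqueness of the limit. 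A secondary technical care is ensuring the convex hull $\mathcal K$ is genuinely compact and that $\Lambda$ maps it into itself, which follows from the uniform $\mathcal N\cap C^0([0,T];H_\alpha^1)$ bound via Proposition \ref{prop1}. I would not reproduce these routine estimates, referring instead to the corresponding arguments in \cite{xu2020null}, which carry over verbatim once \eqref{obs} is available for $\alpha\ge 2$.
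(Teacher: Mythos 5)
Your proposal follows essentially the same route as the paper: the paper does not actually write out a proof of Proposition \ref{prop5.2}, stating only that it follows from Proposition \ref{prop5.1} by a standard Schauder fixed point argument with the frozen coefficients $b_0[w]$, $b_1[w]$ and deferring the details to \cite{xu2020null}, and your sketch (uniform bounds from \eqref{eq4.1}, compactness via the embedding of $\mathcal N$ into $\mathcal M$, continuity of $w\mapsto h_\varepsilon^w$ by a subsequence--uniqueness argument, conclusion via \eqref{eq4.2}) is exactly that intended argument. The caveat you raise — that $(x^{\alpha/2}b_1[w])_x\in L^\infty(Q)$, needed for the Carleman/observability machinery behind Proposition \ref{prop5.1}, does not obviously follow from \eqref{hipfunc2} — is a genuine point, but it is one the paper itself also leaves unaddressed rather than a gap introduced by your argument.
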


As we have said at the beginning of this section, the detailed proofs of  Propositions \ref{prop5.1} and \ref{prop5.2} can be found in \cite{xu2020null}. Proposition \ref{prop5.2} allows us to prove a null controllability result for the semilinear system \eqref{prob1}, with the initial data in $H_\alpha^1$.

\begin{proposicao}\label{prop5.3}
Assume that $u_0\in H_\alpha^1$ and \eqref{geohip}. Then the system \eqref{prob1} is null controllable. 
\end{proposicao}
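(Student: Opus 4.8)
The plan is to pass from the approximate null controllability of Proposition \ref{prop5.2} to exact null controllability by the standard limiting argument, exploiting the uniform (in $\varepsilon$) control bound together with the compact embedding $\mathcal{M}\hookrightarrow\mathcal{N}$ recalled in Section \ref{pre}. First I would fix $u_0\in H_\alpha^1$ and, for each $\varepsilon>0$, take the control $h_\varepsilon\in L^2(\omega\times(0,T))$ and the associated state $u_\varepsilon$ furnished by Proposition \ref{prop5.2}, so that $\iint_{\omega_T}|h_\varepsilon|^2\,dx\,dt\le C|u_0|_2^2$ with $C$ independent of $\varepsilon$, and $|u_\varepsilon(\cdot,T)|_2\le\varepsilon$.

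Next I would derive a uniform bound for $u_\varepsilon$ in the energy space. Since $u_\varepsilon$ solves \eqref{prob1} with source $f=h_\varepsilon1_\omega\in L^2(Q)$ and $u_\varepsilon(\cdot,0)=u_0\in H_\alpha^1$, and since by \eqref{hipfunc2}--\eqref{eq4.2} the semilinear term can be written as $g(x,t,u_\varepsilon,u_{\varepsilon,x})=b_0[u_\varepsilon]u_\varepsilon+x^{\alpha/2}b_1[u_\varepsilon]u_{\varepsilon,x}$ with $\|b_0[u_\varepsilon]\|_\infty+\|b_1[u_\varepsilon]\|_\infty\le 2K$ by \eqref{eq4.1}, the state $u_\varepsilon$ is actually the solution of a \emph{linear} degenerate problem of the form \eqref{prob0} with coefficients $b_0=b_0[u_\varepsilon]$, $b_1=b_1[u_\varepsilon]$ bounded uniformly in $\varepsilon$. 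Applying the regularity part of Proposition \ref{prop1} (whose constant depends only on $T$, $\alpha$, and the $L^\infty$ bounds on $b_0,b_1$, hence is $\varepsilon$-independent) gives
\[
\sup_{t\in[0,T]}|u_\varepsilon(\cdot,t)|_{H_\alpha^1}^2+\|u_{\varepsilon,t}\|_2^2+\|(x^\alpha u_{\varepsilon,x})_x\|_2^2\le C\bigl(\|h_\varepsilon1_\omega\|_2^2+|u_0|_{H_\alpha^1}^2\bigr)\le C|u_0|_{H_\alpha^1}^2.
\]
Thus $(u_\varepsilon)$ is bounded in $\mathcal{N}\cap C^0([0,T];H_\alpha^1)$ and $(h_\varepsilon)$ is bounded in $L^2(\omega\times(0,T))$. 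Extracting subsequences, $h_\varepsilon\rightharpoonup h$ weakly in $L^2(\omega\times(0,T))$, $u_\varepsilon\rightharpoonup u$ weakly in $\mathcal{N}$, and by the compact embedding $\mathcal{M}\hookrightarrow\mathcal{N}$ (note: the embedding is stated as $\mathcal M\hookrightarrow\mathcal N$ in the excerpt, with $\mathcal N$ the smoother space; here I use the corresponding Aubin--Lions compactness so that $u_\varepsilon\to u$ strongly in $C^0([0,T];L^2(0,1))$ and in $L^2(0,T;H_\alpha^1)$, which is exactly what is needed to pass to the limit in the quadratic/gradient nonlinearity and in the traces at $t=0,T$).

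Then I would pass to the limit in the equation: the linear terms converge by weak convergence, and the nonlinear term $g(x,t,u_\varepsilon,u_{\varepsilon,x})$ converges to $g(x,t,u,u_x)$ using the strong $L^2(0,T;H_\alpha^1)$ convergence of $u_\varepsilon$ together with the Lipschitz bound $|g_r|+x^{-\alpha/2}|g_s|\le K$ from \eqref{hipfunc2} (which controls $|g(x,t,r_1,s_1)-g(x,t,r_2,s_2)|\le K|r_1-r_2|+Kx^{\alpha/2}|s_1-s_2|$, and $x^{\alpha/2}|u_{\varepsilon,x}-u_x|\to0$ in $L^2(Q)$). Hence $u$ solves \eqref{prob1} with control $f=h1_\omega$. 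Finally, strong convergence in $C^0([0,T];L^2(0,1))$ gives $u(\cdot,0)=u_0$ and, from $|u_\varepsilon(\cdot,T)|_2\le\varepsilon\to0$, $u(\cdot,T)=0$. This proves null controllability for initial data in $H_\alpha^1$, which is exactly the statement of Proposition \ref{prop5.3}.

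The main obstacle is the passage to the limit in the nonlinear gradient term, since $g$ depends on $u_x$ and one only has weak convergence of $u_{\varepsilon,x}$ in $L^2$; this is resolved precisely by upgrading to \emph{strong} convergence in $L^2(0,T;H_\alpha^1)$ via the Aubin--Lions compactness recalled in Section \ref{pre}, which in turn relies crucially on the $\varepsilon$-uniform $\mathcal N$-bound obtained from Proposition \ref{prop1} — and that bound is available only because, along the way, $u_\varepsilon$ can be viewed as the solution of a linear problem with uniformly bounded coefficients. A secondary point to be careful about is that Proposition \ref{prop1}'s higher-regularity estimate requires $u_0\in H_\alpha^1$, which is why Proposition \ref{prop5.3} is stated with that restriction (the general $u_0\in L^2$ case being handled afterwards by a density/time-shift argument).
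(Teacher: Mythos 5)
Your proposal follows essentially the same route as the paper: extract the controls and states from Proposition \ref{prop5.2}, obtain an $\varepsilon$-uniform bound in $\mathcal N$ from the linear energy estimates (viewing the nonlinearity as a linear term with uniformly bounded coefficients $b_0[u_\varepsilon]$, $b_1[u_\varepsilon]$), pass to weak limits, upgrade to strong convergence via the compact embedding between $\mathcal N$ and $\mathcal M$, and identify the limit as a solution vanishing at $t=T$. The paper's own proof is just a terser version of this same argument (and, incidentally, your remark about the direction of the compact embedding correctly flags a notational inconsistency in the paper).
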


\begin{proof}

Given $\varepsilon>0$, let us take the control $h_\varepsilon$ and the solution $u_\varepsilon$ given by Proposition \ref{prop5.2}. From item 1, there exists $\bar{h}\in L^2(Q)$ such that $h_\varepsilon\rightharpoonup\bar{h}$ in $L^2(Q)$. Furthermore, using item 1 and the energy estimates from Theorem \ref{theo1}, we can deduce that $|u_\varepsilon|^2_{\mathcal N}\leq C|u_0|^2$. Thus, there also exists $\bar{u}\in \mathcal N$ such that $u_\varepsilon\rightharpoonup\bar{u}$ in $\mathcal N$. From the compact embedding $\mathcal N\hookrightarrow \mathcal M$, we conclude that $u_\varepsilon\to\bar{u}$ in $\mathcal M$. Then $\bar{u}$ is the solution of \eqref{prob1} with $f=\bar{h}$ and, from item 2, $\bar{u}(\cdot, T)=0$.
\end{proof}

Finally, we are ready to prove Theorem \ref{main1}.

\begin{proof}[{\it Proof of Theorem \ref{main1}}] Let $u^1$ be the weak solution of the following system
\begin{equation}\label{prob1.1}
       \begin{cases}
          u_t-(x^\alpha u_x)_x+g(x,t,u,u_x)  =  0 &\text{in }    (0,1)\times(0,T_0),\\ u(1,t)=0  \text{ and }  (x^\alpha u_x)(0,t)=0  & \text{in }  (0,T_0),\\ u(x,0)= u_0(x)  & \text{in } (0,1),
       \end{cases}
\end{equation}where $T_0\in(0,T)$.

Now, let us consider the following system
\begin{equation}\label{prob1.2}
       \begin{cases}
          u_t-(x^\alpha u_x)_x+g(x,t,u,u_x)  =  h1_{\omega} &\text{in }    (0,1)\times(T_0/2,T)\\ u(1,t)=0  \text{ and }  (x^\alpha u_x)(0,t)=0  & \text{in }  (T_0/2,T),\\ u(x,T_0/2)= u^1(x,T_0/2)  & \text{in } (0,1).
       \end{cases}
\end{equation}

From Theorem \ref{theo1}, $u^1(\cdot,T_0/2)\in H_\alpha^1$. Hence, from Proposition \ref{prop5.3}, there exists a control $h^1\in L^2((0,1)\times(T_0/2,T))$ such that the associated weak solution $u^2$ of \eqref{prob1.2} satisfies $u^2(\cdot,T)=0$ in $(0,1)$. Now we can take $u\in C([0,T];L^2(Q))$ and $h\in L^2(Q)$ given by
$$u(x,t)=\left\{\begin{array}{ccc}u^1(x,t)& \mbox{if}& t\in[0,T_0/2]\\ u^2(x,t)&\mbox{if}& t\in[T_0/2,T]\end{array}\right. \ \ \ \mbox{and} \ \ \ h(x,t)=\left\{\begin{array}{ccc}0& \mbox{if}& t\in[0,T_0/2]\\ h^1(x,t)&\mbox{if}& t\in[T_0/2,T]\end{array}\right..$$

It is easy to see that $u\in \mathcal M$ is the solution of \eqref{prob1}, with $f=h$, satisfying $u(\cdot,T)=0$.
\end{proof}

\section{The degenerate nonlocal problem}\label{provanaolocal}

In this section, we will obtain the local null controllability for the problem \eqref{pbNL}. The proof is based on a meticulous inverse function argument, as specified later on.

\subsection{Functional Spaces}

The remainder of this section is devoted to a brief explanation about the most important strategies to prove Theorem \ref{th-nonlocal}.
At this point, \textit{Lyusternik's Inverse Mapping Theorem} (see \cite{alekseev1987optimal,fursikov1996}, for instance) is our main tool. Let us recall its statement:
\begin{teorema}[Lyusternik]\label{lyusternik}
	Let $E$ and $F$ be two Banach spaces, consider $H\in C^1 (E,F)$ and put $\eta_0=H(0)$. If $H'(0)\in \mathcal L (E,F)$ is onto, then there exist $r>0$ and $\tilde{H}:B_r(\eta_0)\subset F\to E$ such that
	\[		H(\tilde{H}(\xi))=\xi,\ \forall \xi\in B_r(\eta_0),\]
	which means that $\tilde{H}$ is a right inverse of $H$ in $B_{r} (\eta _0 )$. In addition, there exists $K>0$ such that
	\[\n{\tilde{H}(\xi)}{E}\leq K\n{\xi-\eta_0}{F}, \forall \xi\in B_r(\eta_0).  \]
\end{teorema}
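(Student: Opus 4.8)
The plan is to solve the equation $H(x)=\xi$ for $x$ near $0\in E$, for every $\xi$ in a small ball around $\eta_0=H(0)$, by a Newton-type successive approximation scheme (the Lyusternik--Graves iteration), and then to \emph{define} $\tilde{H}(\xi)$ to be the limit this scheme produces. Writing $\Lambda:=H'(0)$, the first ingredient is purely linear: since $\Lambda\in\mathcal L(E,F)$ is onto, the open mapping theorem provides a constant $c>0$ such that for every $y\in F$ there exists $x\in E$ with $\Lambda x=y$ and $\|x\|_E\le c\|y\|_F$. I stress that I do \emph{not} assume a bounded \emph{linear} right inverse of $\Lambda$ (which may fail to exist when $\ker\Lambda$ is not complemented); instead I fix, once and for all, a selection $y\mapsto x$ realizing the above bound, and use it to generate the increments of the iteration.

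Next I would exploit the $C^1$ regularity. By continuity of $H'$ at $0$, choose $\rho>0$ so that $\|H'(x)-\Lambda\|_{\mathcal L(E,F)}\le \tfrac{1}{2c}$ whenever $\|x\|_E\le\rho$, and set $r:=\rho/(2c)$. Fix $\xi$ with $\|\xi-\eta_0\|_F<r$, put $x_0=0$, and define inductively $x_{n+1}=x_n+h_n$, where $h_n\in E$ is chosen, via the selection above, so that $\Lambda h_n=\xi-H(x_n)$ and $\|h_n\|_E\le c\|\xi-H(x_n)\|_F$. The key computation is the estimate of the residual: since $\xi-H(x_n)=\Lambda h_n$, the fundamental theorem of calculus gives $\xi-H(x_{n+1})=\int_0^1\bigl[\Lambda-H'(x_n+th_n)\bigr]h_n\,dt$, whence $\|\xi-H(x_{n+1})\|_F\le \tfrac{1}{2c}\,\|h_n\|_E\le \tfrac12\|\xi-H(x_n)\|_F$, provided $x_n$ and $x_n+th_n$ stay in $\overline{B_\rho(0)}$. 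Thus the residuals decay geometrically, $\|\xi-H(x_n)\|_F\le 2^{-n}\|\xi-\eta_0\|_F$, and correspondingly $\|h_n\|_E\le c\,2^{-n}\|\xi-\eta_0\|_F$.

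Because $E$ is complete and $\sum_n\|h_n\|_E\le 2c\|\xi-\eta_0\|_F$, the partial sums $x_n=\sum_{k<n}h_k$ converge to some $x_\ast\in E$ with $\|x_\ast\|_E\le 2c\|\xi-\eta_0\|_F$; the same bound yields $\|x_n\|_E\le 2c\,r=\rho$, so every iterate indeed remains in the ball where the residual estimate is valid, which closes the induction. Passing to the limit and using continuity of $H$ gives $H(x_\ast)=\lim_n H(x_n)=\xi$. Setting $\tilde{H}(\xi):=x_\ast$ (and $\tilde{H}(\eta_0)=0$) then produces a right inverse on $B_r(\eta_0)$ satisfying $\|\tilde{H}(\xi)\|_E\le K\|\xi-\eta_0\|_F$ with $K:=2c$, as required. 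The main obstacle, and the reason a naive application of the contraction mapping theorem to $x\mapsto x-R(H(x)-\xi)$ does not immediately succeed, is precisely the possible absence of a continuous linear right inverse $R$ of $\Lambda$ in a general Banach space; the Graves iteration with a fixed norm-controlled selection of increments is what replaces it, and the delicate point is to keep all iterates inside $\overline{B_\rho(0)}$ by calibrating $r$ against the modulus of continuity of $H'$.
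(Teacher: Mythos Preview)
Your argument is correct and is the classical Lyusternik--Graves iteration: the surjectivity of $\Lambda=H'(0)$ together with the open mapping theorem furnishes a norm-controlled (in general nonlinear) selection of preimages, the $C^1$ hypothesis yields the contraction of residuals, and the choice $r=\rho/(2c)$ keeps all iterates in the ball where the contraction estimate applies. The induction closes exactly as you describe, and the bound $\|\tilde{H}(\xi)\|_E\le 2c\,\|\xi-\eta_0\|_F$ follows from summing the geometric series.

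There is nothing to compare with in the paper itself: Theorem~\ref{lyusternik} is stated there as a classical tool and is not proved; the authors simply cite \cite{alekseev1987optimal,fursikov1996}. Your write-up supplies precisely the standard proof one finds in those references, and your remark about why a naive contraction argument with a bounded linear right inverse may fail (lack of a complemented kernel) is a pertinent observation that justifies the use of the selection-based iteration.
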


To be more precise, let us indicate how the proof of Theorem \ref{th-nonlocal} can be seen as an application of Theorem $\ref{lyusternik}$. Even though we have not set the desired Hilbert spaces $E$ and $F$ yet, let us put
\begin{equation}\label{H}
H (u,h) = (H_{1} (u,h), H_2(u,h)),
\end{equation}
where
\[ H_{1} (u,h) := u_t-\ell \left(\int_0^1u \   \right)\left(a u_x \right)_x -f\chi_\omega\ \  \text{ and }\ \  H_2(u,h):=u(0,\cdot).\]
We should notice that, for $u_0 \in H_{\alpha}^{1}$, the first and the last relations in \eqref{pbNL} are satisfied if, and only if, there exists $(u,h)\in E$ solving
\[
H(u,h) = (0,u_0).
\]
From this point, we realize that, among other properties, $E$ and $F$ must be built:

\begin{itemize}
	\item considering the boundary conditions mentioned in \eqref{pbNL};
	\item having some imposition on its elements assuring that $u(\cdot , T)\equiv 0$. It is done having in mind some modified weights which come from \eqref{carlA}. 
 {We remark that these new weights}  {exponentially explode at} $t=T$;

\item having in mind that we want $H^{\prime} (0,0)\in \mathcal L(E,F)$ to be onto.

 In fact, we can see that
	\[H'(0,0)(u,h)=(u_t-\ell(0)(au_x)_x-f\chi_\omega,u(0)).\]
	Recalling we have assumed that $\ell(0)=1$, $H^{\prime} (0,0)\in \mathcal L(E,F)$ is onto if, and only if, given any $(g,u_0 )\in F$, the linear system 
	\begin{equation}\label{pb-lin2}
	\left\{\begin{array}{ll}
	u_t-\left(x^\alpha u_x \right)_x=f\chi_\omega + g, & (x,t)\in Q; \\
	u(1,t)=(x^\alpha u_x)(0,t)=0,& \text{ in } (0,T),\\
	u(x,0)=u_0(x),& x\in (0,1),
	\end{array}\right.
	\end{equation}
	is globally null-controllable at $T>0$, where $f\in L^2(\omega\times (0,T))$ is the control function. Hence, it seems that $E$ should contain some information involving the well-posedness (and additional regularity) of the linear system \eqref{pb-lin2}.   
\end{itemize}
From now on, we will be focused on explicitly describing the spaces $E$ and $F$, as well as, their hilbertian norms. To do so, we consider the useful notation below.

\begin{definicao}
	Let $\delta =\delta(x,t)$ and $f=f(x,t)$ be two real-valued measurable functions  defined in $Q$, where $\delta$ is non-negative. We say that  \it{$f$ belongs to $L^2(Q;\delta)$} if $\sqrt{\delta} f\in L^2(Q)$. Moreover, the natural norm of $L^2(Q;\delta )$ will be denoted by $\n{\cdot}{\delta}$, that is, 
	\[\n{f}{\delta}=\left(\intq \delta f^2\dd x \dd t\right)^{1/2}\]
for each $f\in L^{2} (Q;\delta )$.
\end{definicao}

 In order to prove the global null-controllability for the linearized system \eqref{pb-lin2}, we  first need to establish a Carleman estimate with new weight functions that do not vanish at $t=0$. Namely, consider a function $m\in C^\infty([0,T])$ satisfying
\[\left\{\begin{array}{ll}
m(t)\geq t^4(T-t)^4, & t\in (0, T/2];\\
m(t)= t^4(T-t)^4, & t\in \left[T/2,T\right];\\
m(0)>  0, &
\end{array}
\right.\]
and define	
\begin{equation}\label{functions2} 
\tau(t):=\frac{1}{m(t)},\ \zeta (x,t):=\tau(t)e^{\lambda(1+\eta(x))} \mbox{ and } 
A(x,t):=\tau (t)\left(e^{2\lambda}-e^{\lambda(1+\eta(x))}\right),
\end{equation}
where $(t,x)\in [0,T)\times [0,1]$ (see Remark $\ref{gotozero}$).

Let us note that the adjoint system associated to \eqref{pb-lin2} is
     \begin{equation} \label{adj-jlr}
\begin{cases}
-v_t-(x^\alpha v_x)_x =  h  & \text{in }  Q,\\
v(1,t)= (x^\alpha v_x)(0,t)=0  &  \text{in }   (0,T),\\ 
v(x,T)= v_T(x)   & \text{in }   (0,1),  
\end{cases}
\end{equation}
where $h\in L^2(Q)$ and $v_T\in L^2(0,1)$. Next, we state a very convenient Carleman estimate verified by any solution of \eqref{adj-jlr}. 
    
    \begin{proposicao}\label{prop3.2}
	Assuming \eqref{geohip}, there exist $C>0$, $\lambda_0 >0$ and $s_0 >0$ such that, for any $s\geq s_0$, $\lambda\geq \lambda_0$ and $v_{_T}\in L^2(\dom)$, the corresponding solution $v$ to \eqref{adj-jlr} satisfies	
	\begin{multline}
	    	\G{v} \\
	\leq C\left(\intq e^{-2sA}|h|^2\dd x \dd t  +s^3\lambda^4\intw e^{-2sA}\zeta^{6}|v|^2\dd x \dd t   \right).\label{carlA}
		\end{multline}
\end{proposicao}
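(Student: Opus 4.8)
\textbf{Proof strategy for Proposition \ref{prop3.2}.}

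The plan is to deduce the estimate \eqref{carlA} from the already-established Carleman estimate \eqref{carleman1} of Theorem \ref{theo1}, applied to the system \eqref{adj-jlr}, which is precisely \eqref{prob0'} in the special case $b_0=b_1=0$ together with the time-reversal $t\mapsto T-t$. So the only real work is to transfer the inequality from the weights $(\sigma,\xi)$, which blow up at both $t=0$ and $t=T$, to the weights $(A,\zeta)$, which are finite (and strictly positive) at $t=0$ but still blow up at $t=T$. The mechanism for this is a standard two-region argument glued together by the energy dissipation of the parabolic equation.

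First I would split the time interval as $[0,T]=[0,T/2]\cup[T/2,T]$. On $[T/2,T]$ the function $m(t)$ coincides with $t^4(T-t)^4$, hence $\tau$, $\zeta$ and $A$ agree (up to harmless fixed multiplicative constants coming from the definitions \eqref{functions} versus \eqref{functions2}, i.e. the choice of $2|\eta|_\infty$ versus $1$ in the exponents and the power of $\theta$ versus $\tau$) with $\theta$, $\xi$ and $\sigma$; more precisely one checks $\zeta\asymp\xi$ and $e^{-2sA}\asymp e^{-2s\sigma}$ with constants depending only on $\lambda$, $T$, so the left-hand integrand of \eqref{carlA} over $(T/2,T)$ is controlled by the left-hand side of \eqref{carleman1}, and the right-hand observation and source terms of \eqref{carleman1} over $(T/2,T)$ are in turn bounded by those of \eqref{carlA}. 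Thus Theorem \ref{theo1} directly yields
\[
\int_{T/2}^T\!\!\int_0^1 e^{-2sA}\Bigl(s\lambda^2\zeta x^\alpha|v_x|^2+s^3\lambda^4\zeta^2|v|^2\Bigr)\dd x\dd t
\le C\Bigl(\intq e^{-2sA}|h|^2+s^3\lambda^4\intw e^{-2sA}\zeta^{6}|v|^2\Bigr),
\]
where I have also used $\zeta^2\le C\zeta^6$ on $\omega_T$ after noting $\zeta$ is bounded below away from $t=T$ is false near $t=T$; rather on $\omega_T$ one compares the two observation terms by absorbing $\xi^3 e^{-2s\sigma}$ into $\zeta^6 e^{-2sA}$ using that on the support these weights are comparable and $\zeta\to\infty$, so $\xi^3 e^{-2s\sigma}\le C\,\zeta^6 e^{-2sA}$ holds for $s$ large. (This last comparison is where one must be slightly careful, since the powers $3$ and $6$ differ; the point is that near $t=T$ both sides are killed by the exponential, and away from $t=T$ the weights are bounded, so a fixed constant suffices.)

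Next, to control the missing piece $\int_0^{T/2}\int_0^1 e^{-2sA}(\cdots)\dd x\dd t$, I would use a classical energy/dissipation estimate for \eqref{adj-jlr}. Multiplying the equation by $v$ and integrating on $(0,1)$ gives, as in the proof of the observability Proposition following Theorem \ref{theo1}, the inequality $-\tfrac12\tfrac{d}{dt}|v(\cdot,t)|_2^2+\tfrac12\int_0^1 x^\alpha|v_x|^2\,dx\le 0$; combined with $|v(\cdot,0)|_2^2\le|v(\cdot,t)|_2^2$ and integration over a suitable subinterval (say $(T/2,3T/4)$) this bounds $\sup_{[0,T/2]}|v(\cdot,t)|_2^2$ and $\int_0^{T/2}\int_0^1 x^\alpha|v_x|^2$ by $C\int_{T/2}^{3T/4}\int_0^1|v|^2\,dx\,dt$, which is already absorbed by the right-hand side of \eqref{carlA} via the portion of the estimate just obtained on $[T/2,T]$ (using that $e^{-2sA}\zeta^2$ is bounded below on $(T/2,3T/4)$). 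Since on $[0,T/2]$ the weights $\tau$, $\zeta$, $e^{-2sA}$ are all bounded above and below by positive constants (this is exactly why $m(0)>0$ was imposed — see Remark \ref{gotozero}), the weighted integral $\int_0^{T/2}\int_0^1 e^{-2sA}(s\lambda^2\zeta x^\alpha|v_x|^2+s^3\lambda^4\zeta^2|v|^2)$ is comparable to the unweighted $\int_0^{T/2}\int_0^1(x^\alpha|v_x|^2+|v|^2)$ up to powers of $s$ and $\lambda$, hence is dominated. Adding the two regions and readjusting constants gives \eqref{carlA}.

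\textbf{Main obstacle.} The routine parts are the energy estimate and the region $[0,T/2]$ where everything is nondegenerate in time. The delicate point is matching the exponents and weight powers in the region $[T/2,T]$ and on $\omega_T$: one must verify that the observation term $s^3\lambda^4\xi^3 e^{-2s\sigma}$ produced by Theorem \ref{theo1} is genuinely bounded by $s^3\lambda^4\zeta^6 e^{-2sA}$ (note the $\zeta^{6}$, not $\zeta^3$), which requires checking that the extra factor $\zeta^3$ and the difference between $\sigma$ and $A$ are harmless near $t=T$ because of the exponential decay, and on the rest of $Q$ because all weights are then bounded; and similarly that the left-hand integrand of \eqref{carlA}, which carries only $\zeta$ and $\zeta^2$, is a lower-order version of the $\xi$, $\xi^3$ terms in \eqref{carleman1}. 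I expect this bookkeeping, rather than any new analytic idea, to be the crux, and it is handled by elementary but careful comparison of the explicit functions in \eqref{functions} and \eqref{functions2}.
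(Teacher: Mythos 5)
Your proposal is correct and is essentially the paper's own argument: the paper proves Proposition \ref{prop3.2} by simply invoking Theorem \ref{theo1} and citing the standard weight-modification procedure of \cite[Proposition 4]{jrl2020EECT}, which is precisely the split into $[0,T/2]$ (where the new weights are bounded and an energy/dissipation estimate transfers the information to an interior time slab) and $[T/2,T]$ (where $\tau=\theta$, so $\zeta=\xi$ and $A=\sigma$, and \eqref{carleman1} applies directly, with $\zeta^{2}\leq C\zeta^{3}$ and $\xi^{3}\leq C\zeta^{6}$ handled by the lower bound $\zeta\geq c>0$). The only slip is the remark about a time reversal: \eqref{adj-jlr} is \eqref{prob0'} with $b_0=b_1=0$ and $h$ replaced by $-h$, no change of time variable being needed, but this does not affect the argument.
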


The obtainment of \eqref{carlA} is a consequence of \eqref{carleman1}, by following the same steps detailed in \cite[Proposition 4]{jrl2020EECT}.

The factors multiplying $v$ in \eqref{carlA} inspire the definition of the new weight functions 
\begin{equation}\label{novospesos}
\rho_i=e^{sA}\zeta^{-i}, \emph{ where } i=0,1,2,3.
\end{equation}
As a matter of fact, $\rhoi{1}^{-2}$ and $\rhoi{3}^{-2}$ appears in the two integrals involving $v$, while $\rhoi{2}$ was chosen  in order to satisfies $\rhoi{{2}}^2=\rhoi{1}\rhoi{{3}}.$ Besides, we have $\rhoi{3}\leq C \rhoi{2}\leq C\rhoi{1}\leq C\rhoi{0}$ and, since $\rhoi{i}\geq C_T>0$ for all $i=1,2,3$, we also know that $L^2(Q;\rhoi{i}^2)\hookrightarrow L^2(Q)$. Here, for completeness, let us state the expected observability inequality which can be derived from \eqref{carlA}.

\begin{corolario}\label{observabiliy2} 
    	Assuming \eqref{geohip},
    	there exist $C>0$, $\lambda_0 >0$ and $s_0 >0$ with the following property: given $s\geq s_0$, $\lambda\geq \lambda_0$ and $v_{_T}\in L^2(\dom)$, then the corresponding solution $v$ to \eqref{adj-jlr},  with $h\equiv0$,  satisfies
	\begin{equation}\label{obs.ineq}
	|v(\cdot,0)|_2^2\leq Cs^3\lambda^{4}\intw \rhoi{3}^{-2}|v|^{2}\dd x \dd t.
	\end{equation}
\end{corolario}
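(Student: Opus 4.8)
The goal is to deduce the observability inequality \eqref{obs.ineq} from the Carleman estimate \eqref{carlA} applied to a solution $v$ of \eqref{adj-jlr} with $h\equiv 0$. The strategy is the standard one: combine the Carleman bound, which controls a global weighted integral of $|v|^2$ by a localized one over $\omega_T$, with an energy (dissipation) estimate that transfers information on $v$ at intermediate times back to $v(\cdot,0)$. First I would take $h\equiv 0$ in \eqref{carlA}, so that the left-hand side dominates $s^3\lambda^4\intq e^{-2sA}\zeta^2|v|^2\,dx\,dt$ and hence
\[
s^3\lambda^4\intq e^{-2sA}\zeta^{2}|v|^2\dd x \dd t \leq C\,s^3\lambda^4\intw e^{-2sA}\zeta^{6}|v|^2\dd x \dd t.
\]
(One must be slightly careful with the powers of $\zeta$: since $\zeta\geq C_T>0$ one has $\zeta^2\geq C\zeta^{6}\cdot(\text{something})$ only on the portion of $Q$ where $t$ is bounded away from the endpoints; the point is that on the subinterval $[T/4,3T/4]$ the weights $e^{-2sA}$ and $\zeta$ are bounded above and below by positive constants depending only on $s,\lambda,T$, so there $e^{-2sA}\zeta^2 \geq c(s,\lambda,T)>0$.)

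\textbf{Energy estimate.} Next I would establish the backward dissipation inequality for \eqref{adj-jlr} with $h\equiv 0$. Multiplying the equation $-v_t-(x^\alpha v_x)_x=0$ by $v$ and integrating over $(0,1)$ gives, using the boundary conditions $v(1,t)=0$ and $(x^\alpha v_x)(0,t)=0$ to kill the boundary terms,
\[
-\frac12\frac{d}{dt}|v(\cdot,t)|_2^2 + \int_0^1 x^\alpha |v_x|^2\,dx = 0,
\]
so that $\frac{d}{dt}|v(\cdot,t)|_2^2 \geq 0$, i.e. $t\mapsto |v(\cdot,t)|_2^2$ is nondecreasing. In particular $|v(\cdot,0)|_2^2 \leq |v(\cdot,t)|_2^2$ for every $t\in(0,T)$, hence, averaging over $t\in[T/4,3T/4]$,
\[
|v(\cdot,0)|_2^2 \leq \frac{2}{T}\int_{T/4}^{3T/4}\!\!\int_0^1 |v(x,t)|^2\,dx\,dt.
\]

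\textbf{Combining.} On the strip $[T/4,3T/4]\times[0,1]$ the function $m(t)$ is bounded above and below by positive constants, so $\tau$, $\zeta$ and $e^{-2sA}$ are all pinched between positive constants $C_1(s,\lambda,T)$ and $C_2(s,\lambda,T)$; therefore $\int_{T/4}^{3T/4}\int_0^1|v|^2 \leq C\, s^3\lambda^4\intq e^{-2sA}\zeta^2|v|^2\,dx\,dt$. Chaining the three displays yields
\[
|v(\cdot,0)|_2^2 \leq C\, s^3\lambda^4\intq e^{-2sA}\zeta^{2}|v|^2\dd x \dd t \leq C\, s^3\lambda^4\intw e^{-2sA}\zeta^{6}|v|^2\dd x \dd t,
\]
and since $\rhoi{3}^{-2}=e^{-2sA}\zeta^{6}$ by \eqref{novospesos}, the right-hand side is exactly $C s^3\lambda^4\intw \rhoi{3}^{-2}|v|^2\,dx\,dt$, which is \eqref{obs.ineq}.

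\textbf{Main obstacle.} The only genuinely delicate point is the bookkeeping of the weight functions near $t=T$, where $\tau$, $\zeta$ blow up and $e^{-2sA}\to 0$: one must make sure that the step "left-hand side of \eqref{carlA} dominates a good multiple of $\int_{T/4}^{3T/4}\int_0^1|v|^2$" really only uses the behavior of the weights on the compact subinterval $[T/4,3T/4]$, where everything is harmless, and does not accidentally require control near the endpoints. Everything else — the energy identity, the monotonicity of the $L^2$ norm, the averaging — is routine and parallels the proof of the Observability Proposition \eqref{obs} given earlier in the paper for system \eqref{prob0'}.
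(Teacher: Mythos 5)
Your proof is correct and follows essentially the same route the paper intends: the paper leaves this corollary unproved, noting only that it ``can be derived from \eqref{carlA}'', and your combination of the monotonicity of $t\mapsto|v(\cdot,t)|_2^2$ (from the energy identity with $h\equiv 0$), averaging over $(T/4,3T/4)$, and the Carleman estimate is exactly the argument the paper gives for the analogous observability inequality \eqref{obs}. As in that earlier proof, the constant absorbed when comparing $\int_{T/4}^{3T/4}\int_0^1|v|^2$ with the weighted integral over the strip depends on the fixed values of $s$ and $\lambda$, which is harmless here.
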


\begin{observacao}\label{gotozero}
In \eqref{functions2}, we have redefined the functions given in \eqref{functions}, replacing $\theta = \theta (t)$, which satisfies $\displaystyle\lim_{t\to 0^{+}} \theta (t) = +\infty$, by $\tau = \tau (t)$ fulfilling $\displaystyle\lim_{t\to 0^{+}} \tau (t) = \tau (0) >0$. That is a crucial point in order to guarantee that \eqref{pbNL} is locally null-controllable at $T>0$, as stated in Theorem $\ref{th-nonlocal}$. Let us clarify this point: in fact, the definition of each $\rho_i$, with $i\in \{1,2,3\}$, is based on those weights which appear in \eqref{carlA}, however, it comes from \eqref{functions2}  that $\rho_1(t)\to +\infty$, as $t\to T^{-}$, and $\rho_{1} (0) >0$ (since $m(0)>0$). Because of that, $u(x,T)=0$ for any $u\in L^2(Q;\rhoi{1}^2)$. Hence, it seems reasonable to require that, if $(u,h)\in E$, then u belongs to $L^2(Q;\rhoi{1}^2)$. 	
\end{observacao}

Finally, we are ready to define $E$ and $F$. Let us consider 
\[
\mathcal{U}:= H^1(0,T;L^2(0,1))\cap L^2(0,T;H_\alpha^2)\cap C^0([0,T];H_\alpha^1)\]
and put $\mathcal{L}u:=u_t-(x^\alpha u_x)_x$ for each $u\in \mathcal  U$. Under all these previous notations, 
we set the Hilbert spaces 
\begin{equation*}
E:=\bigg\{  (u,h)\in \mathcal{U} \times L^{2} (\omega_T ;\rhoi{3}^{2} ):   
u,  (\mathcal L u-f\chi_{\omega} ) \in L^2 (Q;\rhoi{1}^2) \bigg\},
\end{equation*}
and
\[
F:= L^{2} (Q;\rhoi{1}^{2}) \times H_{\alpha}^{1} ,\]
equipped with the norms
\begin{equation*}
\|(u,h)\|_E:= \left( \| u \| _{\rhoi{1}^{2}}^{2} + \| h \| _{\rhoi{3}^{2}}^{2} +  \| \mathcal L u - f\chi _{\omega} \| _{\rhoi{1}^{2}}^{2} + \| u(0,\cdot ) \| _{H_{\alpha}^{1}}^{2} \right) ^{1/2} ,
\end{equation*}
and
\[
\| (g,v) \|_{F} :=\left( \| g \|_{\rhoi{1}^{2}}^{2} + \| v \|_{H_{\alpha}^{1}}^{2} \right) ^{1/2} ,\]
respectively. The remainder of this work is devoted to check that the mapping $H:E\longrightarrow F$ acccomplishes everything which is required in order to apply Theorem \ref{lyusternik}.


 \subsection{Global null-controllability for the linearized system}
    The goal of this section is to prove a global  null-controllability result for the linear system \eqref{pb-lin2} and establish some important additional estimates. As previously discussed, the global  null-controllability  will guarantee that $H'(0,0)$ is surjective,   which is required by \textit{Lyusternik's Theorem}, and the additional estimates will allow us to prove that $H$ is well defined and of class $C^1$. As the first step here, let us define what we mean by a solution to the problem \eqref{pb-lin2}. 
    
 
 \begin{definicao}\label{defin-transp}
     Given $u_0\in H^1_\alpha$, $f\in L^2(\domw)$ and $g\in L^2(Q)$, we say that $u\in L^2(Q)$ is a \textit{solution by transposition} of \eqref{pb-lin2} if, for each $(h,v_T)\in L^2(Q)\times L^2(0,1)$, we have 
     \[\intq uh \dd x\dd t = \into u_0v(x,0)\dd x +\intq (f1_\omega+g)v\dd x\dd t,\]
for any $v$ solution to \eqref{adj-jlr}.
 \end{definicao}

    The main result of this section is the following:	
\begin{proposicao}
\label{linearcontrol}
	Assume \eqref{geohip}. If $u_0\in H_\alpha^1$ and $g\in L^2(Q;\rhoi{1}^2)$, then there exists a control $f\in L^2(\domw; \rhoi{{3}}^2)$ to \eqref{pb-lin2}, with associated state $u\in L^2(Q;\rhoi{1}^2)$, such that
	\begin{equation*}
	    \n{u}{\rhoi{1}^2}^2+\n{f}{\rhoi{3}^2}^2\leq C\left(\n{u_0}{H^1_\alpha}^2+\n{g}{\rhoi{1}^2}^2\right).
	\end{equation*}
	In particular, it guarantees  that \eqref{pb-lin2}  is globally null-controllable.	Furthermore, we have
	\begin{equation*}
	x^{\alpha /2} u_x\in L^2(Q;\rhoi{{2}}^2),\ u_t, (x^{\alpha} u_x)_x\in L^2(Q;\rhoi{{3}}^2)
	\end{equation*}
	and there exists $C>0$ such that
	\begin{equation}\label{est-ad}
	\n{x^{\alpha /2} u_x}{\rhoi{{2}}^2}^2+\n{u_t}{\rhoi{{3}}^2}^2+\n{(x^{\alpha} u_x)_x}{\rhoi{{3}}^2}^2\leq C\left(\n{u}{\rhoi{{1}}^2}^2+\n{h\chi_{\omega }}{\rhoi{{3}}^2}^2+\n{g}{\rhoi{1}^2}^2+\n{u_0}{H^1_{\alpha}}^2\right).
	\end{equation}
\end{proposicao}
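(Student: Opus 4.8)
The strategy is a standard penalized-HUM / duality argument built on the weighted observability inequality \eqref{obs.ineq}, followed by a bootstrap to upgrade the regularity of the controlled state. First I would fix $s\geq s_0$, $\lambda\geq\lambda_0$ as provided by Proposition \ref{prop3.2} and work on the cylinder $Q$. For $\ep>0$, introduce the penalized functional
\[
J_\ep(h):=\frac12\intw \rhoi{3}^{2}|h|^2\dd x\dd t+\frac1{2\ep}|u^h(\cdot,T)|_2^2-\into u_0\,? \ \ \text{(adjoint pairing)},
\]
more precisely, following the Fursikov--Imanuvilov scheme, minimize over the adjoint states: set
\[
\widetilde J_\ep(v_T):=\frac12\,s^3\lambda^4\intw \rhoi{3}^{-2}|v|^2\dd x\dd t+\frac{\ep}{2}|v_T|_2^2+\into u_0\,v(x,0)\dd x+\intq g\,v\dd x\dd t,
\]
where $v$ solves \eqref{adj-jlr} with final datum $v_T$ and $h\equiv 0$. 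The observability inequality \eqref{obs.ineq} together with energy estimates for \eqref{adj-jlr} shows that $\widetilde J_\ep$ is continuous, strictly convex and coercive on $L^2(0,1)$, hence has a unique minimizer $v_T^\ep$. Writing the Euler--Lagrange equation and defining $f_\ep:=-s^3\lambda^4\rhoi{3}^{-2}v^\ep\chi_\omega$ (so that $f_\ep\in L^2(\domw;\rhoi{3}^2)$ with $\n{f_\ep}{\rhoi{3}^2}^2=s^3\lambda^4\intw\rhoi{3}^{-2}|v^\ep|^2$), the corresponding solution-by-transposition $u^\ep$ of \eqref{pb-lin2} satisfies $u^\ep(\cdot,T)=-\ep v_T^\ep$.

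The key quantitative step is the uniform bound. Testing the Euler--Lagrange identity against $v_T^\ep$ itself yields
\[
s^3\lambda^4\intw\rhoi{3}^{-2}|v^\ep|^2\dd x\dd t+\ep|v_T^\ep|_2^2=-\into u_0v^\ep(x,0)\dd x-\intq g\,v^\ep\dd x\dd t .
\]
Bound the right-hand side by Cauchy--Schwarz: $\left|\into u_0v^\ep(x,0)\right|\le \n{u_0}{H^1_\alpha}|v^\ep(\cdot,0)|_2$ and, since $\rhoi{1}^{-1}\le C$, $\left|\intq g v^\ep\right|\le \n{g}{\rhoi{1}^2}\left(\intq\rhoi{1}^{-2}|v^\ep|^2\right)^{1/2}\le C\n{g}{\rhoi{1}^2}\left(s^3\lambda^4\intw\rhoi{3}^{-2}|v^\ep|^2\right)^{1/2}$ after invoking \eqref{carlA} (with $h\equiv 0$) to absorb the full space-time integral of $|v^\ep|^2$ by the observation term. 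Combining with \eqref{obs.ineq} and a Young inequality gives
\[
s^3\lambda^4\intw\rhoi{3}^{-2}|v^\ep|^2\dd x\dd t+\ep|v_T^\ep|_2^2\le C\left(\n{u_0}{H^1_\alpha}^2+\n{g}{\rhoi{1}^2}^2\right),
\]
uniformly in $\ep$. Hence $\n{f_\ep}{\rhoi{3}^2}$ is bounded, and to bound $\n{u^\ep}{\rhoi{1}^2}$ I would use the dual characterization: $\n{u^\ep}{\rhoi{1}^2}=\sup\{\intq u^\ep\phi:\n{\rhoi{1}^{-1}\phi}{2}\le1\}$, where for each such $\phi$ the transposition identity bounds $\intq u^\ep\phi$ by $\n{u_0}{H^1_\alpha}|w(\cdot,0)|_2+\n{f_\ep}{\rhoi{3}^2}(\intw\rhoi{3}^{-2}|w|^2)^{1/2}+\n{g}{\rhoi{1}^2}(\intq\rhoi{1}^{-2}|w|^2)^{1/2}$ with $w$ the solution of \eqref{adj-jlr} with source $\rhoi{1}^{-2}\phi$; applying \eqref{carlA} to $w$ controls every factor by $\n{\rhoi{1}^{-1}\phi}{2}$. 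Passing to the weak limit $\ep\to0^+$ along a subsequence, $f_\ep\rightharpoonup f$ in $L^2(\domw;\rhoi{3}^2)$, $u^\ep\rightharpoonup u$ in $L^2(Q;\rhoi{1}^2)$, and $u^\ep(\cdot,T)=-\ep v_T^\ep$; since $\ep|v_T^\ep|_2^2$ is bounded we get $u^\ep(\cdot,T)\to0$, so the limit pair solves \eqref{pb-lin2} with $u(\cdot,T)=0$ and obeys the stated energy estimate. This establishes the global null-controllability part.

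For the additional regularity \eqref{est-ad}, the plan is a weighted parabolic bootstrap on the equation $u_t-(x^\alpha u_x)_x=f\chi_\omega+g$. Multiply by $\rhoi{2}^{2}u$ (respectively by suitable weighted test functions) and integrate by parts on $Q$; since $\rhoi{2}^{2}=\rhoi{1}\rhoi{3}$ and $\rhoi{0},\dots,\rhoi{3}$ are comparable up to constants with $(\rhoi{2}^{2})_t$ and $(\rhoi{2}^{2})_x$ controlled by $\rhoi{1}^{-2}$-type weights (a direct computation from \eqref{functions2}), one obtains $\n{x^{\alpha/2}u_x}{\rhoi{2}^2}^2\le C(\n{u}{\rhoi{1}^2}^2+\n{f\chi_\omega}{\rhoi{3}^2}^2+\n{g}{\rhoi{1}^2}^2+\n{u_0}{H^1_\alpha}^2)$. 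Then multiply the equation by $\rhoi{3}^{2}u_t$ and integrate: the term $(x^\alpha u_x)_x\cdot\rhoi{3}^{2}u_t$ integrates by parts to produce $\frac{d}{dt}$ of a weighted $x^{\alpha/2}u_x$ energy plus commutator terms absorbed by the previous estimate, giving $\n{u_t}{\rhoi{3}^2}^2$; finally $\n{(x^\alpha u_x)_x}{\rhoi{3}^2}\le\n{u_t}{\rhoi{3}^2}+\n{f\chi_\omega}{\rhoi{3}^2}+\n{g}{\rhoi{3}^2}$ directly from the PDE (using $\rhoi{3}\le C\rhoi{1}$ for the $g$ term). The main obstacle I anticipate is precisely the careful handling of the time-derivatives of the weights near $t=T$, where $\rhoi{i}\to\infty$: one must verify that the blow-up rates of $(\rhoi{i}^{2})_t$ are dominated by a fixed negative power of $\zeta$ so that all commutator integrals remain finite and absorbable — this is where the specific construction \eqref{functions2} and the exponent relation $\rhoi{2}^2=\rhoi{1}\rhoi{3}$ are essential. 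All such computations mirror those in \cite[Proposition 4]{jrl2020EECT} and \cite{CDJL2021}, so I would invoke those references for the routine bookkeeping.
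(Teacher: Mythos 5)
Your first half takes a genuinely different route from the paper: you run a penalized HUM scheme over the final data of the adjoint system and then pass to the limit, whereas the paper uses the Fursikov--Imanuvilov variational construction, i.e.\ Lax--Milgram applied to the bilinear form $a(w_1,w_2)=\intq\rhoi{0}^{-2}\mathcal{L}^{\ast}w_1\,\mathcal{L}^{\ast}w_2\dd x\dd t+\intq\rhoi{3}^{-2}w_1w_21_\omega\dd x\dd t$ on a completion $P_\alpha$ of smooth test functions, after which the control and state are read off explicitly as $f=-\rhoi{3}^{-2}\hat v1_\omega$ and $u=\rhoi{0}^{-2}\mathcal{L}^{\ast}\hat v$. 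The advantage of the paper's route is that $\n{u}{\rhoi{1}^2}^2+\n{f}{\rhoi{3}^2}^2$ is, by construction, controlled by $a(\hat v,\hat v)$, which Lax--Milgram bounds directly; no duality step for the state is needed. Your bound on $\n{f_\ep}{\rhoi{3}^2}$ is sound, since $v^\ep$ solves the adjoint system with $h\equiv 0$, so \eqref{carlA} and \eqref{obs.ineq} apply to it as you use them.

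The gap is in your duality bound for $\n{u^\ep}{\rhoi{1}^2}$, which is the crucial estimate (membership in $L^2(Q;\rhoi{1}^2)$ is what forces $u(\cdot,T)=0$). There, $w$ solves \eqref{adj-jlr} with source $\phi$, and the transposition identity produces the factor $\left(\intw\rhoi{3}^{-2}|w|^2\dd x\dd t\right)^{1/2}$ multiplying $\n{f_\ep}{\rhoi{3}^2}$. You assert that applying \eqref{carlA} to $w$ controls every factor by $\n{\rhoi{1}^{-1}\phi}{2}$, but $\intw\rhoi{3}^{-2}|w|^2$ is exactly the observation term on the \emph{right-hand side} of \eqref{carlA}; the left-hand side only controls the weight $\rhoi{1}^{-2}=e^{-2sA}\zeta^{2}$, and since $\rhoi{3}^{-2}=\rhoi{1}^{-2}\zeta^{4}$ with $\zeta\to+\infty$ as $t\to T^{-}$, this factor is not absorbed. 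To close the step you would need a separate weighted energy estimate for $w$ (using $w(\cdot,T)=0$ and the decay of $e^{-2sA}$ near $t=T$), or simply adopt the paper's construction, where the issue does not arise. Two minor points: the terms $|v^\ep(\cdot,0)|_2$ and $|w(\cdot,0)|_2$ require an observability-type inequality with nonzero source (Corollary \ref{observabiliy2} is stated only for $h\equiv0$), and your identity for $\n{f_\ep}{\rhoi{3}^2}^2$ should carry the factor $(s^3\lambda^4)^2$, though neither affects the structure. Your second half --- multiplying by $\rhoi{2}^2u$ and $\rhoi{3}^2u_t$ and recovering $(x^\alpha u_x)_x$ from the equation --- is essentially the paper's Lemmas \ref{ad1} and \ref{ad2}.
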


\begin{proof}  Let us define the set
\[P_{0\alpha}=\{ w\in C^2(\bar{Q});\ w(1,t)=x^\alpha w_x(0,t)=0,\ t\in (0,T)\}.\]
Recalling the definition of $\mathcal{L}$, we can see that its formal adjoint is given by $\mathcal{L}^\ast v=-v_t-(x^\alpha v_x)_x$. Hence, analyzing the right-hand side of $\eqref{carlA}$, we can define the following symmetric, positive defined bilinear form
\begin{equation*}
    a(w_1,w_2)=\intq \rhoi{0}^{-2}\mathcal{L}^\ast w_1\mathcal{L}^\ast w_2\dd x\dd t+ \intq \rhoi{3}^{-2}w_1w_21_\omega \dd x \dd t, \ \forall w_1,w_2\in P_{0\alpha}.
    \end{equation*}
Thus, let us denote by $P_\alpha$ the completion of $P_{0\alpha}$ with respect to the inner product defined by $a$. Hence,  $P_\alpha$ is a Hilbert space with norm given by $\n{v}{P_\alpha}=a(v,v)^{1/2}$.

Now, let us define the continuous linear functional $L: L^2(Q)\longrightarrow \R$ given by \[Lv=\into u_0v(x,0)\dd x+\intq gv \dd x\dd t. \]

In this case, Lax-Milgram Theorem yields $\hat{v}\in P_\alpha$ such that
\[a(\hat{v},v)=Lv,\ \forall v\in P_\alpha,\]
that is,
\begin{equation*}
    \intq \rhoi{0}^{-2}\mathcal{L}^\ast \hat{v}\mathcal{L}^\ast v_2\dd x\dd t+ \intq \rhoi{3}^{-2}\hat{v}v1_\omega \dd x \dd t=\into u_0v(x,0)\dd x+\intq gv \dd x\dd t, \ \forall v\in P_\alpha.
\end{equation*}

According to Definition \ref{defin-transp}, it means that $f:=-\rhoi{3}^{-2}\hat{v}1_\omega$ is a control and $u:=\rhoi{0}^{-2}\mathcal{L}^\ast \hat{v}$ the associated state to the problem \eqref{pb-lin2}. Indeed, for any $(h,v_T)\in L^2(Q)\times L^2(0,1)$, if $v$ is a solution to \eqref{adj-jlr}, then
\begin{equation*}
    \intq uh\dd x\dd t= \into u_0v(x,0)\dd x+\intq (f1_\omega +g )v\dd x \dd t.
\end{equation*}

Furthermore, from Carleman and observability inequalitie, given in \eqref{carlA} and \eqref{obs.ineq} respectively, we have
\begin{align*}
    \n{\hat{v}}{P_\alpha}^2& =L\hat{v}\leq \nn{u_0} \nn{\hat{v}(\cdot,0)}+\n{g}{\rhoi{1}^2}\left(\intq \rhoi{1}^{-2}\hat{v}^2\dd x\dd t\right)^{1/2}\\
    & \leq \left(\nn{u_0}^2+\n{g}{\rhoi{1}^2}^2\right)^{1/2}\left(\nn{\hat{v}(\cdot,0)}^2+ \intq \rhoi{1}^{-2}\hat{v}^2\dd x \dd t\right)^{1/2}\\
     & \leq C\left(\nn{u_0}^2+\n{g}{\rhoi{1}^2}^2\right)^{1/2}a(\hat{v},\hat{v})^{1/2}\\
     & = C\left(\nn{u_0}^2+\n{g}{\rhoi{1}^2}^2\right)^{1/2}\n{\hat{v}}{P_\alpha},
\end{align*}
whence
\begin{equation*}
    \n{\hat{v}}{P_\alpha} \leq C\left(\nn{u_0}^2+\n{g}{\rhoi{1}^2}^2\right)^{1/2}.
\end{equation*}
Using the explicit expressions $f=-\rhoi{3}^{-2}\hat{v}1_\omega$ and $u=\rhoi{0}^{-2}\mathcal{L}^\ast \hat{v}$, as well as, recalling the norm $\n{\cdot}{P_\alpha}$, we easily get
\begin{align*}
    \n{u}{\rhoi{1}^2}^2+\n{f}{\rhoi{3}^2}^2&   \leq  C\intq \rhoi{0}^2u^2\dd x \dd t+\intq \rhoi{3}^2f^2 \dd x\dd t \\
    & =\intq \rhoi{0}^{-2}|\mathcal{L}^\ast \hat{v}|^2 \dd x \dd t +\intq \rhoi{3}^{-2}\hat{v}^21_\omega \dd x \dd t\\
    & \leq C\left(\nn{u_0}^2+\n{g}{\rhoi{1}^2}^2\right),
\end{align*}
as desired. 

At this moment, we would like to say that the obtainment of \eqref{est-ad} will be left for the two subsequent lemmas.
\end{proof}

%

\begin{lema}\label{ad1}
Assume \eqref{geohip}. Given $u_0 \in H_{\alpha}^{1}$ and $g\in L^2 (Q;\rhoi{1}^2)$, if $(u,h)\in \mathcal U \times L^2 (Q_{\omega}; \rhoi{3}^2)$ is a solution to \eqref{pb-lin2}, then $x^{\alpha /2} u_x\in L^2(Q;\rhoi{{2}})$ and there exists $C>0$ such that
	\begin{equation*}
	\n{x^{\alpha /2}u_x}{\rhoi{{2}}^2}^2\leq C\left(\n{u}{\rhoi{{1}}^2}^2+\n{h\chi_{\omega }}{\rhoi{{3}}^2}^2+\n{g}{\rhoi{1}^2}^2+\n{u_0}{H^1_{\alpha}}^2\right).
	\end{equation*}
\end{lema}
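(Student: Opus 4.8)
\textbf{Proof proposal for Lemma \ref{ad1}.}

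The plan is to obtain the weighted estimate for $x^{\alpha/2}u_x$ by a standard energy computation, testing the equation against $u$ multiplied by an appropriate power of the Carleman weight. Concretely, I would multiply the equation $\mathcal L u - f\chi_\omega = g$ in \eqref{pb-lin2} by $\rhoi{2}^2 u = e^{2sA}\zeta^{-2}u$ and integrate over $(0,1)$. Integrating by parts in the term $\int_0^1 \rhoi{2}^2 u\,(x^\alpha u_x)_x\,\dd x$ produces the good term $\int_0^1 e^{2sA}\zeta^{-2} x^\alpha |u_x|^2\,\dd x$, together with a crossed term $\int_0^1 u_x u\,\partial_x(e^{2sA}\zeta^{-2})\,x^\alpha\,\dd x$ and no boundary contribution, since $u(1,t)=0$ and $(x^\alpha u_x)(0,t)=0$ kill the endpoint evaluations. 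The time derivative term $\int_0^1 \rhoi{2}^2 u\,u_t\,\dd x = \frac12\frac{d}{dt}\int_0^1 \rhoi{2}^2|u|^2\,\dd x - \frac12\int_0^1 (\rhoi{2}^2)_t |u|^2\,\dd x$ is then integrated in time over $(0,T)$.

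The key point controlling every term is the comparison of derivatives of the weights with the weights themselves: one has $|\partial_t(\rhoi{2}^2)|\leq C\,\rhoi{1}^{-2}\,(\text{something like }\zeta^{\text{power}})$ and $|\partial_x(\rhoi{2}^2)|\leq C\lambda\,\rhoi{2}^{2}\,x$, exactly the type of pointwise bounds already used in the proof of Theorem \ref{theo1} (where $|\xi\xi_t|\le C\xi^3$, $|\sigma_t|\le C\xi^2$, etc.). Using $x^{\alpha/2}\le x^{\alpha/2-?}$ is not needed; instead, after applying Young's inequality to the crossed term, the part involving $x^\alpha|u_x|^2$ is absorbed (with a small constant) into the good term, while the part involving $|u|^2$ is estimated by $C\,\rhoi{1}^{2}|u|^2$ — here one uses $\rhoi{2}\le C\rhoi{1}$ and that the extra weight factors produced by differentiating $\zeta$ are dominated by powers absorbed into the constant $C$ for $s\ge s_0$, $\lambda\ge\lambda_0$. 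Integrating the time-derivative identity over $(0,T)$, the boundary terms at $t=0$ and $t=T$ vanish (at $t=T$ because $\rhoi{2}^2(t)\to 0$ as $t\to T^-$, and at $t=0$ one bounds $\rhoi{2}^2(0)|u(\cdot,0)|^2$ by the initial datum and the already-established estimate $\n{u}{\rhoi{1}^2}^2\le C(\dots)$; alternatively one starts the test function from a smooth cut-off in time). Collecting everything and using $g\in L^2(Q;\rhoi{1}^2)$, $h\chi_\omega\in L^2(Q;\rhoi{3}^2)$ with $\rhoi{3}\le C\rhoi{2}\le C\rhoi{1}$ to absorb the source contributions, one arrives at
\[
\n{x^{\alpha/2}u_x}{\rhoi{2}^2}^2\le C\left(\n{u}{\rhoi{1}^2}^2+\n{h\chi_\omega}{\rhoi{3}^2}^2+\n{g}{\rhoi{1}^2}^2+\n{u_0}{H^1_\alpha}^2\right),
\]
as claimed.

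The main obstacle I anticipate is bookkeeping the precise powers of $\zeta$ and $\tau$ that appear when differentiating $\rhoi{2}^2=e^{2sA}\zeta^{-2}$ in $t$: since $A$ and $\zeta$ both carry the singular factor $\tau(t)$, the time derivative generates terms like $s\,\tau'\,\zeta^{-2}e^{2sA}$ and $\tau'\tau^{-1}\zeta^{-2}e^{2sA}$, and one must check these are all bounded by $C\,\rhoi{1}^{-2}$ uniformly (which holds because $\tau$ and its derivatives are bounded on $[0,T/2]$ and behave like the corresponding powers of $(t(T-t))^{-1}$ near $t=T$, exactly as the $\theta$-weights in Section \ref{CarObs}). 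A second, more technical point is justifying the integration by parts and the vanishing of the boundary term at $x=0$ rigorously for solutions in the class $\mathcal U$; this is handled exactly as in the well-posedness results of Section \ref{pre} and in \cite{cannarsa2004persistent}, using the characterization of $H^2_\alpha$ and a density argument with $P_{0\alpha}$.
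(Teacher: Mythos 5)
Your proposal follows essentially the same route as the paper: multiply the equation by $\rhoi{2}^2 u$, integrate by parts in $x$ (the boundary terms vanish by $u(1,t)=0$ and $(x^\alpha u_x)(0,t)=0$), absorb the crossed term $\int \rhoi{2}(\rhoi{2})_x x^\alpha u u_x$ via Young's inequality into $\tfrac12\int\rhoi{2}^2 x^\alpha|u_x|^2$, control the weight derivatives by $|\rhoi{2}(\rhoi{2})_t|\le C\rhoi{1}^2$ and $|(\rhoi{2})_x|^2 x^\alpha\le C\rhoi{1}^2$, use $\rhoi{3}\le C\rhoi{2}\le C\rhoi{1}$ together with $\rhoi{1}\rhoi{3}=\rhoi{2}^2$ for the source terms, and integrate in time. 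This is exactly the paper's proof.

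One justification in your write-up is wrong, though the step it supports survives for a different reason: you claim the boundary term at $t=T$ vanishes ``because $\rhoi{2}^2(t)\to 0$ as $t\to T^-$.'' In fact $\rhoi{2}=e^{sA}\zeta^{-2}$ blows up as $t\to T^-$ (the exponential dominates), which is precisely what forces $u(\cdot,T)=0$ for $u\in L^2(Q;\rhoi{1}^2)$. The correct handling is that, after integrating $\tfrac{d}{dt}\int_0^1\rhoi{2}^2|u|^2$ over $(0,T')$, the term $\int_0^1\rhoi{2}^2|u|^2(T')$ appears on the left with a nonnegative sign and is simply dropped, while the term at $t=0$ is finite (since $m(0)>0$) and bounded by $C\|u_0\|_{H^1_\alpha}^2$; then let $T'\to T$. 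With that correction the argument closes and matches the paper's.
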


\begin{proof}
	Multiplying the equation in (\ref{pb-lin2}) by $\rhoi{2}^2u$, integrating in $[0,1]$ and using the two relations		
	\[\frac{1}{2}\frac{d}{dt}\into \rhoi{2}^2 u^2=\into\rhoi{2}^2 u_tu+\into \rhoi{2}(\rhoi{2})_tu^2\]
	and
	\begin{align*}
	\into\rhoi{2}^2(x^{\alpha /2} u_x)_xu\    & =-2\into \rhoi{2}(\rhoi{2})_x x^{\alpha} uu_x- \into\rhoi{2}^2 x^{\alpha} u_x^2,
	\end{align*}
	we obtain
	\begin{align}\label{myeq2}
	\frac{1}{2}\frac{d}{dt}\into\rhoi{2}^2u^2 \   +\into\rhoi{2}^2 x^{\alpha} u_x^2
	& = \begin{multlined}[t]
	- \into\rhoi{2}^2cu^2+\into\rhoi{2}^2uh\chi_\omega +\into\rhoi{2}^2gu \\+\into\rhoi{2}(\rhoi{2})_tu^2-2\into\rhoi{2}(\rhoi{2})_x x^{\alpha} uu_x\nonumber\\
	\end{multlined}\\ 
	& = I_1+I_2+I_3+I_4+I_5.
	\end{align}

	Now, using $\rhoi{i}\leq C\rhoi{j}$, for $i\geq j$, and $\rhoi{1}\rhoi{3}=\rhoi{2}^2$, we obtain 
	\begin{align*}
	& I_1\leq C \into\rhoi{1}^2|u|^2  ,\\
	& I_2\leq C\left(\frac{1}{2}\int_0^1\rhoi{3}^2|h\chi_\omega|^2\   +\frac{1}{2}\int_0^1\rhoi{1}^2|u|^2\   \right)
	\end{align*}
	and
	\[
	\displaystyle I_3\leq C\left(\frac{1}{2}\int_0^1\rhoi{1}^2|g|^2\   +\frac{1}{2}\int_0^1\rhoi{1}^2|u|^2\   \right).
	\]
	
	Let us estimate $I_4$. Firstly, we will rewrite $A$ as
	$A(t,x)=\varsigma(t,x) \bar{\mu}(x)$, where  
	\[\bar{\mu}(x):=(e^{M\lambda} - e^{\lambda (1+\eta (x))})/\mu(x)
	.\]
	Secondly, note that
	\[
	\rhoi{2}(\rhoi{2})_t
	=e^{sA} \varsigma ^{-2} (se^{sA} \varsigma _t \bar{\mu} \varsigma ^{-2} - 2e^{sA} \varsigma ^{-3} \varsigma _t) 
	=e^{sA} \varsigma ^{-2} (s\varsigma ^{-2} \bar{\mu} - 2\varsigma ^{-3}) \varsigma _t 
	\]
	
	Then, for all $t\in [0,T]$,
	\begin{equation*}
	|\rhoi{2}(\rhoi{2})_t| \leq C \rhoi{1}^2 \varsigma ^{-2} |\varsigma _t| \leq C \rhoi{1}^2,
	\end{equation*}
	whence
	$$I_4\leq C\into\rhoi{1}^2|u|^2  .$$
	
	Now, using
	\begin{equation*}
	|(\rhoi{2})_x|^2 x^{\alpha} u^2\leq Ce^{-2sA}\varsigma^{-2}\left|\varsigma^{-2}+\varsigma^{-4}\right| |\varsigma_x^2|x^{\alpha} u^2\leq C\rhoi{1} ^2u^2,
	\end{equation*}
	we obtain
	\begin{equation*}
	I_5 \leq 2\into|\rhoi{2} x^{\alpha /2} u_x||(\rhoi{2})_x x^{\alpha /2} u| 
	\leq \frac{1}{2}\into\rhoi{2}^2 x^{\alpha} u_x^2+2\into |(\rhoi{2})_x|^2 x^{\alpha} u^2  
	\leq \frac{1}{2}\into\rhoi{2}^2 x^{\alpha} u_x^2+C\into \rhoi{1}^2u^2.
	\end{equation*}
	Hence, (\ref{myeq2}) gives us
	\[\frac{d}{dt}\into\rhoi{2}^2|u|^2  +\into\rhoi{2}^2 x^{\alpha}|u_x|^2  
	\leq C\left(\into\rhoi{1} ^2|u|^2  +\into\rhoi{3}^2|h\chi_\omega|^2   
	+\into\rhoi{1}^2|g|^2  \right).\]
	Integrating in time, the desired result follows.
\end{proof}

\begin{lema} \label{ad2}
	Assume \eqref{geohip}.  Given $u_0 \in H_{\alpha}^{1}$ and $g\in L^2 (Q;\rhoi{1}^2)$, if $(u,h)\in \mathcal U \times L^2 (Q_{\omega}; \rhoi{3}^2)$ is a solution to \eqref{pb-lin2}, then $u_t, (au_x)_x\in L^2(Q;\rhoi{{3}}^2)$ and there exists $C>0$ such that
	\begin{equation*}
	\n{u_t}{\rhoi{{3}}^2}^2+\n{(x^{\alpha} u_x)_x}{\rhoi{{3}}^2}^2\leq C\left(\n{u}{\rhoi{{1}}^2}^2+\n{h\chi_{\omega }}{\rhoi{{3}}^2}^2+\n{g}{\rhoi{1}^2}^2+\n{u_0}{H^1_{\alpha}}^2\right).
	\end{equation*}
\end{lema}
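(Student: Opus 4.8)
The plan is to obtain weighted energy estimates for $u_t$ and $(x^\alpha u_x)_x$ by testing the equation in \eqref{pb-lin2} against $\rhoi{3}^2 u_t$, following the classical parabolic-regularity scheme but keeping track of the weight derivatives. First I would multiply $u_t-(x^\alpha u_x)_x = f\chi_\omega + g$ by $\rhoi{3}^2 u_t$ and integrate over $(0,1)$. The term $\into \rhoi{3}^2 |u_t|^2$ is the good term we want to keep on the left. The term $-\into \rhoi{3}^2 (x^\alpha u_x)_x u_t$ is integrated by parts in $x$: since $u(1,t)=0$ (so $u_t(1,t)=0$) and $(x^\alpha u_x)(0,t)=0$, the boundary contributions vanish, and one gets $\into \rhoi{3}^2 x^\alpha u_x u_{xt} + \into (\rhoi{3}^2)_x x^\alpha u_x u_t$. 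The first of these is $\tfrac12 \tfrac{d}{dt}\into \rhoi{3}^2 x^\alpha |u_x|^2 - \tfrac12 \into (\rhoi{3}^2)_t x^\alpha |u_x|^2$, producing a good time-derivative term; the second is a lower-order cross term to be absorbed.

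The key technical input, exactly as in Lemma \ref{ad1}, is the control of the weight derivatives: one checks that $|(\rhoi{3}^2)_t| \leq C \rhoi{3}\rhoi{2}$ (or, more conveniently, $\leq C\rhoi{2}^2$) and $|(\rhoi{3}^2)_x|^2 x^\alpha \leq C \rhoi{2}^2 \rhoi{3}^2$-type bounds, using $A(x,t)=\varsigma(t,x)\bar\mu(x)$ and the fact that $\varsigma^{-1}|\varsigma_t|$, $x^{\alpha/2}|\varsigma_x|$ are bounded; here the relations $\rhoi{3}\leq C\rhoi{2}\leq C\rhoi{1}$ and $\rhoi{2}^2=\rhoi{1}\rhoi{3}$ let me convert every such weight into the norms appearing on the right-hand side. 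The cross term $\into (\rhoi{3}^2)_x x^\alpha u_x u_t$ is split by Young's inequality: a small multiple of $\into \rhoi{3}^2 |u_t|^2$ (absorbed left) plus $C\into |(\rhoi{3}^2)_x|^2 \rhoi{3}^{-2} x^\alpha |u_x|^2 \leq C\into \rhoi{2}^2 x^\alpha |u_x|^2$, which is finite by Lemma \ref{ad1}. The source terms give $\into \rhoi{3}^2 (f\chi_\omega+g) u_t \leq \tfrac{1}{2}\into \rhoi{3}^2|u_t|^2 + C\into \rhoi{3}^2(|h\chi_\omega|^2+|g|^2)$ — note $\rhoi{3}^2|g|^2\leq C\rhoi{1}^2|g|^2$. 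After absorbing the $\into\rhoi{3}^2|u_t|^2$ pieces into the left side, integrating in $t$ over $(0,T)$, and using that $\rhoi{3}^2 x^\alpha |u_x|^2 \to 0$ as $t\to T$ while at $t=0$ it is controlled by $|u_0|_{H^1_\alpha}^2$ (since $\rhoi{3}(0)$ is finite), I obtain $\n{u_t}{\rhoi{3}^2}^2 \leq C\big(\n{u}{\rhoi{1}^2}^2 + \n{h\chi_\omega}{\rhoi{3}^2}^2 + \n{g}{\rhoi{1}^2}^2 + \n{u_0}{H^1_\alpha}^2\big)$, where I have also invoked Lemma \ref{ad1} to bound $\n{x^{\alpha/2}u_x}{\rhoi{2}^2}^2$ by the same right-hand side. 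Finally, the estimate for $(x^\alpha u_x)_x$ is immediate from the equation: $(x^\alpha u_x)_x = u_t - f\chi_\omega - g$, so $\n{(x^\alpha u_x)_x}{\rhoi{3}^2}^2 \leq C\big(\n{u_t}{\rhoi{3}^2}^2 + \n{h\chi_\omega}{\rhoi{3}^2}^2 + \n{g}{\rhoi{1}^2}^2\big)$, which combines with the previous bound.

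The main obstacle I anticipate is the rigorous justification of the boundary terms and the integration by parts in $x$ near the degeneracy point $x=0$: one must verify that $\rhoi{3}^2 x^\alpha u_x u_t$ and $(\rhoi{3}^2)_x x^\alpha u_x u_t$ are genuinely integrable and that the trace at $x=0$ vanishes, which uses the characterization of $H^2_\alpha$ (namely $(x^\alpha u_x)(0)=0$ and $x^\alpha u\in H^1_0(0,1)$) together with the boundedness of $\varsigma^{-1}x^{\alpha/2}|\varsigma_x|$. A standard remedy is to first prove the estimate for the Galerkin/regularized approximations (or for $u$ replaced by its truncation away from $x=0$), where all integrations by parts are legitimate, and then pass to the limit using that the right-hand side is uniformly bounded. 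The weight-derivative bookkeeping ($|(\rhoi{3}^2)_t|$, $|(\rhoi{3}^2)_x|$) is routine but must be done carefully so that every term reduces to one of the four norms on the right-hand side; no genuinely new idea beyond Lemma \ref{ad1} is needed.
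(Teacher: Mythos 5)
Your argument is correct, and its first half coincides with the paper's: you test the equation against $\rhoi{3}^2 u_t$, integrate by parts in $x$, extract $\tfrac12\tfrac{d}{dt}\into\rhoi{3}^2x^{\alpha}|u_x|^2$, control the weight derivatives via $|(\rhoi{3}^2)_t|\leq C\rhoi{2}^2$ and $|(\rhoi{3})_x|\leq C\rhoi{2}$, absorb the $u_t$ pieces, and invoke Lemma \ref{ad1} for $\n{x^{\alpha/2}u_x}{\rhoi{2}^2}$; this is exactly how the paper arrives at \eqref{myeq9}. Where you genuinely diverge is the second estimate: you read $(x^{\alpha}u_x)_x=u_t-f\chi_{\omega}-g$ off the equation and conclude by the triangle inequality together with $\rhoi{3}\leq C\rhoi{1}$, whereas the paper runs a second energy computation, multiplying by $-\rhoi{3}^2(x^{\alpha}u_x)_x$ and re-estimating the resulting terms. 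Your shortcut is legitimate and cleaner: once $\n{u_t}{\rhoi{3}^2}$ is controlled, the equation itself delivers the bound on $(x^{\alpha}u_x)_x$ with no further integration by parts. One small imprecision to fix: you assert that $\rhoi{3}^2x^{\alpha}|u_x|^2\to 0$ as $t\to T^-$, but $\rhoi{3}=e^{sA}\zeta^{-3}$ blows up exponentially there, so this is not known a priori. It is also not needed: the boundary contribution at $t=T$ enters the integrated identity with a favorable (nonnegative) sign and can simply be dropped (integrate on $(0,T-\epsilon)$ and discard it), while the term at $t=0$ is bounded by $C\n{u_0}{H^1_{\alpha}}^2$ exactly as you say. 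With that adjustment your proof is complete.
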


\begin{proof}
	
	In the first step,  we will estimate the first term of left side of the inequality. Multiplying equation in  (\ref{pb-lin2}) by $\rhoi{3}^2u_t$ and integrating in $[0,1]$, we have
	
	\begin{align}\label{myeq3}
	\int_0^1\rhoi{3}^2u_t^2 
	&=  \int_0^1\rhoi{3}^2u_th\chi_\omega   +\into\rhoi{3}^2g u_t -\into c(x,t)\rhoi{3}^2uu_t  +\into\rhoi{3}^2(x^{\alpha} u_x)_xu_t\
	\nonumber \\
	& =: I_1+I_2-I_3+I_4.
	\end{align}
	
	Using Young's inequality with $\varepsilon$ and $\rhoi{i}\leq C\rhoi{j}$, for $i\geq j$, we obtain
	\begin{align*}
	I_1  &\leq \into \rhoi{3}^2|h\chi_\omega||u_t|\   
	\leq \varepsilon\into\rhoi{3}^2|u_t|^2  +\frac{1}{4\varepsilon}\into\rhoi{3}^2|h\chi_\omega|^2,\\
	I_2 
	&\leq \into\rhoi{3}^2|gu_t| 
	\leq \varepsilon\into\rhoi{3}^2|u_t|^2  +\frac{1}{4\varepsilon}\into\rhoi{3}^2|g|^2  
	\leq \varepsilon\into\rhoi{3}^2|u_t|^2  
	+C\into\rhoi{1} ^2|g|^2
	\end{align*}
	and
	\begin{align*}
	-I_3\leq \into |c(t,x)|\rhoi{3}^2|uu_t|   
	&\leq \varepsilon\into\rhoi{3}^2 u_t^2  +C\into\rhoi{1}^2u^2.
	\end{align*}
	
	Now, integrating $I_4$ by parts, we can see that
	\begin{align}\label{myeq37}
	I_4
	& =\left.\rhoi{3}^2 x^{\alpha}u_xu_t\right\vert_{x=0}^{x=1} -\into(\rhoi{3}^2u_t)_x x^{\alpha} u_x =-2\into\rhoi{3}(\rhoi{3})_x x^{\alpha} u_tu_x-\frac{1}{2}\frac{d}{dt}\into\rhoi{3}^2 x^{\alpha} u_x^2 +\frac{1}{2}\into(\rhoi{3}^2)_t x^{\alpha} u_x^2\nonumber\\
	& =-2I_{41}-\frac{1}{2}\frac{d}{dt}\into\rhoi{3}^2 x^{\alpha} u_x^2\   +\frac{1}{2} I_{42}.
	\end{align}
	Hence,
	\begin{equation}\label{myeq4}
	\int_0^1\rhoi{3}^2|u_t|^2\   +\frac{1}{2}\frac{d}{dt}\into\rhoi{3}^2 x^{\alpha} |u_x|^2\    =I_1+I_2-I_3-2I_{41}+\frac{1}{2}I_{42}.
	\end{equation}
	Since $|(\rhoi{3})_x|\leq C\rhoi{2}$ and $|(\rhoi{3}^{2})_t | \leq C\rhoi{2}^2$, observe that
	\[
	|\rhoi{3}(\rhoi{3})_x x^{\alpha} u_xu_t|\leq C|\rhoi{3} u_t||\rhoi{2} x^{\alpha/2} u_x|
	\]
	and
	$$|(\rhoi{3}^2)_t|= 2|\rhoi{3}(\rhoi{3})_t|\leq C\rhoi{2}^2.$$
	So that,
	$$I_{41}\leq \frac{1}{4}\into\rhoi{3}^2u_t^2+C\into\rhoi{2}^2 x^{\alpha} u_x^2$$
	and
	\begin{equation*}\label{myeq8}
	I_{42} \leq C\into\rhoi{2}^2 x^{\alpha} u_x^2  .
	\end{equation*}
	Using the estimates obtained for $I_1,I_2,I_3, I_{41}$ and $I_{42}$, the relation (\ref{myeq4}) provides
	
	\begin{equation*}
	\int_0^1\rhoi{3}^2u_t^2\   +\frac{1}{2}\frac{d}{dt}\into\rhoi{3}^2 x^{\alpha} u_x^2\      \leq C\left(\into\rhoi{3}^2|h\chi_\omega|^2  +\into\rhoi{1}^2g^2 \right.
	\left.+\into \rhoi{1}^2u^2  +\into\rhoi{2}^2 x^{\alpha} u_x^2  \right),
	\end{equation*}
	and, consequently,
	\begin{equation}\label{myeq9}
	\intq\rhoi{3}^2u_t^2   \leq C\left(\intq\rhoi{1}^2u^2  +\intw\rhoi{3}^2h^2  \right. 	\left.+\intq\rhoi{1}^2g^2
	+\|u_0\|_{H_{\alpha}^1}^2\right).
	\end{equation}
	
	In the second part, we must estimate the term $\intq\rhoi{3}^2|(x^{\alpha} u_x)_x|^2  $. Multiplying the equation in (\ref{pb-lin2}) by $-\rhoi{3}^2(x^{\alpha} u_x)_x$ and integrating in $[0,1]$, we take
	\begin{align*} 
	\into\rhoi{3}^2|(x^{\alpha} u_x)_x|^2   
	&=-\into\rhoi{3}^2h\chi_\omega(x^{\alpha} u_x)_x  
	-\into\rhoi{3}^2g(x^{\alpha} u_x)_x  
	+\into c(x,t)\rhoi{3}^2u(x^{\alpha} u_x)_x  +\into\rhoi{3}^2u_t(x^{\alpha}u_x)_x  \\
	& =-J_1-J_2+J_3+I_4.
	\end{align*}
	
	As earlier in this proof, applying Young's inequality with $\varepsilon$, we obtain
	\[J_1
	\leq \into\rhoi{3}^2|h\chi_\omega||(x^{\alpha} u_x)_x|  \leq \varepsilon\into\rhoi{3}^2|(x^{\alpha} u_x)_x|^2  +\frac{1}{4\varepsilon}\into\rhoi{3}^2|h\chi_\omega|^2  .\]
	\[J_2
	\leq \into\rhoi{3}^2|g||(x^{\alpha} u_x)_x|  
	\leq \varepsilon\into\rhoi{3}^2|(x^{\alpha} u_x)_x|^2  +\frac{1}{4\varepsilon}\into\rhoi{1}^2g^2 .\]
	\begin{align*}
	J_3 & \leq C\left(\varepsilon\into\rhoi{3}^2|(x^{\alpha}u_x)_x|^2  +\frac{1}{4\varepsilon}\into\rhoi{1}^2u^2  .\right)
	\end{align*}
	
	From \eqref{myeq37} and \eqref{myeq9}, we achieve
	\begin{equation*}
	\into\rhoi{3}^2|(x^{\alpha} u_x)_x|^2  +\frac{1}{2}\frac{d}{dt}\into\rhoi{3}^2 x^{\alpha} |u_x|^2  \leq C\left(\into\rhoi{3}^2|h\chi_\omega|^2  +\into\rhoi{1}^2|g|^2  \right.
	\left.+\into\rhoi{1}^2|u|^2  +\into\rhoi{2}^2 x^{\alpha}|u_x|^2  \right)
	\end{equation*}
	Integrating in time, we conclude the proof.
\end{proof}

\subsection{Local null-controllability for the nonlinear system}
	In this section, our goal is to prove Theorem \ref{th-nonlocal}. It pass by applying Theorem \ref{lyusternik}, which will allow us to conclude that $H:E\longrightarrow F$, given in \eqref{H}, has a right inverse mapping defined in a small ball $B\subset F= L^2(Q;\rho_1^2)\times H_a^1$. Since Theorem \ref{linearcontrol}   already guarantees that $H'(0,0)\in \mathcal L(E,F)$ is onto, it remains to verify that
		\begin{itemize}
			\item $H$ is well-defined;
			\item $H\in C^1 (E,F)$.
		\end{itemize}
	
		We will clarify that in Propositions \ref{lema3.3} amd \ref{lema3.4}.
\begin{proposicao}\label{lema3.3}
	The mapping $H:E\longrightarrow F$, given in \eqref{H}, is well defined.
\end{proposicao}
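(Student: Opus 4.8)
The plan is to verify that $H(E)\subseteq F$, i.e. that for every $(u,h)\in E$ one has $H_1(u,h)\in L^2(Q;\rho_1^2)$ and $H_2(u,h)=u(0,\cdot)\in H_\alpha^1$ (recall \eqref{H}). The second containment is immediate: $u\in\mathcal U\subset C^0([0,T];H_\alpha^1)$, so the trace of $u$ at $t=0$ is a well-defined element of $H_\alpha^1$; all the work is in $H_1$. Given $(u,h)\in E$, I would set $g:=\mathcal{L}u-h\chi_\omega$ and $u_0:=u(0,\cdot)$. By definition of $E$, $g\in L^2(Q;\rho_1^2)$; moreover $u_0\in H_\alpha^1$ and $u$ fulfils the boundary conditions of \eqref{pb-lin2} (the conditions $u(1)=0$ and $(x^\alpha u_x)(0)=0$ being built into $H_\alpha^1$ and $H_\alpha^2$, and $u\in\mathcal U$ lies in $H_\alpha^2$ for a.e. $t$), and $u_t-(x^\alpha u_x)_x=g+h\chi_\omega$ by construction. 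Hence $(u,h)$ is a solution of \eqref{pb-lin2} with source $g\in L^2(Q;\rho_1^2)$ and datum $u_0\in H_\alpha^1$, so Lemma \ref{ad2} applies and provides the extra regularity $u_t,\ (x^\alpha u_x)_x\in L^2(Q;\rho_3^2)$. This is the essential input for everything that follows.

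Next I would use the decomposition
\[
H_1(u,h)=g+\Big(1-\ell\big(\textstyle\int_0^1 u\big)\Big)(x^\alpha u_x)_x .
\]
The first summand already lies in $L^2(Q;\rho_1^2)$, so the problem is the second. Writing $\rho_1=\zeta^2\rho_3$ (a direct consequence of $\rho_i=e^{sA}\zeta^{-i}$, see \eqref{novospesos}), it suffices to prove that $\zeta^2\big(1-\ell(\int_0^1 u)\big)\in L^\infty(Q)$, for then $\rho_1\big(1-\ell(\int_0^1 u)\big)(x^\alpha u_x)_x$ is the product of this bounded factor with $\rho_3(x^\alpha u_x)_x\in L^2(Q)$. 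Since $\ell\in C^1$ with $\ell(0)=1$ we have $|1-\ell(y)|\le\|\ell'\|_\infty|y|$, and combining this with $|\int_0^1 u(\cdot,t)|\le|u(\cdot,t)|_2$ and $\zeta=\tau e^{\lambda(1+\eta)}\le e^\lambda\tau$ (because $\eta\le0$), the claim reduces to the pointwise‑in‑time estimate $\sup_{t\in[0,T)}\tau(t)^2\,|u(\cdot,t)|_2<\infty$, that is, $|u(\cdot,t)|_2=O\big(m(t)^2\big)$ as $t\to T^-$.

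This decay estimate is the hard part, and it is where the weighted spaces are genuinely exploited. Since $u\in L^2(Q;\rho_1^2)$ with $\rho_1\to+\infty$ as $t\to T^-$, Remark \ref{gotozero} (applicable because $u\in\mathcal U$) gives $u(\cdot,T)=0$, and then $u\in H^1(0,T;L^2(0,1))$ lets me write $u(\cdot,t)=-\int_t^T u_t(\cdot,\theta)\,d\theta$. For $\theta<T$ one has $|u_t(\cdot,\theta)|_2^2\le R(\theta)\int_0^1\rho_3(x,\theta)^2|u_t(x,\theta)|^2\,dx$, where $R(\theta):=\sup_{x\in[0,1]}\rho_3(x,\theta)^{-2}$; a short computation from \eqref{functions2} (in which $A(x,\theta)\to+\infty$ as $\theta\to T^-$) gives $R(\theta)=e^{6\lambda}\tau(\theta)^6 e^{-2s\tau(\theta)(e^{2\lambda}-e^\lambda)}$, a function bounded on $[0,T]$ which tends to $0$ as $\theta\to T^-$ faster than any power of $(T-\theta)$. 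Cauchy–Schwarz then yields $|u(\cdot,t)|_2^2\le\|\rho_3 u_t\|_{L^2(Q)}^2\int_t^T R(\theta)\,d\theta$, with $\|\rho_3 u_t\|_{L^2(Q)}<\infty$ by Lemma \ref{ad2}; since $\int_t^T R(\theta)\,d\theta$ likewise decays faster than any power of $(T-t)$, the quantity $\tau(t)^2\big(\int_t^T R\big)^{1/2}$ stays bounded on $[0,T)$, which establishes the required estimate and hence $H(E)\subseteq F$. The main obstacle is precisely this conversion of the integral weighted bounds $u\in L^2(Q;\rho_1^2)$, $u_t\in L^2(Q;\rho_3^2)$ into a pointwise‑in‑time decay rate for $|u(\cdot,t)|_2$ near $t=T$ strong enough to absorb the polynomial blow‑up of $\zeta^2$; everything else is routine bookkeeping with Lemma \ref{ad2} and with the structure of the weights $\rho_i=e^{sA}\zeta^{-i}$.
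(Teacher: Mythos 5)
Your proof is correct and follows the same overall architecture as the paper's: you use the identical decomposition $H_1(u,h)=(\mathcal L u-h\chi_\omega)+\bigl(\ell(0)-\ell(\int_0^1u)\bigr)(x^\alpha u_x)_x$, invoke Lemma \ref{ad2} to place $\rho_3(x^\alpha u_x)_x$ (and $\rho_3 u_t$) in $L^2(Q)$, and reduce everything to a supremum-in-time bound on $\tau^2$ times a norm of $u(\cdot,t)$ --- which is exactly the content of the paper's Lemma \ref{supremo}. Where you genuinely diverge is in the proof of that supremum bound. The paper shows that $q(t)=e^{M_s/m(t)}\bigl(\int_0^1u\bigr)^2$ belongs to $H^1(0,T)\hookrightarrow C^0([0,T])$, estimating $\int_0^T|q|^2$ via $\|u\|_{\rho_1^2}$ and $\int_0^T|q'|^2$ via $\|u\|_{\rho_1^2}$ and $\|u_t\|_{\rho_3^2}$; you instead use $u(\cdot,T)=0$ (Remark \ref{gotozero}) to write $u(\cdot,t)=-\int_t^Tu_t(\cdot,\theta)\,d\theta$ and then exploit, via Cauchy--Schwarz, the super-polynomial decay of $\sup_x\rho_3(x,\theta)^{-2}$ as $\theta\to T^-$ to absorb the polynomial blow-up of $\tau(t)^2$. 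Both arguments rest on the same essential input ($u_t\in L^2(Q;\rho_3^2)$ from Lemma \ref{ad2}, together with the weighted integrability of $u$ built into $E$); yours is marginally stronger in that it controls $\tau^2|u(\cdot,t)|_2$ rather than only $\tau^2|\int_0^1u|$, and it makes the mechanism (the exponential factor $e^{-2sA}$ near $t=T$ beating any power of $\tau$) more transparent, at the price of explicitly needing the terminal condition $u(\cdot,T)=0$, which the paper's Sobolev-embedding route does not invoke. One cosmetic point: your $R(\theta)$ should be stated as an upper bound for $\sup_x\rho_3(x,\theta)^{-2}$ rather than an exact formula (the supremum of a product is not the product of suprema), but this affects nothing.
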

\begin{proof}
	Given $(u,h)\in E$, we intend to prove that $H(u,h)$ belongs to $L^2(Q;\rhoi{1}^2)\times H_\alpha^1$. From the definition of $E$, it is clear that $H_2(u,h)=u(0,\cdot)\in H^1_\alpha$. Let us see that $H_1 (u,h)\in L^2(Q;\rhoi{1}^2)$. 
	
	In fact, since $\ell(0)=1$ and $\ell$ is Lipschitz continuous, we have
	\begin{align*}
	& \intq\rz^2|H_1(u,h)|^2 \dd x \dd t = \intq \rz^2\left|u_t-\ell\left(\int_0^1u\dd x \right)(x^\alpha u_x)_x-h\chi_\omega\right|^2   \\
	& \leq 4\intq \rz^2\left| \mathcal L (u)-h\chi_\omega\right|^2\dd x \dd t   +4 \intq \rz^2\left|\left[\ell\left(\int_0^1u\dd x\right)-\ell(0)\right]
	(x^\alpha u_x)_x\right|^2 \dd x \dd t \\
	& \leq 4\n{(u,h)}{E}^2+4\intq \rhoi{1}^2\left(\into u \dd x\right)^2 |(x^\alpha u_x)_x|^2 \dd x \dd t.
	\end{align*}
	
	Hence, we just need to prove that the last integral is bounded from above by $\n{(u,h)}{E}^2$. Indeed, note that
	\begin{align*}
	    \intq \rhoi{1}^2\left(\into u \dd x\right)^2 |(x^\alpha u_x)_x|^2 \dd x \dd t & =\intq \rhoi{1}^2\rhoi{3}^{-2}\left(\into u \dd x\right)^2 \rhoi{3}^2|(x^\alpha u_x)_x|^2 \dd x \dd t\\
	    & \leq C  \sup_{[0,T]}\left(\tau^4\left(\into u \dd x\right)^2\right)\intq  \rhoi{3}^2|(x^\alpha u_x)_x|^2 \dd x \dd t\\
	    & \leq C \sup_{[0,T]}\left(\tau^4\left(\into u \dd x\right)^2\right)\n{(u,h)}{E}^2 \\
	    &\leq C\n{(u,h)}{E}^4,
	\end{align*}
where the last inequality is a consequence of Lemma \ref{supremo}, since $\tau^4\leq Ce^{M_s/m(t)}$.  \end{proof}

	\begin{lema}\label{supremo}
	Given $s>0$, there exists $M_s >0$ such that 
	\[
	\displaystyle \sup_{t\in[0,T]} \left\lbrace e^{\frac{M_s}{m(t)}} \left( \int_{0}^{1} u \right)^2  \right\rbrace
	\leq C\|(u,h)\|_{_E}^{2},\]
	for all $(u,h)\in E$, where $m=m(t)$ is the the function defined in \eqref{functions2}.
\end{lema}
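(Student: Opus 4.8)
The plan is to bound the supremum by splitting the time interval at $t=T/2$ and exploiting two different mechanisms on each half. On $[T/2,T]$, where $m(t)=t^4(T-t)^4$, the factor $e^{M_s/m(t)}$ blows up at $t=T$, so the estimate must come entirely from the control of $u$ in the weighted space $L^2(Q;\rhoi{1}^2)$, whose weight $\rhoi{1}^2=e^{2sA}\zeta^{-2}$ also blows up at $t=T$ and should dominate $e^{M_s/m(t)}$ for $M_s$ small enough; on $[0,T/2]$, where $m$ is bounded below, the factor $e^{M_s/m(t)}$ is bounded, and we only need the $C^0([0,T];H_\alpha^1)$-part of the norm together with the $L^2$-bound, using that $u(0,\cdot)\in H_\alpha^1$ and the well-posedness estimate.

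First I would record that, by Cauchy--Schwarz on $[0,1]$, $\left(\into u\right)^2\leq \into u^2$, so it suffices to estimate $\sup_t e^{M_s/m(t)}\into u^2\dd x$. Next I would write, for any $t_1<t_2$ in $[0,T]$,
\[
e^{\frac{M_s}{m(t_2)}}\into u(x,t_2)^2\dd x - e^{\frac{M_s}{m(t_1)}}\into u(x,t_1)^2\dd x
= \int_{t_1}^{t_2}\frac{d}{dt}\left(e^{\frac{M_s}{m(t)}}\into u^2\dd x\right)\dd t,
\]
and expand the derivative: one term carries $\frac{d}{dt}e^{M_s/m(t)}=-M_s\frac{m'(t)}{m(t)^2}e^{M_s/m(t)}$ times $\into u^2$, the other carries $2e^{M_s/m(t)}\into u\,u_t\dd x$, which I would rewrite using the equation $u_t=(x^\alpha u_x)_x+h\chi_\omega+g$ (or, equivalently, $\mathcal L u-h\chi_\omega=g$ in the linearized setting that $E$ encodes), integrate the $(x^\alpha u_x)_x$ term by parts to produce a good negative term $-2e^{M_s/m(t)}\into x^\alpha u_x^2$, and absorb the remaining terms. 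The key point is that $m'(t)/m(t)^2$ and the weights $\rhoi{1}^{-2},\rhoi{3}^{-2}$ are all comparable up to powers of $\tau=1/m$, so choosing $M_s$ small (depending on $s$) relative to the exponential rate $2s$ appearing in $\rhoi{1}^2=e^{2sA}\zeta^{-2}$ makes $e^{M_s/m(t)}\leq C\rhoi{1}^2$ pointwise; this controls all the integrands on the right by $\rhoi{1}^2 u^2$, $\rhoi{3}^2 h^2\chi_\omega$ and $\rhoi{1}^2 g^2$, whose time integrals are pieces of $\|(u,h)\|_E^2$.

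To turn the differential inequality into a supremum bound, I would pick $t_1$ in a fixed compact subinterval of $(0,T)$ — say integrate the inequality in $t_1$ over $[T/4,3T/4]$ and divide by the length — so that $e^{M_s/m(t_1)}\into u(x,t_1)^2\dd x$ is controlled by $C\intq\rhoi{1}^2 u^2\leq C\|(u,h)\|_E^2$, and then let $t_2$ range over $[T/4,T)$; for $t_2\in[0,T/4]$ the factor $e^{M_s/m}$ is bounded and the bound follows from $\int_0^{T/4}\into u^2\,dt$ plus, near $t=0$, the $C^0([0,T];H_\alpha^1)$ control of $u(\cdot,0)$ which is part of $\|(u,h)\|_E$. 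Combining the two ranges gives $\sup_{[0,T]}e^{M_s/m(t)}\into u^2\dd x\leq C\|(u,h)\|_E^2$, which is the claim.

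The main obstacle I anticipate is the bookkeeping in choosing $M_s$: one must verify that the \emph{single} negative contribution from $\frac{d}{dt}e^{M_s/m(t)}$ (which is $O(M_s\tau^{?}e^{M_s/m})$) together with the dissipative term $-2e^{M_s/m}\into x^\alpha u_x^2$ genuinely dominates all the cross terms arising from $h$, $g$ and the lower-order/zeroth-order coefficients after Young's inequality, uniformly in $t$, and that the comparison $e^{M_s/m(t)}\le C\rhoi{1}^2$ is valid on the whole interval $[0,T)$ and not merely near $t=T$ — this forces $M_s$ to be small relative to $s$ and may require a harmless adjustment of the constant in the definition of $A$. Everything else is a routine energy computation analogous to the proof of the observability inequality \eqref{obs} and of Lemmas \ref{ad1}--\ref{ad2}.
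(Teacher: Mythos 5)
Your argument is essentially correct and rests on the same key point as the paper's proof, namely the pointwise domination $e^{M_s/m(t)}\leq C\rho_{1}^{2}$ (and likewise with $\rho_2^2$, $\rho_3^2$) valid on all of $[0,T)$ once $M_s$ is chosen small relative to $s$: since $\rho_i^2=e^{2sA}\zeta^{-2i}$ and $A\geq \beta_\lambda \tau$ with $\beta_\lambda=\min_x\bigl(e^{2\lambda}-e^{\lambda(1+\eta(x))}\bigr)>0$, the exponential $e^{2s\beta_\lambda\tau}$ absorbs $e^{M_s\tau}$ together with any power of $\tau$ provided $M_s<2s\beta_\lambda$ (the paper takes $M_s=s\beta_\lambda/2$). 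The packaging, however, is genuinely different. The paper never touches the equation at this stage: it sets $q(t)=e^{M_s/m(t)}\bigl(\int_0^1u\bigr)^2$, bounds $q$ and $q'$ in time using only the weight comparisons plus the a priori estimate $\|u_t\|^2_{\rho_3^2}\leq C\|(u,h)\|_E^2$ inherited from Lemma \ref{ad2}, and concludes via the embedding of $H^1(0,T)$ into $C^0([0,T])$. Your route substitutes the PDE into $\tfrac{d}{dt}\bigl(e^{M_s/m}\int_0^1u^2\bigr)$, integrates by parts, and averages over $t_1$ — which is the one-dimensional Sobolev embedding done by hand — and it yields the slightly stronger conclusion with $\int_0^1u^2$ in place of $\bigl(\int_0^1u\bigr)^2$. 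Two points need tightening. First, when $t_2<t_1$ the dissipative term $-2e^{M_s/m}\int_0^1x^\alpha u_x^2$ enters with the unfavorable sign, so you must either invoke $e^{M_s/m}\leq C\rho_2^2$ together with Lemma \ref{ad1} to control its time integral, or arrange the averaging interval for $t_1$ to lie to the left of the range of $t_2$ (your splitting at $T/4$ essentially does this). Second, on $[0,T/4]$ the bound $\sup_t\int_0^1u^2\leq C\|(u,h)\|_E^2$ does not follow from $\int_0^{T/4}\int_0^1u^2$ alone; it requires a short Gronwall argument from the equation using $u(0,\cdot)\in H^1_\alpha$ and the embeddings $L^2(Q;\rho_i^2)\hookrightarrow L^2(Q)$. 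Both fixes are routine, so I regard your proposal as a valid alternative proof.
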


\begin{proof}
	Firstly, for $s>0$, let us consider $(u,h)\in E$ and the function $q:[0,T]\longrightarrow \mathbb{R}$
\[
\displaystyle q(t):=e^{\frac{M_s}{m(t)}} \left( \int_{0}^{1} u(x,t)dx\right)^2,
\]
for all $t\in [0,T]$, where $M_s >0$ will be specified later. \\

\textbf{\underline{Claim 1}:} Given $s>0$, there exist $M_s >0$ and $C>0$ such that 
\[
\displaystyle e^{\frac{M_s}{m(t)}} \leq C \rhoi{1} ^2.
\]
\\
Indeed, for any $K>0$, we know that
\[
\displaystyle e^{\frac{-k}{m}} \leq \frac{2}{k^2} [m(t)]^2 \emph{ for all } t\in [0,T].
\]
In particular, taking $k=s\beta _{\lambda}$ and $M_s = \frac{s\beta _{\lambda}}{2}$, we obtain 
\begin{equation}
    \rhoi{1}^2 = e^{2sA} \varsigma ^{-2} \geq e^{-2\lambda} m^2 e^{2sA} \geq \frac{e^{-2\lambda} k^2}{2} e^{2sA-\frac{k}{m}} = C_{\lambda , s} e^{\frac{2s\beta_{\lambda} -k}{m}} = C_{\lambda ,s} e^{\frac{2M_s}{m}},
\end{equation}
onde $C_{\lambda ,s} = \frac{e^{-2 \lambda} s^2 \beta _{\lambda }^2}{2}$. \\

\textbf{\underline{Claim 2}:} There exist $K_1 = K_1 (\lambda ,s)>0$ and $K_2 = K_2 (\lambda ,s)>0$, such that 
\begin{equation}
\frac{\rhoi{3}^2}{m^2} \leq K_1 \rhoi{1}^2 \emph{ and } e^{\frac{2M_s}{m}} \leq K_2 \rhoi{3}^2.
\end{equation}
As a consequence, $q\in H^1(0,T)\hookrightarrow C^0([0,T])$. \\

In fact, arguing as in Claim 1, we can get
\begin{equation*}
    \displaystyle \frac{\rhoi{3}^2}{m^4} = \frac{e^{2sA} \tau^{-2}}{\mu ^2} \leq \frac{\rhoi{1}^2}{\mu ^6} \leq K_1 \rhoi{1}^2
\end{equation*}
and
\begin{equation*}
    \displaystyle \rhoi{3}^2 = \frac{e^{2sA} m^6}{\mu^{6}} \geq e^{-6\lambda} e^{2sA} \frac{k^6}{6!} e^{\frac{-k}{m}} 
    = \frac{e^{-6\lambda} k^6}{6!} e^{\frac{2s\beta_{\lambda} -k}{m}} 
    =\frac{1}{K_2} e^{\frac{2M_s}{m}},
\end{equation*}
where we have taken $k=s\beta_{\lambda}$, $M_s = \frac{s\beta_{\lambda}}{2}$ and $K_2 = \frac{6!}{e^{-6\lambda} (s\beta _{\lambda})^6}$. In this case, 
\begin{equation*}
    \displaystyle \int_{0}^{T} |q|^2  
    \leq \int_{0}^{T} \int_{0}^{1} e^{\frac{2M_s}{m}} |u|^2 
    \leq \frac{1}{C_{\lambda ,s}} \int_{0}^{T} \int_{0}^{1} \rhoi{1}^2 |u|^2 
    \leq C\|(u,h)\|_{E}^{2}
\end{equation*}
and
\begin{align*}
\displaystyle \int_{0}^{T} |q^{\prime}|^2
&\leq C\left( \int_{0}^{T} \int_{0}^{1} \frac{M_{s}^2 (m')^2}{m^4} e^{\frac{2M_s}{m}} |u|^2 + \int_{0}^{T} \int_{0}^{1} e^{\frac{2M_s}{m}} |u_t|^2 \right) \\
&\leq C\left( \int_{0}^{T} \int_{0}^{1} \frac{\rhoi{3}^2}{m^4} |u|^2
+ \int_{0}^{T} \int_{0}^{1} \rhoi{3}^2 |u_t|^2 \right) \\
&\leq C\left( \int_{0}^{T} \int_{0}^{1} \rhoi{1}^2 |u|^2
+ \int_{0}^{T} \int_{0}^{1} \rhoi{3}^2 |u_t|^2 \right) \\
&\leq C\|(u,h)\|_{E}^{2},
\end{align*}
following the desired result.
\end{proof}

\begin{proposicao}
    \label{lema3.4}
	The mapping $H$ belongs to $C^1 (E,F)$.
\end{proposicao}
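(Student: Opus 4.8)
The plan is to show that $H = (H_1, H_2)$ is continuously Fréchet differentiable on $E$. Since $H_2(u,h) = u(0,\cdot)$ is linear and bounded from $E$ to $H_\alpha^1$ (it is one of the terms controlled by $\|(u,h)\|_E$), it is automatically of class $C^1$ with constant derivative, so the whole issue is the nonlinear term in $H_1$. Writing $H_1(u,h) = (\mathcal{L}u - f\chi_\omega) + \left[\ell(0) - \ell\left(\int_0^1 u\right)\right](x^\alpha u_x)_x$, the affine part $(u,h)\mapsto \mathcal{L}u - f\chi_\omega$ is linear and bounded into $L^2(Q;\rho_1^2)$ by the definition of the $E$-norm, hence $C^1$. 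Therefore it suffices to analyze the map
\[
\Phi(u,h) := \ell\left(\int_0^1 u\,dx\right)(x^\alpha u_x)_x,
\]
and prove it is $C^1$ from $E$ into $L^2(Q;\rho_1^2)$.

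First I would compute the candidate derivative. Formally, for $(u,h),(\bar u,\bar h)\in E$,
\[
\Phi'(u,h)(\bar u,\bar h) = \ell'\left(\int_0^1 u\right)\left(\int_0^1 \bar u\right)(x^\alpha u_x)_x + \ell\left(\int_0^1 u\right)(x^\alpha \bar u_x)_x.
\]
The key structural facts I would use repeatedly are: (i) $\ell$ and $\ell'$ are bounded (indeed $\ell \in C^1$ with bounded derivative), so the scalar factors $\ell(\int_0^1 u)$ and $\ell'(\int_0^1 u)$ are in $L^\infty(Q)$ with bound independent of $u$; (ii) for any $(w,k)\in E$ one has $(x^\alpha w_x)_x \in L^2(Q;\rho_3^2)$ with norm $\lesssim \|(w,k)\|_E$ (this is exactly the content of the estimate \eqref{est-ad} combined with the Lemmas \ref{ad1}--\ref{ad2}, applied to the associated linear problem, or simply the definition of $E$ together with those lemmas); and (iii) Lemma \ref{supremo}, which gives $\sup_{[0,T]}\left(\tau^4(\int_0^1 u)^2\right)\le C\|(u,h)\|_E^2$, together with the pointwise bound $\rho_1^2\rho_3^{-2}\le C\tau^4$ (from the definitions in \eqref{novospesos}--\eqref{functions2}), so that a factor $\rho_3$-weighted $L^2$ quantity times $(\int_0^1 u)$ lands in $\rho_1$-weighted $L^2$. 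With these three ingredients, each term is estimated as in the proof of Proposition \ref{lema3.3}: $\|\ell'(\int_0^1 u)(\int_0^1\bar u)(x^\alpha u_x)_x\|_{\rho_1^2} \le C\sup_{[0,T]}(\tau^2|\int_0^1\bar u|)\,\|(x^\alpha u_x)_x\|_{\rho_3^2}\le C\|(\bar u,\bar h)\|_E\|(u,h)\|_E$, and similarly for the second term, which shows $\Phi'(u,h)\in\mathcal{L}(E,F)$.

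Next I would verify that $\Phi$ is actually differentiable with this derivative, i.e. that $\|\Phi(u+\bar u,h+\bar h) - \Phi(u,h) - \Phi'(u,h)(\bar u,\bar h)\|_{\rho_1^2} = o(\|(\bar u,\bar h)\|_E)$. Expanding, the remainder splits into two pieces: one involving $\left[\ell(\int_0^1(u+\bar u)) - \ell(\int_0^1 u) - \ell'(\int_0^1 u)\int_0^1\bar u\right](x^\alpha u_x)_x$, controlled by a Taylor estimate on $\ell$ (using that $\ell'$ is, say, locally Lipschitz or uniformly continuous — here only boundedness of $\ell'$ plus continuity is needed, yielding a $\sup_{[0,T]}(\tau^2|\int_0^1\bar u|)$-times-$o(1)$ bound via the mean value theorem) times $\|(x^\alpha u_x)_x\|_{\rho_3^2}$; and a bilinear cross term $\left[\ell(\int_0^1(u+\bar u)) - \ell(\int_0^1 u)\right](x^\alpha\bar u_x)_x$, which is quadratic in $\|(\bar u,\bar h)\|_E$ by the Lipschitz bound on $\ell$ and ingredient (ii) applied to $\bar u$. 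Finally, continuity of $(u,h)\mapsto\Phi'(u,h)$ follows from the same estimates: the difference $\Phi'(u_1,h_1) - \Phi'(u_2,h_2)$ applied to a test pair is bounded, using boundedness and uniform continuity of $\ell$ and $\ell'$ on the relevant bounded sets and the triangle inequality, by $C\,\omega(\|(u_1,h_1)-(u_2,h_2)\|_E)\,\|(\bar u,\bar h)\|_E$ for a modulus $\omega\to0$. The main obstacle is purely bookkeeping: one must be careful that every nonlinear quantity is split so that exactly one factor carries the "large" $t\to T^-$ growth (absorbed by the $\rho_3\to\infty$ weight appearing in the second slot of $E$ and in \eqref{est-ad}) while the remaining scalar factor $\int_0^1 u$ is tamed by Lemma \ref{supremo}; getting the weight exponents to match ($\rho_1^2\rho_3^{-2}\le C\tau^4$ and $\rho_3^2 m^{-2}\le C\rho_1^2$ are the crucial algebraic inequalities, already recorded in the proof of Lemma \ref{supremo}) is where all the care goes.
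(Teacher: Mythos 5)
Your proposal is correct and follows essentially the same route as the paper: both arguments rest on the boundedness of $\ell$ and $\ell'$, the weighted bound $\|(x^\alpha u_x)_x\|_{\rho_3^2}\leq C\|(u,h)\|_E$, Lemma \ref{supremo}, and the weight identity $\rho_1^2=\zeta^4\rho_3^2\leq C\tau^4\rho_3^2$ to absorb the scalar factor $\int_0^1 u$. The only organizational difference is that the paper computes the Gateaux derivative and then proves continuity of $(u,h)\mapsto H_1'(u,h)$ (deferring the dominated-convergence details to a citation), whereas you isolate the nonlinear part and verify Fr\'echet differentiability directly with an explicit $o(\|(\bar u,\bar h)\|_E)$ remainder estimate --- a slightly more self-contained but equivalent presentation.
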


\begin{proof}
    It is clear that $H_2\in C^1$. We will prove that $H_1$ has a continuous Gateaux derivative on $E$. In fact, some well-known calculation allows us to see that the Gateaux derivative of $H_1$ at $(u,h)\in E$ is given by 
	\[H_{1}^{\prime} (u,h) (\bu,\bh):=\bu_t-\ell'\left(\into u\dd x\right)\into\bu \dd x\, (x^\alpha u_x)_x-\ell\left(\into u\dd x \right)(x^\alpha \bu_x)_x-\bh\chi_\omega,
	\]
	for each $(\bu,\bh)\in E$. We just need to prove that the Gateaux derivative $H'_1:E\to \mathcal{L}(E;L^2(Q;\rhoi{1}^2) )$ is continuous. On this purpose, given  $(u,h)\in E$, let $((u^{n} ,h^{n}) )_{n=1}^{\infty}$ be a sequence in $E$ such that $\| (u^{n} ,h^{n} )-(u,h) \| _{E} \rightarrow 0$. We must prove that $\| H'_1 (u^{n} ,h^{n} )-H'_1 (u,h) \| _{\mathcal L (E;L^2(Q;\rhoi{1}^2))} \rightarrow 0$. In fact, taking $(\bu ,\bh) $ on the unit sphere of $E$, we can see that
	\begin{align*}
	    & \| (H'_1 (u^{n} ,h^{n} ) - H^{\prime}_1 (u,h) )(\bu ,\bh)\| _{\rho_{1}^{2}}^{2}\\
	    &\begin{multlined}[t]
	    =\intq \rhoi{1}^2\bigg| -\ell'\left(\into u^n\dd x\right)\into\bu \dd x\, (x^\alpha u^n_x)_x-\ell\left(\into u^n\dd x \right)(x^\alpha \bu_x)_x   \dd x \dd t\\+\ell'\left(\into u\dd x\right)\into\bu \dd x\, (x^\alpha u_x)_x+\ell\left(\into u\dd x \right)(x^\alpha \bu_x)_x  \dd x \dd t\bigg|^2
	    \end{multlined}\\
	    & \begin{multlined}[t]
	    \leq C\intq \rhoi{1}^2\left(\into\bu \dd x\right)^2\left(\ell'\left(\into u^n\dd x\right)\right)^2 |(x^\alpha (u^n_x-u_x))_x |^2\dd x \dd t\\
	    +C\intq \rhoi{1}^2\left(\into\bu \dd x\right)^2\left(\ell'\left(\into u^n\dd x\right)-\ell'\left(\into u \dd x\right)\right)^2 |(x^\alpha u^n_x)_x |^2\dd x \dd t\\
	    +C\intq \rhoi{1}^2\left(\into\bu \dd x\right)^2\left(\ell\left(\into u^n\dd x\right)-\ell\left(\into u \dd x\right)\right)^2 |(x^\alpha \bu_x)_x |^2\dd x \dd t.
	    \end{multlined}
	\end{align*}
Proceeding as in \cite{jrl2020EECT}, using that $\ell \in C^1 (\R,\R)$ has bounded derivatives and applying Lebesgue's dominated convergence theorem, we can prove that each of these three last integral converges to zero, as $n\to +\infty$. Hence, $H_{1}^{\prime}$ is continuous, as desired.\end{proof}

\begin{proof}[Proof of Theorem $\ref{main1}$] 
		
		We already know that the mapping $H:E \longrightarrow F$ is well defined and belongs to $C^1(E,F)$ (Propositions $\ref{lema3.3}$ and $\ref{lema3.4}$). We state that $H^{\prime} (0,0)\in \mathcal L (E,F)$ is onto. In fact,  given $(g,u_0) \in F=L^2(Q;\rhoi{1}^2)\times H_\alpha^1$, we apply Proposition $\ref{linearcontrol}$ in order to obtain $(u,h)\in L^2(Q;\rhoi{1}^2)\times L^2(\domw;\rhoi{3}^2)$ which solves \eqref{pb-lin2} and satisfies \eqref{est-ad}. It means that $(u,h)\in E$ and $H^{\prime}(0,0)(u,h)=(g,u_0)$, as desired.
		
		Hence, by \textit{Lyusternik's Inverse Mapping Theorem} (Theorem \ref{lyusternik}) , there exist $\varepsilon>0$ and a mapping $\tilde{H} :B_{\varepsilon} (0) \subset L^2(Q;\rhoi{1}^2)\times H_\alpha^1 \longrightarrow E$ such that
		\[
		\displaystyle H(\tilde{H} (y)) = y \emph{  for each  } y\in B_{\varepsilon}(0)\subset L^2(Q;\rhoi{1}^2)\times H_\alpha^1 .
		\]
		In particular, if $\bar{u}_0\in H_{\alpha}^{1}$ and $\| \bar{u}_0\|_{H_{\alpha}^{1}} < \varepsilon$, we conclude that $(\bar{u},\bar{h})=\tilde{H} (0,\bar{u}_0)\in E$ solves  $H(\bar{u},\bar{h}) = (0,\bar{u}_0)$. Finally, since $\bar{u}\in L^2(Q;\rhoi{1}^2)$, we get $\bar{u}(x,T)=0$ almost everywhere in $[0,1]$ (see Remark $\ref{gotozero}$). It completes the proof.
	\end{proof}

\bibliography{references}

\end{document}